\documentclass{aims}

\usepackage[dvipsnames]{xcolor}
\usepackage{lscape}
\usepackage{amssymb}           
\usepackage[numbers]{natbib}   
\usepackage{paralist}
\usepackage[misc]{ifsym}       
\usepackage{epsfig}
\usepackage{epstopdf}
\usepackage{algpseudocode}
\usepackage[shortlabels]{enumitem}
\usepackage[algo2e,linesnumbered,vlined,ruled]{algorithm2e}
\usepackage{multirow}
\usepackage{siunitx}
\usepackage{graphicx}
\usepackage{cancel}
\usepackage{subfig}
\usepackage{upgreek}
\usepackage{tikz}
\usepackage{tkz-euclide}
\usepackage[colorlinks=true]{hyperref}
\usepackage{xurl}
\usepackage{cleveref}
\usepackage{float}
\usepackage{placeins}
\usepackage{array}     
\usepackage{rotating}
\usepackage{pdflscape}

\usepackage{booktabs}  
\usepackage{ulem}      
\hypersetup{urlcolor=blue, citecolor=red}

\definecolor{honopiilani}{RGB}{82,81,245}
\definecolor{front}{RGB}{21,249,78}
\definecolor{wainee}{RGB}{249,236,21}
\definecolor{minor}{RGB}{178,0,255}
\definecolor{west_source}{RGB}{244,0,111}
\definecolor{east_source}{RGB}{255,114,178}
\definecolor{bypass}{RGB}{179,15,15}
\definecolor{keawe}{RGB}{255,51,51}
\definecolor{ll}{RGB}{244,127,57}
\definecolor{dirt}{RGB}{100,63,19}

\allowdisplaybreaks

\newtheorem{theorem}{Theorem}[section]

\newtheorem{proposition}[theorem]{Proposition}

\theoremstyle{definition}              

\newcommand{\ml}{\left(}
\newcommand{\mr}{\right)}

\newcommand{\mlm}{\left|}
\newcommand{\mrm}{\right|}

\newcommand{\D}{\mathrm{d}}

\newcommand{\p}{\partial}

\DeclareRobustCommand{\okina}{%
  \raisebox{\dimexpr\fontcharht\font`A-\height}{%
    \scalebox{0.8}{`}%
  }%
}

\def\currentvolume{X}
\def\currentissue{X}
\def\currentyear{200X}
\def\currentmonth{XX}
\def\ppages{X-XX}
\def\DOI{10.3934/xx.xxxxxxx}

\title[Traffic flow network modeling for wildfire evacuation]
{Macroscopic traffic flow network modeling for wildfire evacuation: A game-theoretic junction optimization approach with application to the Lahaina fire}

\author[A. Lu, H. K. Tan, A. Xue, A. Koniges and A. Bertozzi]{}

\subjclass{Primary: 90B20, 35L65; Secondary: 90C26, 90C35, 65M08, 91A80.}

\keywords{System of scalar conservation laws on networks, game-theoretic junction optimization, macroscopic traffic flow, wildfire evacuation, phase transitions.}

\thanks{This work was supported by NSF grants CCF-2345255 and CCF-2345256.  The authors used AI tools (Claude and ChatGPT) to help with code development and suggestions for some of the English exposition.}

\thanks{$^*$Corresponding author: Hong Kiat Tan}

\begin{document}
\maketitle

\centerline{\scshape
Annie Lu$^{{\href{mailto:annieclu@g.ucla.edu}{{\textrm{\Letter}}}}\dagger 1}$,
Hong Kiat Tan$^{{\href{mailto:maxtanhk@g.ucla.edu}{{\textrm{\Letter}}}}*\dagger 1}$,
Alex Xue$^{{\href{mailto:alexxue@g.ucla.edu}{{\textrm{\Letter}}}}\dagger 1}$,
Alice Koniges$^{{\href{mailto:koniges@hawaii.edu}{{\textrm{\Letter}}}}2}$
and Andrea  L. Bertozzi$^{{\href{mailto:bertozzi@g.ucla.edu}{{\textrm{\Letter}}}}1,3}$}

\medskip

{\footnotesize
\centerline{$^\dagger$These authors contributed equally to this work.}
}

\medskip

{\footnotesize
\centerline{$^1$Department of Mathematics, University of California, Los Angeles, United States}
}

\medskip

{\footnotesize
\centerline{$^2$Department of Information and Computer Sciences, University of Hawai\okina i at M\=anoa, United States}
}

\medskip



{\footnotesize
\centerline{$^3$Department of Mechanical and Aerospace Engineering, University of California, Los Angeles, United States}
}

\bigskip

\centerline{(Communicated by Handling Editor)}


\begin{abstract}
The 2023 Lahaina wildfire killed 102 people on a peninsula served by a single two-lane highway, making exit lane capacity the binding constraint on evacuation time. We model the evacuation as a system of hyperbolic scalar conservation laws on a directed graph with game-theoretic junction conditions that maximize total network flux, an evacuation-calibrated piecewise linear-quadratic flux function, and a loss-driven optimization framework that tunes traffic distribution toward priority corridors. Analytical results on a toy network and numerical simulations of the Lahaina road network reveal a phase transition in exit lane capacity. Additional lanes improve throughput linearly until a computable critical threshold, beyond which no route optimization yields further benefit. For Lahaina, reversing one southbound lane captures nearly all achievable improvement, and a fourth lane can be reserved for emergency vehicles with negligible impact on civilian clearance time. These results provide a rigorous mathematical basis for contraflow recommendations in wildland-urban interface evacuations.
\end{abstract}


\section{Introduction}

Wildfires have become an increasingly severe threat to communities worldwide, with climate change intensifying both their frequency and destructive potential \citep{Jolly2015Climate}. Wildland-urban interface communities, where developed areas meet vegetation, are particularly vulnerable to these disasters. The tragic August 8-9, 2023, Lahaina fire in Maui, Hawai'i exemplifies the devastating consequences when wildfire evacuation systems fail. The fire claimed 102 lives and destroyed over 2,200 structures \citep{lahainafirereport}, making it the deadliest U.S. wildfire in over a century. Traffic congestion during the evacuation contributed significantly to this tragedy, with reports indicating vehicles becoming trapped on gridlocked roads as the fire advanced \citep{lahainafirereport}. Such events underscore the critical need for sophisticated traffic flow models that can inform evacuation planning and real-time emergency management in wildland-urban interface communities.

The modeling of traffic flow on networks has a rich mathematical foundation dating back to the seminal work of Lighthill, Whitham, and Richards \cite{LighthillWhitham1955Traffic,Richards1956Shock}, who formulated traffic flow as a macroscopic continuum governed by hyperbolic conservation laws. The extension to networks was pioneered by Holden and Risebro \cite{PDETraffic1}, and Coclite et al. \cite{PDETraffic2} later streamlined this framework via preference matrices at junctions. Furthermore, at congested junctions, models such as the classical FIFO (first-in-first-out) node models scale the incoming flux of vehicles proportional to the maximum capacities of the outgoing roads while respecting the drivers' preference. As illustrated in the four-leg scenario of Tampère et al. \cite{Tampere2011GenericNodeModels}, this leaves capacity on an alternate exit unused once one of the outgoing roads saturates. Such behavior then motivates an optimization-based junction treatment for evacuation settings where any spare capacity is critical.

Addressing the gap between theoretical network models and real-world evacuation dynamics is challenging because macroscopic traffic models calibrated to routine conditions struggle to capture evacuation behavior. Recent empirical studies of the 2019 Kincade Fire \citep{kincade_2019} and 2020 Glass Fire \citep{glass_2020} reveal that evacuation traffic exhibits fundamentally different characteristics from normal traffic patterns. Speeds decrease by approximately 3.5 km/h across all density ranges, and maximum flow rates drop by 2--5\%. These values provide empirically validated baselines, though their applicability varies by network topology. In bottleneck-dominated regimes such as the Lahaina network, where the system quickly saturates, capacity constraints rather than these behavioral parameters govern evacuation outcomes. These behavioral changes, driven by reduced visibility from smoke  \citep{INTINI2022103211}, increased caution to avoid accidents, unfamiliarity with evacuation routes, and vehicles loaded with belongings, necessitate evacuation-specific traffic models. Moreover, wildfire scenarios introduce dynamic network topology as roads become blocked by fire, debris, or emergency operations, requiring models that can handle time-varying network structures. Notably, this bottleneck-dominated behavior is not unique to Lahaina. Many wildland-urban interface communities, including Topanga Canyon \cite{cova2013mapping} and the Woolsey fire corridor \citep{lacounty2019woolsey}, share similar funnel-to-arterial road topologies with small exit cut-sets \citep{cova1997modelling, cova2013mapping}, 
making the framework directly transferable by substituting the local graph 
and recalibrating the loss function weights. \\

This paper makes four contributions.
\begin{enumerate}[(i)]
\item \textbf{Game-theoretic junction conditions (Section \ref{section:math}).} We formulate junction dynamics as an optimization problem, where drivers maximize throughput subject to entropy conditions, allowing dynamic adjustment of route preferences during congestion. The resulting solver reallocates flux towards under-utilized legs while preserving a unique solution (Theorem \ref{opt-sol}), providing the evacuation-oriented alternative to FIFO and capacity-proportional rules \citep{Tampere2011GenericNodeModels, Jin2003Distribution}. The junction resolution explicitly handles three regimes based on capacity ratios, automatically determining when drivers should deviate from preset preferences to maximize evacuation efficiency.

\item \textbf{Phase transition in exit lane capacity (Section \ref{section:results}).} We identify a sharp transition separating an \emph{exit-capacity-limited} regime from a \emph{supply-limited} regime. The critical exit lane threshold is computable in closed form from the flux ratios of incoming and outgoing roads (Proposition \ref{closedform-CD}). Below this threshold, adding exit lanes increases throughput linearly; beyond it, route optimization provides no marginal benefit. This dichotomy explains why, in bottleneck-dominated networks, infrastructure changes dominate any traffic management strategy.

\item \textbf{Evacuation-calibrated flux functions (Section \ref{section:modeling}).} We construct a piecewise linear-quadratic fundamental diagram informed by wildfire observations \citep{kincade_2019} and the measured capacities of the Lahaina road network, capturing free-flow and congested regimes with initial conditions and scale parameters that reflect evacuation traffic volumes while remaining analytically compatible with our PDE framework.

\item \textbf{Computational methodology (Sections \ref{section:optimization}--\ref{section:results}).} We develop an end-to-end computational pipeline comprising (a) a stochastic block coordinate descent scheme that tunes junction distribution parameters via a loss function prioritizing critical corridors, (b) a Godunov solver with supply--demand junction coupling, and (c) a full case study of the August 8--9, 2023 Lahaina evacuation \citep{lahainafirereport} including scenario analyses and contraflow lane reversal recommendations. The source code is publicly available at \url{https://github.com/alexxue99/Traffic-Simulation}.
\end{enumerate}

The following diagram summarizes our end-to-end pipeline contributions, from modeling and optimization through full numerical simulations of the Lahaina network.

\begin{figure}[htbp]
    \centering
\includegraphics[scale=0.3]{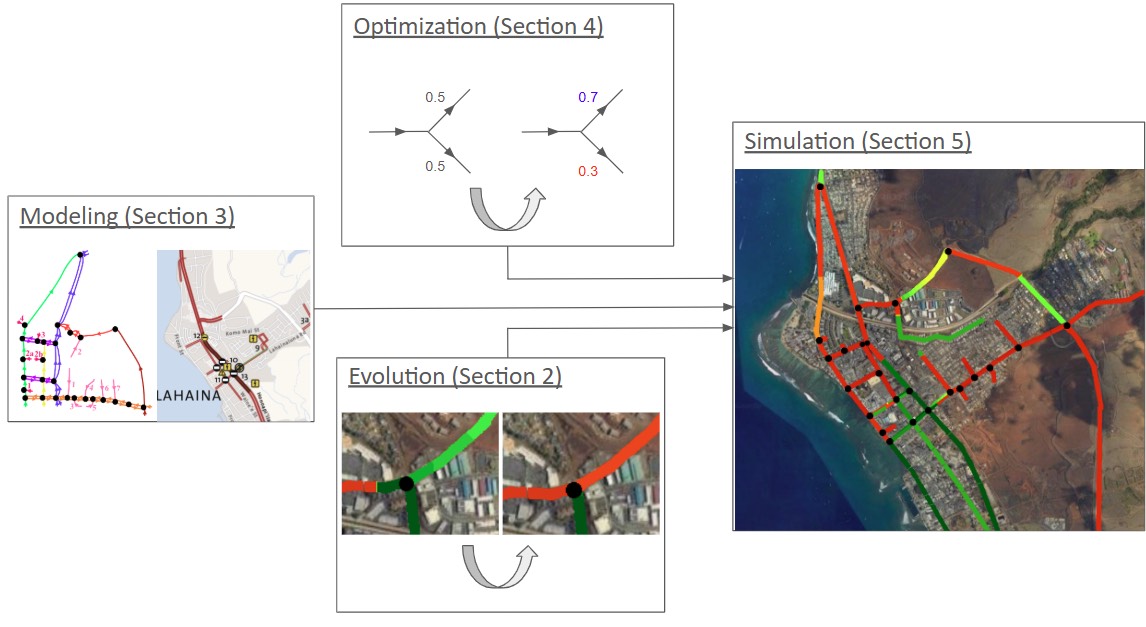}
\caption{Overview of the paper's pipeline, from network modeling and traffic evolution to optimization and simulation of the Lahaina network.}
\label{fig:workflow}
\end{figure}

Applied to the 2023 Lahaina wildfire, our framework yields the following actionable findings.
\begin{itemize}
\item Optimizing driver preferences to favor exit-facing roads improves evacuation throughput when the network is not yet saturated (Sections \ref{sec:phase-1}--\ref{sec:phase-2}).
\item Once roads are gridlocked, route optimization provides diminishing returns. The binding constraint is road capacity, not driver behavior.
\item Reversing one southbound lane on Honoapi`ilani Highway (creating 3 total northbound exit lanes) captures nearly all achievable improvement; a fourth lane yields no further benefit and can be reserved for emergency vehicles (Section \ref{sec:phase-4}).
\item Opening southbound exits in the afternoon was critical for clearing residents, suggesting that future plans should prioritize having multiple exit directions over optimizing a single route (Section \ref{sec:phase-3}).
\end{itemize}

The remainder of this paper is organized as follows. Section \ref{section:math} develops the mathematical framework for traffic flow on networks, bringing together the classical Lighthill-Whitham-Richards traffic model, our game-theoretic junction conditions, and the numerical scheme. Section \ref{section:modeling} calibrates the flux functions and initial conditions using both routine Lahaina traffic data and wildfire-era observations. Section \ref{section:optimization} then introduces the network optimization framework based on nonsmooth stochastic block coordinate descent. Section \ref{section:results} assembles these ingredients in numerical experiments on a toy network and the Lahaina evacuation, identifying the phase transition in exit lane capacity and examining how alternative weights, preference updates, and capacity assumptions alter system performance. Section \ref{section:conclusion} concludes with discussion and future directions.

\section{Mathematics of Continuum Models for Traffic Flow on a Network}\label{section:math}

We model traffic propagation on a road network as a collection of scalar conservation laws posed on directed road segments and coupled through junction conditions. To establish notation, we review the single-road Lighthill-Whitham-Richards (LWR) model before turning to its network extension and junction entropy conditions.

\subsection{Lighthill-Whitham-Richards (LWR) Model for Traffic Flow on a Road}

The Lighthill-Whitham-Richards (LWR) model \citep{LighthillWhitham1955Traffic, Richards1956Shock} describes traffic via a scalar conservation law for the vehicle density $\rho(t,x)$.
\begin{equation}\label{MathCont1}
\frac{\p\rho}{\p t}(t,x) + \frac{\p f}{\p x}(t,x) = 0.
\end{equation}
The flux depends on density alone, $f(\rho) = u(\rho)\rho$, giving
\begin{equation}\label{MathCont2}
f(\rho) = u(\rho)\rho,
\end{equation}
which rewrites \eqref{MathCont1} as
\begin{equation}\label{MathCont3}
\frac{\p}{\p t}\rho (t,x) + \frac{\p}{\p x}f(\rho(t,x)) = 0.
\end{equation}
The flux is assumed strictly concave with $f(0)=f(\rho_{\text{max}})=0$, where $\rho_{\text{max}}>0$ is the jam density. Assuming a linear speed--density relation gives the standard quadratic (Greenshields) flux
\begin{equation}\label{MathCont4}
f(\rho) = u_{\text{max}}\rho \ml 1 - \frac{\rho}{\rho_{\text{max}}} \mr,
\end{equation}
where $u_{\text{max}}$ is the free-flow speed. Solutions to \eqref{MathCont3} generically develop discontinuities, necessitating weak solutions with entropy conditions; see, e.g., \citep{Evans, LeVeque}.

\subsection{Traffic Flow on a Network}

Next, we will extend the model from a single road to multiple roads on a traffic flow network. A continuum model for such a network was first introduced in a seminal paper written by Holden and Risebro \cite{PDETraffic1} and was later simplified tremendously by Coclite et al. \cite{PDETraffic2} with the introduction of a preference matrix at each road junction. The model as described in Coclite et al. \cite{PDETraffic2} serves as the mathematical basis for modeling traffic flow in Lahaina.

\subsubsection{Definitions and Terminologies}

We model the road network as a finite, connected directed graph (possibly with edges extending to infinity), where directed edges represent roads and vertices represent junctions (Figure~\ref{Fig1}).

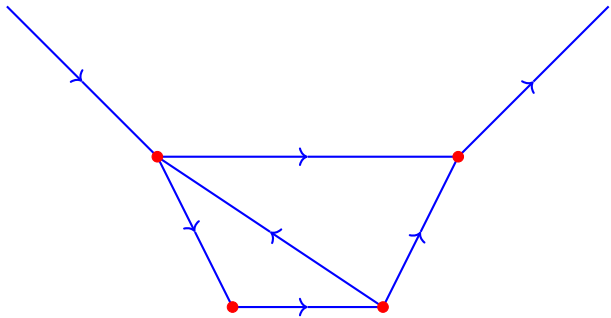
\begin{figure}
\centering
\begin{tikzpicture}[scale=1]
\begin{scope}
\draw[->,thick,blue] (0,4) to (1,3); \draw[thick,blue] (1,3) to (2,2);
\draw[->,thick,blue] (2,2) to (4,2); \draw[thick,blue] (4,2) to (6,2);
\draw[->,thick,blue] (6,2) to (7,3); \draw[thick,blue] (7,3) to (8,4);
\draw[->,thick,blue] (2,2) to (2.5,1); \draw[thick,blue] (2.5,1) to (3,0);
\draw[->,thick,blue] (5,0) to (5.5,1); \draw[thick,blue] (5.5,1) to (6,2);
\draw[->,thick,blue] (3,0) to (4,0); \draw[thick,blue] (4,0) to (5,0);
\draw[->,thick,blue] (5,0) to (3.5,1); \draw[thick,blue] (3.5,1) to (2,2);
\draw[red,fill=red] (2,2) circle (2pt);
\draw[red,fill=red] (6,2) circle (2pt);
\draw[red,fill=red] (3,0) circle (2pt);
\draw[red,fill=red] (5,0) circle (2pt);
\end{scope}
\end{tikzpicture}
\caption{An example of a traffic network of interest. Here, the scalar conservation law on each road is solved away from the junction, indicated by the {\color{blue} blue directed edges}. On the other hand, junctions are labelled with {\color{red}red vertices}. Determining the density of cars at the junction would require resolving the junction conditions.}
\label{Fig1}
\end{figure}

Suppose there are $N$ roads and $M$ junctions. On road $i$, the density $\rho_i(t,x)$ with $x\in[a_i,b_i]$ (traffic flows from $a_i$ to $b_i$) satisfies
\begin{equation}\label{PDE1}
\p_t \rho_i + f_i(\rho_i)_x = 0 \quad \text{ in } (t,x) \in (0,\infty) \times [a_i,b_i], \text{ for each } i = 1,\cdots, N.
\end{equation}
with flux $f_i:[0,\rho_{\text{max},i}]\to\mathbb{R}$ satisfying
\begin{itemize}[leftmargin = 20pt]
\item[$(\mathcal{F})$] For each $i = 1,\cdots, N$,  $f_i$ is Lipschitz continuous and strictly concave function with $f(0) = f(\rho_{\text{max},i}) = 0$.
\end{itemize}
Let $\sigma_i$ be the unique maximizer of $f_i$. Condition $(\mathcal{F})$ implies the existence of a symmetrizing function $\tau_i:[0,\rho_{\text{max},i}] \rightarrow [0,\rho_{\text{max},i}]$ satisfying
\begin{itemize}[leftmargin=*]
\item $f_i(\tau_i(\rho_i)) = f_i(\rho_i)$ for each $\rho_i \in [0,\rho_{\text{max},i}]$, and
\item $\tau_i(\rho_i) \neq \rho_i$ for each $\rho_i \in [0,\rho_{\text{max},i}] \setminus \{\sigma_i\}$.
\end{itemize}
A \textbf{weak solution} to \eqref{PDE1} is defined in the standard distributional sense. Each $\rho_i$ satisfies the conservation law in the sense of distributions on its road segment,
\begin{equation}\label{PDE2}
\int_0^\infty \int_{a_i}^{b_i} \ml \rho_i \frac{\p \varphi}{\p t} + f_i(\rho_i)\frac{\p \varphi}{\p x}\mr (t,x) \D x \D t = 0
\end{equation}
for all $\varphi \in C^\infty_c((0,\infty)\times[a_i,b_i])$, with an analogous distributional identity coupling the roads at each junction.
\begin{equation}\label{PDE3}
\sum_{i=1}^{n+m} \int_0^\infty \int_{a_i}^{b_i} \ml \rho_i \frac{\p \varphi_i}{\p t} + f_i(\rho_i)\frac{\p \varphi_i}{\p x}\mr (t,x) \D x \D t = 0.
\end{equation}
Here we index the $n$ incoming and $m$ outgoing roads at a junction $J_j$ by $i=1,\dots,n$ and $i=n{+}1,\dots,n{+}m$, respectively, and require the test functions $\varphi_i$ to be smooth across the junction. As in the single-road case \citep{Evans, LeVeque}, the Rankine-Hugoniot conditions yield the \textbf{Kirchhoff law} at each junction. For a.e.\ $t>0$,
\begin{equation}\label{PDE4}
\sum_{i=1}^n f_i(\rho_i(t,b_i)) = \sum_{i=n+1}^{n+m} f_i(\rho_i(t,a_i)).
\end{equation}

Since a junction with $n$ incoming and $m$ outgoing roads has $n+m$ unknown boundary flux values but only the single conservation constraint \eqref{PDE4}, the system is underdetermined. Determining the flux values at the junction uniquely is essential for obtaining a unique solution to the Cauchy problem for \eqref{PDE1}. This motivates imposing additional entropy and junction conditions, which we describe next.

\subsubsection{Entropy and Junction Conditions}

Next, we will extend the notion of a weak solution to a \textbf{weak entropic solution} by demanding that the weak solution to the system satisfies additional conditions, which we denote as the \textbf{entropy conditions}  $(\mathcal{E})$. This is done in a manner that generalizes the formulation for Riemann problems as in \citep{PDETraffic2}.
Fix any $t > 0$ and a junction with $n$ incoming and $m$ outgoing roads, and consider the following notations:
\begin{itemize}[leftmargin=*]
\item $\rho_{i,0}$ denote the density of cars right before the junction (that is, $\rho_{i,0} = \rho_i(t,b_i^-)$ along incoming roads $i = 1,\cdots, n$ and $\rho_{i,0} = \rho_i(t,a_i^+)$ along outgoing roads $i=1,\cdots,m$),
\item $\gamma_{i,0} := f_i(\rho_{i,0})$ be the incoming/outgoing flux along the respective incoming and outgoing roads, and
\item $\hat{\gamma}_i$ be the assigned fluxes and $\hat{\rho}_i$ be the corresponding density upon resolving the relevant entropy and junction conditions at each junction along the respective roads.
\end{itemize}

In the notation above, we can express the Kirchhoff Law in \eqref{PDE4} as 
\begin{equation}\label{PDE17}
\sum_{i=1}^n \hat{\gamma}_j= \sum_{j=n+1}^{n+m} \hat{\gamma}_i.
\end{equation}

\begin{enumerate}[leftmargin = *]
\item Along each road $i$, the typical entropy conditions for a scalar conservation law holds. Mathematically, for every $k \in \mathbb{R}$ and $\tilde{\varphi} \in C^\infty_c([0,\infty) \times [a_i,b_i])$ and $\tilde{\varphi} > 0$, we have
$$\int_0^\infty \int_{a_i}^{b_i} \ml |\rho_i - k| \frac{\p \tilde{\varphi}}{\p t} + \text{sgn}(\rho_i - k)(f_i(\rho_i)-f_i(k))\frac{\p \tilde{\varphi}}{\p x}\mr (t,x) \D x \D t \geq 0.$$
If $f$ is not differentiable at $\sigma$, by the entropy conditions, we take the derivative from the left - that is - $f'(\sigma) := f'(\sigma^-).$
\item Waves along incoming roads must have negative velocity and waves along outgoing roads must have positive velocity. This is physically consistent with the fact that information about a traffic jam (or clearance) at the junction would propagate backward from the junction along incoming roads and forward from the junction along outgoing roads. This thus restricts the corresponding choices of the flux functions and densities at the junction along each road. For each junction $J_j$ for $j = 1,\cdots, M$, with $n$ incoming roads and $m$ outgoing roads, we have for each $i = 1,\cdots, n$ (along incoming roads),
\begin{equation}\label{PDE5}
\hat{\rho_i} \in \begin{cases}
\{\rho_{i,0}\} \cup \left( \tau_i(\rho_{i,0}),\rho_{\text{max},i}\right] &\text{ if } \rho_{i,0} \in [0,\sigma_i], \\
[\sigma_i,\rho_{\text{max},i}] &\text{ if } \rho_{i,0} \in [\sigma_i,\rho_{\text{max},i}], \\
\end{cases}
\end{equation}
and for each $j = n+1,\cdots, n+m$ (along outgoing roads), we have
\begin{equation}\label{PDE6}
\hat{\rho_j} \in \begin{cases}
[0,\sigma_j] &\text{ if } \rho_{j,0} \in [0,\sigma_j], \\
[0,\tau_j(\rho_{j,0})) \cup \{\rho_{j,0}\} &\text{ if } \rho_{j,0} \in [\sigma_j,\rho_{\text{max},j}]. \\
\end{cases}
\end{equation}
Equivalently, for the assigned fluxes at the junctions, we have for each $i = 1,\cdots, n$ (along incoming roads), 
\begin{equation}\label{PDE7}
\hat{\gamma}_i \in \begin{cases}
[0,\gamma_{i,0}] &\text{ if } \rho_{i,0} \in [0,\sigma_i], \\
[0,f_i(\sigma_i)] &\text{ if } \rho_{i,0} \in [\sigma_i,\rho_{\text{max},i}], \\
\end{cases}
\end{equation}
and for each $j = n+1,\cdots, n+m$ (along outgoing roads), we have
\begin{equation}\label{PDE8}
\hat{\gamma}_j \in \begin{cases}
[0,f_j(\sigma_j)] &\text{ if } \rho_{j,0} \in [0,\sigma_j], \\
[0,\gamma_{j,0}] &\text{ if } \rho_{j,0} \in [\sigma_j,\rho_{\text{max},j}]. \\
\end{cases}
\end{equation}
For notational convenience, we denote $\hat{\gamma}_i \in [0,c_i]$ for each $i = 1,\cdots,n+m$ with the flux capacity $c_i$ to be either $\gamma_{i,0}$ or $f_i(\sigma_i)$ depending on the scenarios as described in \eqref{PDE7} and \eqref{PDE8}, and define
$$\Omega := \prod_{i=1}^{n+m}[0,c_i]$$
to be the admissible set representing the admissible fluxes at the junction along each road.
\item Conditions at the junction. Here, we assume that the following happens at the junction:
\begin{itemize}
\item[(A)] There are some \textbf{prescribed preferences of drivers}, that is, the traffic from incoming roads is distributed onto outgoing roads according to fixed coefficients. Mathematically, for each junction $J_j$ for $j = 1,\cdots, M$ with $n$ incoming and $m$ outgoing roads, we fix a traffic distribution matrix $A = \{\alpha_{ji}\}_{j=n+1,\cdots,n+m;i = 1,\cdots,n}\in \mathcal{M}_{m \times n}(\mathbb{R})$ such that
\begin{equation}\label{PDE9}
0 < \alpha_{ji} < 1, \quad \text{ and } \quad \sum_{j=n+1}^{n+m} \alpha_{ji} = 1.
\end{equation}
This implies that
\begin{equation}\label{PDE11}
\begin{pmatrix}
\hat{\gamma}_{n+1} \\ \vdots \\ \hat{\gamma}_{n+m} 
\end{pmatrix}
= A \begin{pmatrix}\hat{\gamma}_1 \\ \vdots \\ \hat{\gamma}_n \end{pmatrix}.
\end{equation}
Furthermore, the matrix $A$ must have the property that the row vector $(1,\cdots,1) \in \mathbb{R}^n$ is not in the span of any $n-1$ combinations of either rows of $A$ or standard basis elements in $\mathbb{R}^n$. The reader can easily verify that imposing \eqref{PDE11} implies that \eqref{PDE17} is automatically satisfied. For more information on this technical condition, refer to \citep{PDETraffic2}. 
\item[(B)] There is a \textbf{right of way} to the traffic at the junction if the junction is packed. In other words, if there is an outgoing road such that the total maximum flux allowed on it is smaller than the total maximum flux supplied by incoming roads (in accordance to the drivers' preferences in \eqref{PDE11}), not all drivers on each of the incoming roads would be able to pass through the junction at the next instance. Mathematically, this is represented by the following vector inequality being true for at least one of the components:
\begin{equation}\label{PDE13}
\begin{pmatrix}
c_{n+1} \\ \vdots \\ c_{n+m} 
\end{pmatrix}
< A \begin{pmatrix}c_1 \\ \vdots \\ c_n \end{pmatrix}.
\end{equation}
In this scenario, the drivers' preferences are ignored, and the right of way is activated instead. We assume that the incoming fluxes and the outgoing fluxes scale proportional to the incoming and outgoing capacities, that is,
\begin{equation}\label{PDE16}
\begin{pmatrix}
\hat{\gamma}_{1} \\ \vdots \\ \hat{\gamma}_{n} 
\end{pmatrix}
= \mu \begin{pmatrix}c_{1} \\ \vdots \\ c_{n} \end{pmatrix} \quad \text{ and } \quad \begin{pmatrix}
\hat{\gamma}_{n+1} \\ \vdots \\ \hat{\gamma}_{n+m} 
\end{pmatrix}
= \lambda \begin{pmatrix}c_{n+1} \\ \vdots \\ c_{n+m} \end{pmatrix}
\end{equation}
for some constants $\mu,\lambda > 0$ to be optimized.
\item[(C)] Respecting (A) and (B), the incoming drivers maximize fluxes at the junction for a given preference matrix $A$. Mathematically, we can express this as a conditional optimization as follows:
\begin{equation}\label{PDE12}
\begin{cases}
\text{If not } \eqref{PDE13}, \quad 
&\begin{aligned} &\underset{(\hat{\gamma}_1,\cdots,\hat{\gamma}_{n+m})}{\text{maximize}} && \,  \sum_{i=1}^n \hat{\gamma}_i\\
&\text{subject to } &&\, (\hat{\gamma}_1,\cdots,\hat{\gamma}_{n+m}) \in \Omega, \text{ and } \eqref{PDE11}.
\end{aligned} \\ 
\text{Else, } \quad
&\begin{aligned} &\underset{(\lambda,\mu)}{\text{maximize}} && \,  \sum_{i=1}^n \hat{\gamma}_i\\
&\text{subject to } &&\, \eqref{PDE17}, \eqref{PDE16}, \text{ and } (\lambda,\mu) \in [0,1]^2.
\end{aligned}
\end{cases}
\end{equation}
\end{itemize}
\end{enumerate}
The solution to the optimization problem is given in the theorem below.

\begin{theorem}\label{opt-sol}
Let 
\begin{equation}\label{opt-sol1}
r_j := c_{n+j}^{-1} A_j \begin{pmatrix} c_1 \\ \vdots \\ c_n \end{pmatrix}
\end{equation}
and hence
\begin{equation}\label{opt-sol2}
r := \max_{1 \le j \le m} r_j,
\end{equation}
where $A_j$ is the $j$th row of $A$, and $c_{\text{in}} = \sum_{i=1}^n c_i$ and $c_{\text{out}} = \sum_{j=n+1}^{n+m} c_j$ to represent the total incoming and outgoing capacities. The unique solution to \eqref{PDE12} is as follows.
\begin{enumerate}[(i)]
\item If $r \leq 1$, \begin{equation}\label{PDE18}
\begin{pmatrix}
\hat{\gamma}_{1} \\ \vdots \\ \hat{\gamma}_{n} 
\end{pmatrix}
=  \begin{pmatrix}c_{1} \\ \vdots \\ c_{n} \end{pmatrix} \quad \text{ and } \quad \begin{pmatrix}
\hat{\gamma}_{n+1} \\ \vdots \\ \hat{\gamma}_{n+m} 
\end{pmatrix}
= A \begin{pmatrix}\hat{\gamma}_1 \\ \vdots \\ \hat{\gamma}_n \end{pmatrix}.
\end{equation}
\item Else if $r > 1$ and $c_{\text{in}} \leq c_{\text{out}}$, we have $\lambda = 1, \mu = \frac{c_{\text{in}}}{c_{\text{out}}},$ and 
\begin{equation}\label{PDE19}
\begin{pmatrix}
\hat{\gamma}_{1} \\ \vdots \\ \hat{\gamma}_{n} 
\end{pmatrix}
=  \begin{pmatrix}c_{1} \\ \vdots \\ c_{n} \end{pmatrix} \quad \text{ and } \quad \begin{pmatrix}
\hat{\gamma}_{n+1} \\ \vdots \\ \hat{\gamma}_{n+m} 
\end{pmatrix}
= \frac{c_{\text{in}}}{c_{\text{out}}} \begin{pmatrix}c_{n+1} \\ \vdots \\ c_{n+m} \end{pmatrix}.
\end{equation}
\item Otherwise with $r > 1$ and $c_{\text{in}} > c_{\text{out}}$,  we have $\lambda = \frac{c_{\text{out}}}{c_{\text{in}}}, \mu = 1,$ and 
\begin{equation}\label{PDE20}
\begin{pmatrix}
\hat{\gamma}_{1} \\ \vdots \\ \hat{\gamma}_{n} 
\end{pmatrix}
=  \frac{c_{\text{out}}}{c_{\text{in}}} \begin{pmatrix}c_{1} \\ \vdots \\ c_{n} \end{pmatrix} \quad \text{ and } \quad \begin{pmatrix}
\hat{\gamma}_{n+1} \\ \vdots \\ \hat{\gamma}_{n+m} 
\end{pmatrix}
= \begin{pmatrix}c_{n+1} \\ \vdots \\ c_{n+m} \end{pmatrix}.
\end{equation}
\end{enumerate}
\end{theorem}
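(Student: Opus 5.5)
We split according to the branch of \eqref{PDE12} that is active. First we observe that condition \eqref{PDE13} holds exactly when some component satisfies $c_{n+j} < A_j(c_1,\dots,c_n)^T$, i.e. $r_j>1$ for some $j$. Hence ``not \eqref{PDE13}'' is equivalent to $r\le 1$, where $r_j$ and $r$ are as in \eqref{opt-sol1}--\eqref{opt-sol2}. This already separates case (i) from cases (ii)--(iii).

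\textbf{Case (i), $r\le 1$.} The constraint \eqref{PDE11} expresses the outgoing fluxes linearly in the incoming ones, so the problem reduces to maximizing $\sum_{i=1}^n\hat\gamma_i$ over $\hat\gamma_i\in[0,c_i]$ subject to $A_j\hat\gamma_{\mathrm{in}}\le c_{n+j}$. The objective is coordinatewise increasing, and its unconstrained maximizer over the box is $\hat\gamma_{\mathrm{in}}=(c_1,\dots,c_n)$. This point is feasible precisely because $r_j\le 1$ for all $j$, which gives $A_j c_{\mathrm{in}}\le c_{n+j}$. Uniqueness follows since any other feasible point has some $\hat\gamma_i<c_i$, and since $\hat\gamma_k\le c_k$ for all $k$, its sum is strictly smaller. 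The outgoing fluxes are then forced by \eqref{PDE11}, giving \eqref{PDE18}.

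\textbf{Cases (ii)--(iii), $r>1$.} Substituting \eqref{PDE16} into \eqref{PDE17} gives $\mu c_{\mathrm{in}}=\lambda c_{\mathrm{out}}$, and the objective becomes $\mu c_{\mathrm{in}}$. This is a one-dimensional linear program: maximize $\mu$ subject to $\mu\in[0,1]$ and $\lambda=\mu c_{\mathrm{in}}/c_{\mathrm{out}}\in[0,1]$, i.e. $\mu\le \min\{1,\,c_{\mathrm{out}}/c_{\mathrm{in}}\}$.
\begin{itemize}
\item If $c_{\mathrm{in}}\le c_{\mathrm{out}}$, the maximum is at $\mu=1$, $\lambda=c_{\mathrm{in}}/c_{\mathrm{out}}$. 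This gives \eqref{PDE19}, whose outgoing fluxes are $(c_{\mathrm{in}}/c_{\mathrm{out}})\,c_{n+j}$; accordingly, the scalar on the outgoing capacities in the statement should be read as $\lambda=c_{\mathrm{in}}/c_{\mathrm{out}}$, with $\mu=1$.
\item If $c_{\mathrm{in}}>c_{\mathrm{out}}$, the maximum is at $\mu=c_{\mathrm{out}}/c_{\mathrm{in}}$, $\lambda=1$. This yields \eqref{PDE20}, whose incoming scalar is $c_{\mathrm{out}}/c_{\mathrm{in}}$, so again the labels of $\lambda$ and $\mu$ should be read accordingly.
\end{itemize}
Uniqueness in both subcases holds because the objective $\mu c_{\mathrm{in}}$ is strictly increasing in $\mu$ (assuming $c_{\mathrm{in}}>0$; the degenerate case of zero capacities is trivial), and $\lambda$ is then determined by $\mu$. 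Finally, we check that these fluxes lie in $\Omega$, which is immediate since $\lambda,\mu\in[0,1]$.

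The main obstacle is not analytic but one of bookkeeping: reconciling the labels $\lambda,\mu$ in the statement with \eqref{PDE16}, where $\mu$ scales the incoming capacities and $\lambda$ the outgoing ones. We would state explicitly which scalar multiplies which capacity vector, and verify that the displayed flux vectors \eqref{PDE19}--\eqref{PDE20} are the ones forced by \eqref{PDE17}.
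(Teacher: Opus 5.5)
Your proof is correct and follows the same route as the paper's appendix proof. In case (i) you saturate the incoming fluxes and use $r\le 1$ for feasibility. In cases (ii)--(iii) you use Kirchhoff's law $\mu c_{\text{in}}=\lambda c_{\text{out}}$ to saturate whichever side is the bottleneck; your one-dimensional LP formulation just makes explicit the optimality and uniqueness that the paper only asserts.

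Your reading of the labels is also the one the paper's own proof uses: $\mu=1$ in (ii) and $\lambda=1$ in (iii). So the swapped $\lambda,\mu$ values in the statement are an inconsistency with \eqref{PDE16}, not a flaw in your argument.
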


\begin{proof}[Proof sketch]
In case~(i), $r\leq 1$ ensures that setting incoming fluxes to their capacities respects all outgoing capacity constraints, so the drivers' preferences can be followed. In cases~(ii) and~(iii), the preferences are infeasible ($r>1$) and the right-of-way regime activates. The Kirchhoff constraint \eqref{PDE17} then uniquely determines the scaling constants $\lambda$ and $\mu$ depending on which side --- incoming or outgoing --- is the bottleneck. See Appendix~\ref{appendix:theorem_proof} for the full proof.
\end{proof}
\noindent In cases (ii) and (iii), observe by comparing \eqref{PDE19} and \eqref{PDE20} with \eqref{PDE11}, the assigned fluxes $(\hat{\gamma}_1,\cdots,\hat{\gamma}_{n+m})$ do not respect the drivers' preferences as in \eqref{PDE18}. This is not surprising given that the drivers have to ``give way" and not follow through with their original preferences to maximize the fluxes in the outgoing road to optimize for flow at the junction. 

Next, we remark that the entropy condition in \eqref{PDE12} has some positive properties. First, having a unique solution implies that there would be a unique assignment of fluxes at the junction on each road connected to the junction, which would support the well-posedness of the numerical and analytic solutions to the conservation law in \eqref{MathCont3} with the right initial data. Furthermore, this condition encourages drivers to change their preferences in the event of a congested junction in an attempt to maximize flux through the junction, which is consistent with most physical scenarios, especially for evacuation purposes. In addition, this condition is vastly different from the first-in first-out (FIFO)/invariance conditions imposed by most papers in the literature, such as in \citep{DelleMonacheGoatinPiccoli2018PriorityRiemann} and \citep{Tampere2011GenericNodeModels}, in which the drivers' preferences are to be respected. Hence, there would be scenarios in which there is a vastly empty outgoing road and a heavily congested outgoing road, but the empty outgoing road would not be utilized as the incoming flux from each road are throttled proportionately to respect drivers' preferences, which in turn limits the flux of cars through the relatively empty outgoing road.

Last but not least, under some additional mild conditions, we end off by quoting the following theorem from Coclite et al. \cite{PDETraffic2} for smooth initial data:
\begin{theorem} 
Let $f_i \equiv f$ for some common flux function $f$, with a common maximum density $\rho_{\text{max}}$. Let $\rho_i(0,x) \in [0,\rho_{\text{max}}]$ for each $x \in [a_i,b_i]$ and for each $i = 1,\cdots, N$ such that $\rho_i(0,x)$ is of bounded variation on $[a_i,b_i]$. If there are equal or more outgoing roads compared to incoming roads, then a unique weak entropic solution to \eqref{PDE1} exists.
\end{theorem}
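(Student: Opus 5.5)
The plan is to follow the wave-front tracking strategy of Coclite et al.~\cite{PDETraffic2}. The junction rule here is the one in \eqref{PDE12}, not theirs. So I first isolate the properties of the junction Riemann solver that their argument uses, and then check them for the rule of Theorem~\ref{opt-sol}.

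\textbf{Step 1: the Riemann problem at a junction.} For constant states $\rho_{i,0}$ near a junction, the capacities $c_i$ are determined by \eqref{PDE7}--\eqref{PDE8}. Theorem~\ref{opt-sol} then gives unique junction fluxes $\hat\gamma_i$.
\begin{itemize}
\item The entropy restrictions \eqref{PDE5}--\eqref{PDE6} then pick unique traces $\hat\rho_i$: on each road, $\hat\gamma_i$ has at most two preimages under $f$, and the restrictions select one.
\item Each road therefore carries a single classical LWR wave, a shock or a rarefaction, with the correct sign of speed: negative on incoming roads, positive on outgoing roads.
\item From the explicit formulas \eqref{PDE18}--\eqref{PDE20} I will record two facts. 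First, the map $(c_1,\dots,c_{n+m})\mapsto\hat\gamma$ is continuous and piecewise Lipschitz. Second, the total through-flux $\Gamma=\sum_{i\le n}\hat\gamma_i$ equals $\min\{c_{\rm in},c_{\rm out}\}$ when $r>1$, and equals $c_{\rm in}$ when $r\le 1$.
\end{itemize}

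\textbf{Step 2: approximate solutions.} Approximate the $BV$ initial data by piecewise constant functions with finitely many jumps. Replace $f$ by a piecewise linear interpolant $f_\nu$ on a grid of mesh $2^{-\nu}$, so that rarefactions become finitely many fronts.
\begin{itemize}
\item Solve the Riemann problems inside each road in the standard way.
\item Solve the Riemann problems at the junctions by Step~1.
\item Track fronts forward in time. I will show that there are finitely many fronts and interactions in finite time, using the usual argument that interactions inside a road do not increase the number of fronts.
\item When a front hits a junction, the junction emits at most one new front per road. The key claim is that such interactions strictly decrease a suitable counter, or change $\Gamma$ by a quantized amount.
\end{itemize}

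\textbf{Step 3: uniform $BV$ bound.} This is the main obstacle. The total variation of the density can blow up at junctions, so I will work with the total variation of the flux, $\mathrm{TV}(f(\rho))$, summed over all roads. This is what \cite{PDETraffic2} does.
\begin{itemize}
\item Inside a road, the flux variation does not increase.
\item At a junction, the plan is to prove an estimate of the form $\Delta\,\mathrm{TV}(f)\le C\,|\Delta\Gamma|$ for a front arriving from one road.
\item It remains to show that $\Gamma$ has bounded variation in time. The hypothesis $n\le m$ enters here, together with the common flux $f$. It should make the number of times $\Gamma$ can increase through a junction controlled, since incoming fronts can only push $\Gamma$ toward $\min\{c_{\rm in},c_{\rm out}\}$. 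It should also control the outgoing fronts generated by the right-of-way regimes (ii)--(iii).
\item The delicate cases are the switches between regimes (i), (ii) and (iii) as $r$ crosses $1$. There the formula for $\hat\gamma$ changes. I will handle them by proving that $\hat\gamma$ is continuous across $r=1$, and by bounding the jump in outgoing fluxes by the jump in $\Gamma$.
\end{itemize}
With a uniform bound on $\mathrm{TV}(f(\rho^\nu))$, I then argue as follows:
\begin{itemize}
\item I obtain $L^1$-Lipschitz continuity in time of $f(\rho^\nu)$.
\item Helly's theorem then gives a subsequence along which $f(\rho^\nu)$ converges.
\item Since $f$ is strictly concave, I recover convergence of $\rho^\nu$ away from a negligible set.
\item Passing to the limit in \eqref{PDE2}--\eqref{PDE3}, in the Kružkov inequalities, and in the junction conditions gives a weak entropic solution.
\end{itemize}

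\textbf{Step 4: uniqueness.} I would try to prove an $L^1$-stability estimate between wave-front tracking approximations, in the spirit of the Garavello--Piccoli semigroup construction. This uses a weighted $L^1$ functional on the network that is non-increasing across junction interactions, which the Lipschitz dependence from Step~1 should make possible. Uniqueness then follows among solutions obtained as limits of front tracking, or among solutions satisfying a Temple-type characterization. I expect this step to need the most extra hypotheses, the ``mild conditions'' mentioned before the statement. If the full stability estimate fails for the regime switches, I would fall back on proving uniqueness only within the class of front-tracking limits.
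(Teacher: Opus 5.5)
The paper gives no proof of its own. It quotes the front-tracking result of Coclite et al.\ and remarks that the argument carries over because Theorem~\ref{opt-sol} determines the junction fluxes uniquely. Your plan makes that route explicit, and you are right that uniqueness of $\hat\gamma$ is not what front tracking actually uses. But the properties you propose to verify fail for this junction rule. The map $c\mapsto\hat\gamma$ is \emph{not} continuous across $r=1$. On the boundary $r=1$ of case~(i) the outgoing fluxes are $A(c_1,\dots,c_n)^T$. For $r$ slightly above $1$ with $c_{\text{in}}<c_{\text{out}}$ (case~(ii)) they are $\frac{c_{\text{in}}}{c_{\text{out}}}(c_{n+1},\dots,c_{n+m})^T$. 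The two agree in the limit only if $r_j=1$ for every $j$. Concretely, take $n=1$, $m=2$, $f(\sigma)=1$, $A=(\alpha,1-\alpha)^T$, road~1 congested ($c_1=1$), road~3 free ($c_3=1$), and road~2 congested at the junction with flux $c_2=\alpha$. Then $r=1$ and $(\hat\gamma_2,\hat\gamma_3)=(\alpha,1-\alpha)$, which is a self-consistent junction state. Now let a congested front of strength $O(\varepsilon)$ reach the junction along road~2. It lowers $c_2$ to $\alpha-\varepsilon$ and puts you in case~(ii), with $(\hat\gamma_2,\hat\gamma_3)=\bigl(\frac{\alpha-\varepsilon}{1+\alpha-\varepsilon},\frac{1}{1+\alpha-\varepsilon}\bigr)$. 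Each outgoing flux jumps by about $\alpha^2/(1+\alpha)$, while $\Gamma=1$ before and after. So neither $\Delta\,\mathrm{TV}(f)\le C|\Delta\Gamma|$ nor any bound by the strength of the arriving front can hold, and this happens exactly at the regime switches you flagged. With it go the uniform flux-variation bound of Step~3 and the Lipschitz dependence that Step~4 relies on.

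A second ingredient is missing from the limit passage. The junction condition is imposed on the traces $\rho_i(t,b_i^-)$ and $\rho_j(t,a_j^+)$. Front-tracking approximations therefore satisfy it between interactions only if the rule, re-applied to the traces it has just produced, returns the same fluxes. This rule does not. In case~(ii) with $c_{\text{in}}<c_{\text{out}}$, every outgoing road receives $\lambda c_j<c_j$. A congested outgoing trace is thus replaced by a free one whose capacity is $f(\sigma)>c_j$; in case~(iii) the same happens to free incoming roads. In the example with $c_2=\alpha-\varepsilon$, the two possible traces on road~2 both fail:
\begin{itemize}
\item Keeping road~2's congested state as its trace gives case~(ii) and $\hat\gamma_2=\frac{\alpha-\varepsilon}{1+\alpha-\varepsilon}\neq\alpha-\varepsilon$, which contradicts that trace.
\item A free trace gives $c_2=1$, case~(i) and $\hat\gamma_2=\alpha$. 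But a free trace admissible under \eqref{PDE6} must carry flux below $\alpha-\varepsilon$.
\end{itemize}
So this Riemann problem, which satisfies $n\le m$ and has a common flux, has no self-similar solution whose traces obey the rule. No compactness argument in Step~3 can produce one. Finally, even where existence holds, your stated fallback of uniqueness within the class of front-tracking limits is weaker than the uniqueness of weak entropic solutions that the statement asserts.
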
 
\noindent From the proof in \citep{PDETraffic2}, it is not difficult to see that this generalizes to arbitrary flux functions satisfying $(\mathcal{F})$ as long as the junction conditions determine unique fluxes at the junction along each road, which is true for our version of the entropy conditions with the corresponding optimization problem as described in \eqref{PDE12}.

\subsubsection{Numerical Scheme}\label{numerical-scheme}

In this section, we present the numerical scheme used to simulate the evolution of traffic flow on networks of interest. We employ Godunov's method with an exact Riemann solver to evolve the internal densities along each road as described in \citep{NumericalTraffic}.

We restrict our attention to flux functions $f:[0,\rho_{\max}] \to \mathbb{R}_+$ that are continuous, piecewise smooth (piecewise $C^2$), and piecewise-quadratic in the following sense (the motivation for this particular form will be explained in Section 4): there exists $\sigma \in (0,\rho_{\max})$ such that
\begin{equation}\label{FluxPiecewise}
f(\rho) = \begin{cases}
v_f \rho, & \rho \in [0,\sigma] \\
A(\rho - \sigma)^2 + f_{\max}, & \rho \in [\sigma, \rho_{\max}]
\end{cases}
\end{equation}
where $v_f > 0$, $A < 0$, and $f_{\max} > 0$. This results in a concave flux with $f(0) = f(\rho_{\max}) = 0$ and a unique maximum at $\rho = \sigma$. The characteristic speed $f'(\rho)$ is constant ($f'(\rho) = v_f$) on the linear branch and strictly decreasing ($f'(\rho) = 2A(\rho - \sigma) < 0$) on the quadratic branch. Godunov's method requires only that the flux be Lipschitz continuous \citep{LeVeque2}, which is satisfied here. Similar piecewise quadratic flux functions have been successfully employed in traffic flow models \citep{Lu2008PiecewiseQuadratic}, where entropy solutions are constructed via the convex hull method.

First, we consider the evolution of \eqref{PDE1} on a given finite road $[a_i,b_i]$ in the absence of a junction with flux function $f$. We consider the numerical grid with the following notations:
\begin{itemize}[leftmargin = *]
\item $\Delta x$ be the spatial grid size,
\item $\Delta t$ be the temporal grid size, 
\item $(t_n,x_m) = (n\Delta t, m\Delta x)$ with $n \in \mathbb{N}$ and $m \in \mathbb{Z}$ be the grid points of interest, 
\item $M = \frac{b_i - a_i}{\Delta x}$ be the number of interior cells (excluding ghost cells), and
\item As a finite volume method, $\rho^n_m$ denotes the numerical density of cars at $(t_n,x_m)$ that is assumed to be constant on $[x_{m-1/2},x_{m+1/2}]$.
\end{itemize} 
Consider the $m$-th cell average for density given by
$$\rho_m(t) = \frac{1}{\Delta x}\int_{x_{m} - \Delta x/2 }^{x_{m} + \Delta x/2} \rho(x,t) \D x = \frac{1}{\Delta x}\int_{x_{m-1/2} }^{x_{m+1/2}} \rho(x,t) \D x.$$
Differentiating the above equation in time, applying \eqref{PDE1}, and integrating it from $t = t_n$ to $t_{n+1}$, we have
\begin{equation}\label{Scheme3}
\frac{\rho_{m}^{n+1} - \rho_{m}^n }{\Delta t}  = \frac{1}{\Delta x}\ml F(\rho_{m}^n,\rho_{m+1}^n) -  F(\rho_{m-1}^{n},\rho_m^{n})\mr.
\end{equation}
In the expression above, we can view the numerical flux term $F^n_{m+1/2}:= F(\rho^n_m,\rho^n_{m+1})$ as the flux between cells with density $\rho^n_m$ on $[x_{m-1/2},x_{m+1/2}]$ and $\rho^n_{m+1}$ on $[x_{m+1/2},x_{m+3/2}]$ at the cell boundary $x_{m+1/2}$, with positive flux defined as flux going into the cell $[x_{m+1/2},x_{m+3/2}]$.

\noindent \textbf{Exact Riemann Solver.} The numerical flux is computed by solving the Riemann problem exactly at each cell interface. For our piecewise quadratic flux function $f$, the exact solution follows from the convex hull construction \citep{LeVeque2, Lu2008PiecewiseQuadratic}:
\begin{equation}\label{Scheme4}
F^{n}_{m+1/2} = \begin{cases}
\min_{\rho \in [\rho^n_m,\rho^n_{m+1}]} f(\rho) &\text{ if } \rho^n_{m} \leq \rho^n_{m+1} \\
\min_{\rho \in [\rho^n_{m+1},\rho^n_{m}]}  f(\rho) &\text{ if } \rho^n_{m} \geq \rho^n_{m+1}.
\end{cases}
\end{equation}
For the piecewise linear-quadratic flux \eqref{FluxPiecewise}, this reduces to the following cases:
\begin{itemize}[leftmargin = *]
    \item If $\rho^n_m < \rho^n_{m+1} \leq \sigma$, then both states lie on the linear branch where $f'(\rho) = v_f > 0$ is constant. Since the characteristic speed is positive and does not vary across the states, we obtain a contact discontinuity (not a rarefaction), and information propagates from left to right. Thus, $F^n_{m+1/2} = f(\rho^n_m) = v_f \rho^n_m$, taking the upstream value.
    \item If $\sigma \leq \rho^n_m < \rho^n_{m+1}$, then both states lie on the quadratic branch where $f'(\rho) = 2A(\rho - \sigma) < 0$ is strictly decreasing with $\rho$. Since $\rho^n_m < \rho^n_{m+1}$ implies $f'(\rho^n_m) > f'(\rho^n_{m+1})$ and both characteristic speeds are negative, the characteristics point from right to left. This results in a shock wave with $F^n_{m+1/2} = f(\rho^n_{m+1})$, taking the upstream value from the right.
    \item If $\rho^n_m \leq \sigma < \rho^n_{m+1}$, then the states straddle the critical density $\sigma$. The flux is determined by the Rankine-Hugoniot shock speed $s = (f(\rho^n_{m+1}) - f(\rho^n_m))/(\rho^n_{m+1} - \rho^n_m)$; if $s > 0$, then $F^n_{m+1/2} = f(\rho^n_m)$, otherwise $F^n_{m+1/2} = f(\rho^n_{m+1})$.
    \item If $\rho^n_{m+1} < \rho^n_m$, then the characteristic directions are reversed, and symmetric cases apply to determine the minimum flux.
\end{itemize}
This construction ensures the numerical solution respects the characteristic structure and satisfies the entropy condition.
Consequently, rearranging \eqref{Scheme3}, we obtain
\begin{equation}\label{Scheme6}
\rho^{n+1}_m = \rho^n_m - \frac{\Delta t}{\Delta x}\ml F^n_{m+1/2} - F^n_{m-1/2} \mr.
\end{equation}
In addition, the choice of temporal and spatial step size must satisfy the CFL (Courant-Friedrichs-Lewy) condition:
\begin{equation}\label{Scheme7}
\Delta t \leq \nu \frac{\Delta x}{\max_{\rho}\mlm f'(\rho) \mrm},
\end{equation}
where $\nu$ is the CFL number. This condition ensures that only waves from neighboring Riemann problems interact in one time step. In our implementation, we use $\nu = 0.5$ for numerical stability.

\noindent \textbf{Boundary Conditions.} For a finite road, at both ends (i.e., at $x = a_i$ on the left and $x = b_i$ on the right), the road must either connect to a junction or have a boundary condition applied. Physically, each boundary represents one of three scenarios: (i) the road extends infinitely beyond the computational domain, (ii) the road has a prescribed incoming traffic density, or (iii) the road is connected to a junction. We distinguish two cases in our implementation:

\begin{itemize}[leftmargin = *]
\item \textit{Roads not connected to junctions.} These boundaries use either prescribed time-dependent boundary data or non-reflecting conditions. Non-reflecting boundary conditions are designed to simulate infinitely long roads extending beyond the computational domain, allowing traffic waves to exit the domain without artificial reflections that would contaminate the interior solution. A \emph{wave speed causality check} is performed, with prescribed boundary data imposed only when the characteristic speed points inward (i.e., when information can physically flow from the boundary into the domain). Specifically:
\begin{itemize}[leftmargin = 10pt]
    \item At the left boundary ($x = a_i$): if $f'(\rho^n_1) < 0$ (characteristic points outward), we impose the non-reflecting condition $\rho^n_0 = \rho^n_1$ to simulate an infinitely long road to the left; otherwise, prescribed data is used.
    \item At the right boundary ($x = b_i$): if $f'(\rho^n_M) > 0$ (characteristic points outward), we impose the non-reflecting condition $\rho^n_{M+1} = \rho^n_M$ to simulate an infinitely long road to the right; otherwise, prescribed data is used.
\end{itemize}
If no boundary data is specified, non-reflecting conditions $\rho^n_{0} = \rho^n_{1}$ or $\rho^n_{M+1} = \rho^n_M$ are imposed by default \citep{LeVeque2}.

\item \textit{Roads connected to junctions.} The boundary densities at junction interfaces are determined by the junction resolution procedure described in Section 3.1, which solves the entropy maximization problem \eqref{PDE12} with traffic distribution preferences to compute fluxes, then uses the inverse flux functions to update boundary densities.
\end{itemize}

\noindent The well-posedness of the initial-boundary value problem is ensured by respecting the direction of characteristic waves when imposing boundary conditions. By only prescribing boundary data when the characteristic speed points inward, and using non-reflecting conditions when it points outward, we prevent the system from being over- or under-determined. This approach, combined with the entropy-satisfying Godunov flux in the interior, guarantees unique, stable numerical solutions \citep{LeVeque2}. \\ 

\noindent \textbf{Supply-Demand Formulation for Junction Coupling.} Following the framework introduced by Lebacque \cite{Lebacque1996Godunov} and Daganzo \cite{Daganzo1994CTM}, the maximum admissible flux at junction boundaries is determined by supply and demand functions. In traffic flow terminology, \emph{demand} refers to the flux that drivers wish to send into the junction from incoming roads, while \emph{supply} refers to the capacity to receive flux on outgoing roads. For a road entering a junction (incoming road) ending at position $b_i$, the junction interface is located at $x_{M+1/2} = b_i$, and we evaluate the \emph{demand} (or sending function) at the last interior cell center $x_M$. For a road leaving a junction (outgoing road) starting at position $a_i$, the junction interface is at $x_{1/2} = a_i$, and we evaluate the \emph{supply} (or receiving function) at the first interior cell center $x_1$.

For our piecewise linear-quadratic flux, the demand and supply functions are:

\noindent \textit{Demand} $D$ (incoming road at cell center $x_M$ before the junction interface $x_{M+1/2}$, with density $\rho^n_M$):
\begin{equation}\label{Demand}
D(\rho^n_{M}) = \begin{cases}
\min\left(f(\rho^n_{M}), \dfrac{\rho^n_{M}}{\Delta t}\right) & \text{if } \rho^n_{M} \leq \sigma, \\[8pt]
f_{\max} & \text{if } \rho^n_{M} > \sigma.
\end{cases}
\end{equation}

\noindent \textit{Supply} $S$ (outgoing road at cell center $x_1$ after the junction interface $x_{1/2}$, with density $\rho^n_1$):
\begin{equation}\label{Supply}
S(\rho^n_1) = \begin{cases}
f_{\max} & \text{if } \rho^n_1 \leq \sigma, \\[8pt]
\min\left(f(\rho^n_1), \dfrac{\rho_{\max} - \rho^n_1}{\Delta t}\right) & \text{if } \rho^n_1 > \sigma.
\end{cases}
\end{equation}

The rationale for these constraints is as follows. In the demand function, when $\rho^n_{M} \leq \sigma$ (free flow), we limit the flux to prevent extracting more vehicles than are available in one time step. In the supply function, when $\rho^n_1 > \sigma$ (congested), we limit the flux to prevent injecting more vehicles than the remaining capacity allows in one time step. These constraints ensure numerical stability and are consistent with the discretization of the Godunov scheme \citep{Lebacque1996Godunov, Daganzo1994CTM}.

Importantly, the demand and supply functions \textbf{further restrict} the entropy admissible flux values $c_i$ and $c_j$ from \eqref{PDE7} and \eqref{PDE8}. For incoming road $i$, the CTM discretization modifies $c_i$ to:
\begin{equation}\label{CTMDemand}
c_i^{\mathrm{CTM}} = \begin{cases}
\min\left(c_i, \dfrac{\rho^n_M}{\Delta t}\right) & \text{if } \rho^n_M \leq \sigma, \\[8pt]
c_i & \text{if } \rho^n_M > \sigma,
\end{cases}
\end{equation}
and for outgoing road $j$, the CTM discretization modifies $c_j$ to:
\begin{equation}\label{CTMSupply}
c_j^{\mathrm{CTM}} = \begin{cases}
c_j & \text{if } \rho^n_1 \leq \sigma, \\[8pt]
\min\left(c_j, \dfrac{\rho_{\max} - \rho^n_1}{\Delta t}\right) & \text{if } \rho^n_1 > \sigma.
\end{cases}
\end{equation}
While the entropy conditions provide continuous admissible flux values based on characteristic theory, the CTM discretization constraints ensure the discrete scheme remains stable and converges to the entropy solution. The junction resolution uses these CTM-modified values $c_i^{\mathrm{CTM}}$ and $c_j^{\mathrm{CTM}}$ in place of $c_i$ and $c_j$.

The complete numerical method for solving traffic flow on a network is summarized in Algorithm \ref{alg:num-scheme-network}, which integrates all the components described above.

\begin{algorithm2e}[ht]
\caption{Numerical Scheme for Traffic Flow on a Network}
\label{alg:num-scheme-network}
\SetKwInOut{Input}{Input}
\SetKwInOut{Output}{Output}
\SetKwComment{Comment}{}{}
\DontPrintSemicolon
\BlankLine
\Input{ [List of Roads], [List of Junctions], global time step $\Delta t$, number of time steps $N$.}
\BlankLine
\textbf{Initialization:}\;
\For{Road in [List of Roads]}{
    Initialize flux function \eqref{FluxPiecewise}, road length, and number of lanes\;
    Set spatial grid $\Delta x$ according to CFL condition \eqref{Scheme7} with $\nu = 0.5$\;
    Attach each end to either a junction or specify boundary condition type;
}
\BlankLine
\textbf{Time Evolution:}\;
\For{$n \leftarrow 1 \; \KwTo \; N$}{
    \For{Road in [List of Roads]}{
        \textbf{Flux computation:} Compute Godunov fluxes $F^n_{m+1/2}$ for $m = 0, \ldots, M$ via \eqref{Scheme4} using densities $\rho^n_m$ and $\rho^n_{m+1}$. For roads connected to junctions, replace boundary fluxes $F^n_{1/2}$ or $F^n_{M+1/2}$ with stored junction fluxes $\gamma$ from previous time step\;
        \textbf{Density update:} Update interior cell densities $\rho^{n+1}_m$ for $m = 1, \ldots, M$ via \eqref{Scheme6}\;
        \textbf{Boundary conditions:} For roads not connected to junctions, update ghost cells $\rho^{n+1}_0$ or $\rho^{n+1}_{M+1}$ using boundary conditions (non-reflecting or prescribed) with wave speed check\;
    }
    \For{Junction in [List of Junctions]}{
        \textbf{Demand/Supply:} Compute CTM-restricted fluxes $c_i^{\mathrm{CTM}}$ via \eqref{CTMDemand} for incoming roads and $c_j^{\mathrm{CTM}}$ via \eqref{CTMSupply} for outgoing roads\;
        \textbf{Junction resolution:} Solve entropy maximization problem \eqref{PDE12} using $c_i^{\mathrm{CTM}}$, $c_j^{\mathrm{CTM}}$ to obtain junction fluxes $\gamma_i$, $\gamma_j$\;
        \textbf{Ghost cell update:} For incoming road $i$, set $\rho^{n+1}_{M+1} = f^{-1}(\gamma_i)$ where inverse is on branch with $f'(\rho) > 0$. For outgoing road $j$, set $\rho^{n+1}_0 = f^{-1}(\gamma_j)$ where inverse is on branch with $f'(\rho) < 0$\;
        \textbf{Store junction fluxes:} Store $\gamma_i$ and $\gamma_j$ for use as boundary fluxes in the next time step\;
    }
}
\end{algorithm2e}

\section{Modelling Flux Functions for Fire Evacuation}\label{section:modeling}
In this section, we will describe the model for a flux fundamental diagram during a wildfire evacuation scenario.

\subsection{Real World Data and Motivation} In 2014, Dixit and Wolshon \cite{Hurricane} showed that hurricane evacuations exhibit statistically significant differences from regular traffic, including changes in free-flow speeds and maximum flow rates. More recently, data from the 2019 Kincade wildfire \citep{kincade_2019} and the 2020 Glass wildfire \citep{glass_2020} suggest that wildfire evacuations produce a reduction in speed at all densities and a decrease in maximum flow rates. The Kincade and Glass data indicate only modest reductions in free-flow speeds ($\approx 1$--$3.5$ km/hr) and maximum flow rates ($\approx 2\%$--$5\%$), whereas simulation-based studies such as Intini et al.\ \cite{INTINI2022103211} suggest much larger reductions depending on visibility conditions.

\subsection{Model Description}
As discussed in Section \ref{section:math}, the two main changes in traffic flow during a wildfire evacuation are a decreased maximum flow rate ($f_c$) and a reduced free-flow speed ($v_f$). Since existing wildfire data suggests these changes are relatively minor, we use conservative estimates for $v_f$ and $f_c$. Following the Highway Capacity Manual (HCM) \citep{HCM}, we set the free-flow speed equal to the posted speed limit, with speed limit data obtained from Maui County's Code of Ordinances \citep{Ordinances_2024}. To estimate maximum flow rates, we first classify each road according to its Federal Highway Administration (FHWA) designation using the District of Maui's 2035 Transportation Plan \citep{2035_plan}, and then according to its designation in the County of Maui's Street Design Manual \citep{street_design_manual}. Using both classifications, we estimate the maximum flux capacity of each road segment from values in Maui County's Proposed Roadway Development Program \citep{roadway_development}, the Lahaina Bypass Record of Decision \citep{bypass_record_of_dec}, and traffic station data from the State of Hawai'i Department of Transportation. See the Appendix for detailed road segment data and capacity computations.

We use a jam density of 
\begin{equation}\label{jam-density}
\rho_j = 200 \text{ veh/mi/lane}
\end{equation}
based on recommendations from the HCM, as well as the approximate scaling of passenger cars to vehicles and average length of a passenger car. To match the real-world traffic flow diagrams, such as those provided in the Kincade and Glass wildfires, we implement a piecewise-continuous flux function with two regimes separated by a capacity density $\sigma$, where the behavior is linear in the uncongested region, and quadratic in the congested region:
\begin{equation}
    f(\rho) = \begin{cases}
v_f \rho, \quad&0\leq \rho < \sigma\\
A(\rho - \sigma)^2 + f_{c}, \quad& \sigma < \rho \leq \rho_j
\end{cases} := \begin{cases}
f_1(\rho), \quad&0\leq \rho < \sigma\\
f_2(\rho), \quad& \sigma < \rho \leq \rho_j
\end{cases}
\end{equation}
Using the inputs $f_c, v_f, p_j$, we can solve for the coefficients:
\begin{gather*}
    (1): f_{-}(\sigma) = f_{c} \Rightarrow \sigma = \frac{f_{c}}{v_f}\\
(2): f_{+}(\rho_j) = 0 \Rightarrow A = -\frac{f_{c}}{(\rho_j - \sigma)^2}  
\end{gather*}
We would like to normalize the flux functions such that $\rho_j = 1$. Thus, we consider the transformation
\begin{gather}\label{flux_funct}
    \tilde{f}(\rho) = \frac{1}{\rho_j} f\left(\rho_j \rho\right)\\
    \Rightarrow \tilde{f}_1(\rho) = \frac{1}{\rho_j}(v_f \rho_j \rho) = v_f \rho,\\\tilde{f}_2(\rho) = \frac{A}{\rho_j}(\rho_j\rho - \sigma)^2 + \frac{f_c}{\rho_j} = \frac{A}{\rho_j}(\rho_j)^2(\rho - \frac{\sigma}{\rho_j})^2 + \frac{f_c}{\rho_j} = A\rho_j(\rho - \frac{\sigma}{\rho_j})^2 + \frac{f_{c}}{\rho_j}
    \\\Rightarrow \tilde{f}(\rho) = \begin{cases}
v_f \rho, \quad&0< \rho \leq \frac{\sigma}{\rho_j}\\
A\rho_j(\rho - \frac{\sigma}{\rho_j})^2 + \frac{f_{c}}{\rho_j}, \quad& \frac{\sigma}{\rho_j} < \rho \leq 1
\end{cases}
\end{gather}
Thus, if we consider the normalized coefficients 
\begin{gather*}
    \tilde{f}_c = \frac{f_c}{\rho_j}, \quad \tilde{\sigma} = \frac{\sigma}{\rho_j} = \frac{\tilde{f_c}}{v_f}, \quad \tilde{A} = A\rho_j = -\frac{f_c \cdot \rho_j}{\rho_j^2(1-\frac{\sigma}{\rho_j})^2} = - \frac{\tilde{f_c}}{(1-\tilde\sigma)^2}
\end{gather*}
the normalized flux functions are given by
\begin{gather}\label{norm_flux_func}
    \tilde{f}(\rho) = \begin{cases}
v_f \rho, \quad&0\leq \rho < \tilde{\sigma}\\
\tilde{A}(\rho - \tilde{\sigma})^2 + \tilde{f}_c, \quad& \tilde{\sigma} < \rho \leq 1
\end{cases}
\end{gather}
\subsubsection{Scaling for \texorpdfstring{$n$}{n} Lanes}
We consider the normalized flux functions as functions for roads with one lane (in one direction), and assume that roads with $n$ lanes simply scale both the road density and flow rate values by a factor of $n$, ie 
$$f_n(\rho) = n \tilde f\left(\frac{\rho}{n}\right)$$
so the scaled flux functions are given by 
\begin{gather}
    f_n(\rho) = \begin{cases}
v_f \rho, \quad&0\leq \rho < n\tilde{\sigma}\\
\frac{\tilde{A}}{n}(\rho - n\tilde{\sigma})^2 + n\tilde{f}_c, \quad& n\tilde{\sigma} < \rho \leq n
\end{cases}
\end{gather}
\subsubsection{Initial densities} \label{subsection:initial_densities} Let $\rho_0$ be the normalized initial density along a road segment with $n$ lanes, where $0 \leq \rho_0 \leq 1$. Since network simulations are run sequentially, only the AM Base Network requires initial density estimates. These are obtained by assigning each road segment a level of service (LOS) based on Google Maps typical traffic data \citep{googlemaps_traffic_lahaina} and annual average daily traffic (AADT) volumes \citep{aadt}, using HCM level-of-service classifications \citep{HCM, init_den}. The full procedure, including FHWA facility type determination and service volume table lookups, is described in Appendix \ref{appendix:roaddata}.

Given the LOS, we estimate the initial velocity $v_0$ using the average travel speed for the given LOS $v_{av}$,
\begin{equation}
    v_0 \approx v_{av} = \alpha v_f
\end{equation}
where $0<\alpha\leq 1$ is a scaling parameter applied to the free-flow speed, given by \citep{IDOT2012AppendixG}, with the modification that $\alpha = 1$ for LOS A, due to the non-smooth nature of our flux model. Then, we obtain our initial density for the road segment using its corresponding normalized flux function $\tilde f(\rho)$, along with the fundamental relationship $\tilde f(\rho_0) = v_0\rho_0$.

If the road is LOS A, then this corresponds to the uncongested/linear region of our flux, and we assume that our initial hourly flow is estimated from AADT as above, to obtain
\begin{gather}
    \rho_0 = \frac{1}{\rho_jv_f}\left( \frac{\text{AADT}\cdot 0.1\cdot 0.57}{n }\right) 
\end{gather}
If the road is LOS B, C, D, or E, this corresponds to the congested/quadratic region of our flux, and we have 
\begin{gather*}
    \rho_0 = \frac{\tilde f(\rho_0)}{v_0} = \frac{\tilde{A}(\rho_0 - \tilde{\sigma})^2 + \tilde{f}_c}{v_0} 
\end{gather*}
After simplification and substituting $v_0 = \alpha v_f$, we have
\begin{gather*}
    \rho_0 = \frac{\alpha v_f}{2\tilde{A}} + \tilde{\sigma} + \sqrt{\frac{\tilde f_c}{\tilde{A}}(\alpha - 1) + \left(\frac{\alpha v_f}{2\tilde{A}}\right)^2}
\end{gather*}
Exact computations of $\rho_0$ for all roadway segments in the AM Base Network can be found in Appendix \ref{appendix:initdata}.

\section{Optimizing Road Networks - An Application of Non-smooth Optimization}\label{section:optimization}

In this section, we discuss the possibility of optimizing the traffic flow network to aid evacuation. To do so, we consider a traffic road network with $N$ roads, $M$ junctions, and $P$ preference parameters $\boldsymbol{\alpha} := (\alpha_1 ,\cdots, \alpha_P)$, a fixed time elapsed $T > 0$ and consider the following loss function:
\begin{equation}\label{Opt-Network1}
L(\boldsymbol{\alpha}) = -\sum_{i} w_i \int_0^T \int_{a_i}^{b_i} \rho_i(x,t;\boldsymbol{\alpha}) \D x \D t.
\end{equation}
In \eqref{Opt-Network1} above, the loss function represents the weighted negative sum of the time-integrated number of cars on all roads for an appropriate choice of weights $w_i$. Minimizing the loss function is equivalent to maximizing the cars on the outgoing roads, as intended. The corresponding constrained optimization problem is thus given by 
\begin{equation}\label{Opt-Network2}
\begin{aligned}
\min_{\boldsymbol{\alpha}} \quad & L(\boldsymbol{\alpha}), \\
\text{subject to} \quad & \boldsymbol{\alpha} \in \mathcal{C} \\
\end{aligned}
\end{equation}
where the set $\mathcal{C}$ corresponds to
\begin{equation}\label{Opt-Network3}
\mathcal{C} = \{ \alpha_i \in [0,1]: \alpha_i \text{ satisfies } \eqref{PDE9} \text{ on its corresponding junction for } i \in \{1,2,\cdots,P\}\}
\end{equation}
with $P$ as the number of parameters to optimize for our given network. 

\subsection{Appropriate Choice of Weights} 

To assign appropriate weights on each road, we proceed as follows. We compute the distance between the exiting road (which we label as index $i = 1$) and every other road in the network. To do so, we assume that there is an exiting junction (labeled with index $j = 1$) that has only a single exiting road ($i = 1$) directed out of it. We then run the Dijkstra's algorithm to compute the distance of all other junctions to the exiting junction in our directed network, initializing the initial distance at the exiting junction as $1$. Since each road is directed into a unique junction, we define the distance $d_i$ of a given road $i = 2,\cdots,N$ to be the distance of the junction it is directed into. For the exiting road $i = 1$, we set $d_1 := 0$. Finally, we set the corresponding weights $w_i$ to be
\begin{equation}\label{weights}
w_i = 2^{-d_i}
\end{equation}
for each $i = 1,\cdots,N$.

For instance, in our AM Base Network, the distance of the other roads and other junctions to the exiting road, Hwy30[7], and junction, h5, respectively, are illustrated in Figure \ref{fig:assign_weights}.

\begin{figure}[htbp]
    \centering
    \subfloat[Illustrated network]{\includegraphics[width=0.4\textwidth,height=2.5in]{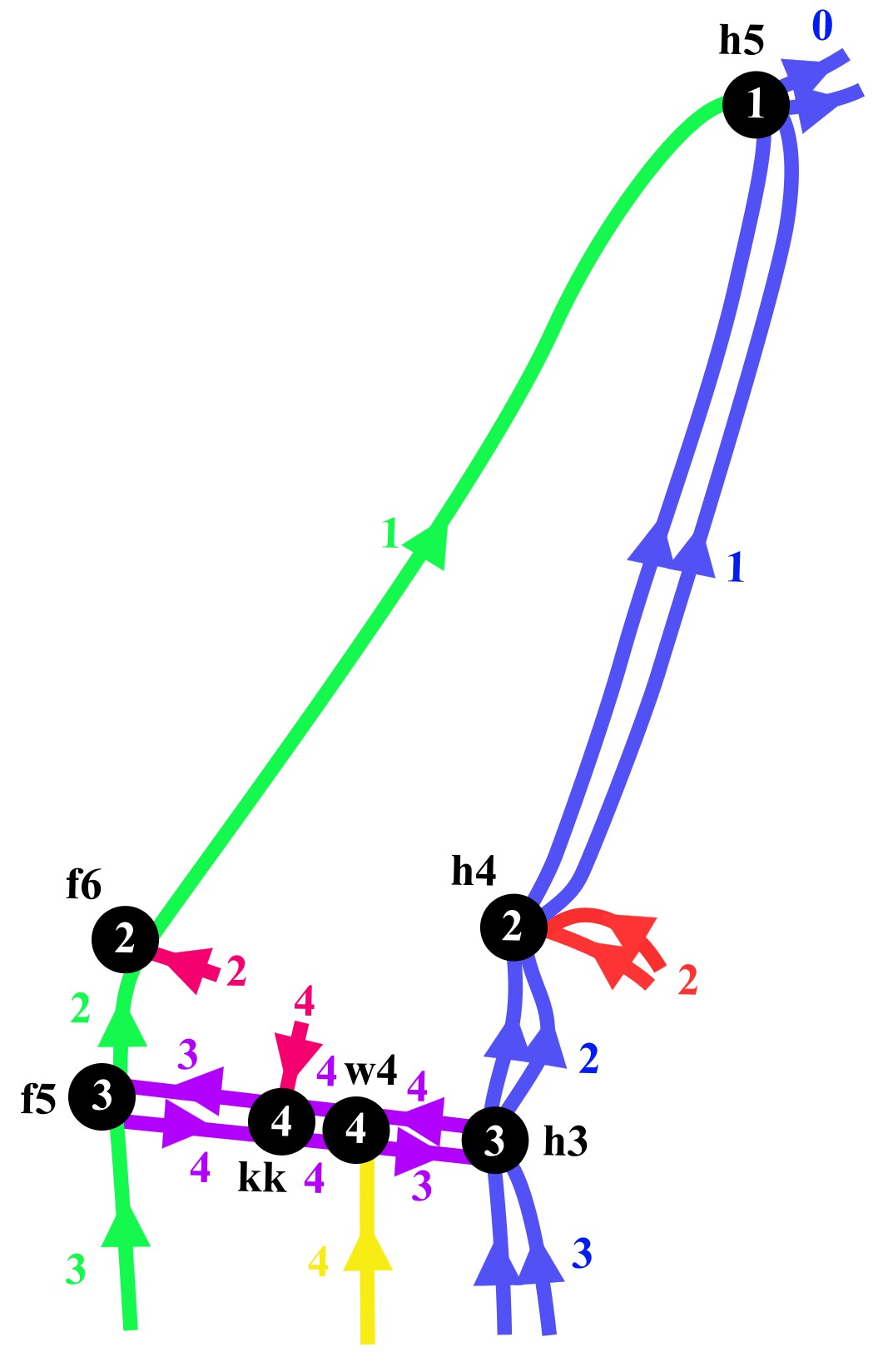}}
    \hskip5ex
    \subfloat[Network snapshot]{\includegraphics[width=0.4\textwidth, height=2.5in]{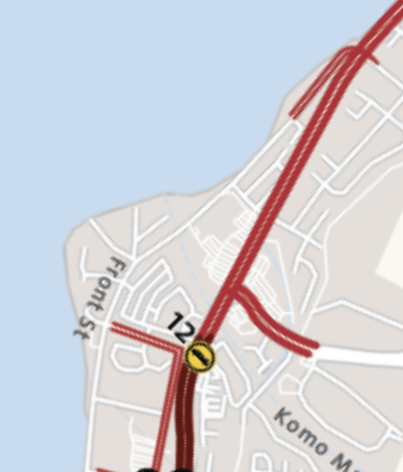}}
    \caption{An example illustrating how distances along each road (indicated by a number close to each road) and junction (indicated by a number inside each circle representing a junction) are being assigned in a subgraph of the original network containing the exiting road (Hwy30[7]) and junction (h5). In (A), the illustrated network diagram is presented, representing the segment of the real-world network in (B) (reproduced from \citep{lahainafirereport}).}
    \label{fig:assign_weights}
\end{figure}

\subsection{Optimization Algorithm}

With the choice of weights as described in $w_i$, we perform the optimization as follows. We run a stochastic block coordinate descent algorithm (SBCD) by picking a sampled index set  $\mathcal{S}$ (without replacement) for a total of $|\mathcal{S}| = s$ components and a fixed number of iterations $N_{\text{iter}}$. For each iteration, we compute the numerical gradient of the loss function in those components while using a backtracking line search with the Armijo-Goldstein condition. In particular, we have that the condition is satisfied if
\begin{equation}\label{AG-cond}
\mathcal{L} \ml \boldsymbol{\alpha} +  \tau \frac{\mathbf{p}(\boldsymbol{\alpha})}{\|\mathbf{p}(\boldsymbol{\alpha})\|} \mr \leq \mathcal{L}(\boldsymbol{\alpha}) - \tau \cdot c \cdot \|\mathbf{p}(\boldsymbol{\alpha})\|
\end{equation}
for some step size $\tau$ and some fixed control factor $c \in (0,1)$, with the descent direction given by
\begin{equation}\label{descent}
p_i(\boldsymbol{\alpha}) = \begin{cases}
-\frac{\p \mathcal{L}}{\p \alpha_i} &\text{ for } i \in \mathcal{S}, \\
0 &\text{ for } i \notin \mathcal{S}. \\
\end{cases}
\end{equation}
Numerically, the descent direction can be approximated by
\begin{equation}\label{descent-approx}
p_i(\boldsymbol{\alpha}) \approx \begin{cases}
-\frac{ \mathcal{L}(\boldsymbol {\alpha} + \epsilon \alpha_i) - \mathcal{L}(\boldsymbol{\alpha})}{\epsilon} &\text{ for } i \in \mathcal{S}, \\
0 &\text{ for } i \notin \mathcal{S}, \\
\end{cases}
\end{equation}
where $\epsilon$ is small.

We use SBCD rather than full gradient descent or derivative-free methods such as the discrete gradient method \citep{Num-Nonsmooth-DGM} for several reasons. First, the loss $\mathcal{L}$ is non-smooth, and coordinate descent methods are known to perform well on non-smooth objectives \citep{Wright2015-CD, Shi2016-PrimerCD, Nesterov2012}. Second, each evaluation of $\mathcal{L}$ requires a full network simulation, so computing only $s$ partial derivatives per iteration rather than the full gradient substantially reduces cost. Third, numerically only a small fraction of partial derivatives are significant at any given $\boldsymbol{\alpha}$, making full gradient computation wasteful. Derivative-free methods such as \citep{Num-Nonsmooth-DGM} also require multiple full-gradient calls and converge slowly in the presence of many local extrema.

To handle the constraints in \eqref{Opt-Network2}, we set a boundary tolerance $\varepsilon_{\text{tol}} > 0$ and define the boundary excess
\begin{equation}\label{eqn:boundary-excess}
\varepsilon_{\text{excess}}(\boldsymbol{\alpha}) := \max\{ 0,\max_i \alpha_i - (1-\varepsilon_{\text{tol}}), \varepsilon_{\text{tol}} - \min_i \alpha_i\}
\end{equation}
which measures how far $\boldsymbol{\alpha}$ lies outside $\mathcal{C}$. The mollified loss function $\tilde{\mathcal{L}}$ is then
\begin{equation}\label{eqn:mollified-loss}
\tilde{\mathcal{L}}(\boldsymbol{\alpha}) =
\begin{cases}
\mathcal{L}(\boldsymbol{\alpha}) &\text{if } \varepsilon_{\text{excess}}(\boldsymbol{\alpha})  = 0, \\
100\min\{\varepsilon_{\text{excess}}(\boldsymbol{\alpha}),1 \}T &\text{otherwise}
\end{cases}
\end{equation}
We then solve the unconstrained surrogate problem
\begin{equation}\label{eqn:unconstrained-opt}
\min_{\boldsymbol{\alpha} \in \mathbb{R}^P} \quad \tilde{\mathcal{L}}(\boldsymbol{\alpha})
\end{equation}
as an approximation to \eqref{Opt-Network2} via the SBCD method described above. The full algorithm is given in Algorithm \ref{alg:num-opt}.

\begin{algorithm2e}[ht]
\caption{Numerical Optimization for Parameters}
\label{alg:num-opt}
\SetKwInOut{Input}{Input}
\SetKwInOut{Output}{Output}
\SetKwComment{Comment}{}{}
\DontPrintSemicolon
\BlankLine
\Input{Initial parameters $\boldsymbol{\alpha}_{\text{init}}$, number of iterations $n_{\text{iter}}$, number of sampled indices $s$, control factor $c$, decay factor $f$, initial step size $\tau_{\text{init}}$, maximum number of decay $N_{\text{decay}}$.}
\Output{Output parameters $\boldsymbol{\alpha}$.}
\BlankLine
$\boldsymbol{\alpha} \gets \boldsymbol{\alpha}_{\text{init}}$; \\
\For{$i \leftarrow 1, \cdots, n_{\text{iter}}$}{
    $n_{\text{decay}} \leftarrow 0$, step size $\tau \leftarrow \tau_{\text{init}}$, number of decays effected $n_{\text{decay}} \gets 0$ \;
    Uniformly sample $s$ components (with the indices as $\mathcal{S}$) out of a total of $P$ components\;
    Compute $\mathbf{p}$ numerically using finite differences as described in \eqref{descent-approx} for components in $\mathcal{S}$\;
    \texttt{check} $\gets \mathcal{L}(\boldsymbol{\alpha} + \tau \frac{\mathbf{p}}{\|\mathbf{p}\|}) - \ml \mathcal{L}(\boldsymbol{\alpha}) - c\tau\|\mathbf{p}\|\mr$ \\
    \While{\texttt{check} $> 0$ and $n_{\text{decay}} < N_{\text{decay}}$}{
        $\tau \gets f\tau$, $n_{\text{decay}} \gets n_{\text{decay}} + 1$; \\
    }
    \If{$n_{\text{decay}} < N_{\text{decay}}$;}{
        $\boldsymbol{\alpha} \gets \boldsymbol{\alpha} + \tau \frac{\mathbf{p}}{\|\mathbf{p}\|}$
    }
}
\end{algorithm2e}

\section{Numerical Results}\label{section:results}
In this section, we apply the optimization algorithm to a synthetic road network and the Lahaina wildfire evacuation scenario. The simulations can be found at \hyperlink{https://github.com/alexxue99/Traffic-Simulation}{https://github.com/alexxue99/Traffic-Simulation}.

We report three metrics over a horizon $T$: the weighted time-integrated cars, the cars entered, and the cars exited. The weighted time-integrated cars metric is
\begin{equation}\label{time-integrated}
\text{Weighted Time-Integrated Cars}(T) =  \sum_{i} W_i(T)
\end{equation}
with the contribution on each road as
\begin{equation}\label{time-integrated2}
W_i(T) := 2^{-d_i} \int_0^T \int_{0}^{L_i} \rho_i(x,t) \D x \D t
\end{equation}
as per equation \eqref{Opt-Network1} and weights in \eqref{weights}. The cars entered metric is
\begin{equation}\label{cars-entered}
\text{Cars Entered}(T) = \rho_{\text{jam}}\sum_{\text{Source roads } i}\int_0^T f_i(0^+,t)\D t
\end{equation}
and the cars exited metric is
\begin{equation}\label{cars-exited}
\text{Cars Exited}(T) = \rho_{\text{jam}}\sum_{\text{Exit roads } i}\int_0^T f_i(L_i^-,t)\D t
\end{equation}
where $\rho_\text{jam}$ is the jam density used in \eqref{jam-density}. The implicit dependence on $\alpha$, the drivers' preferences, is assumed in each of these expressions.

All heatmaps use the color scheme shown in Figure~\ref{fig:los_color}. Each point along a road segment is assigned a level of service (LOS) and colored according to the local traffic density. The procedure used to compute LOS is described in Section~\ref{subsection:initial_densities}.
\begin{figure}[htbp]
    \centering
    \includegraphics[scale=0.8]{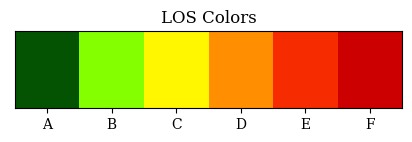}
    \caption{Color legend for LOS classification.}
    \label{fig:los_color}
\end{figure}

\subsection{Example 1: Toy Network}
\begin{figure}[htbp]
    \centering
    \includegraphics[scale=0.15]{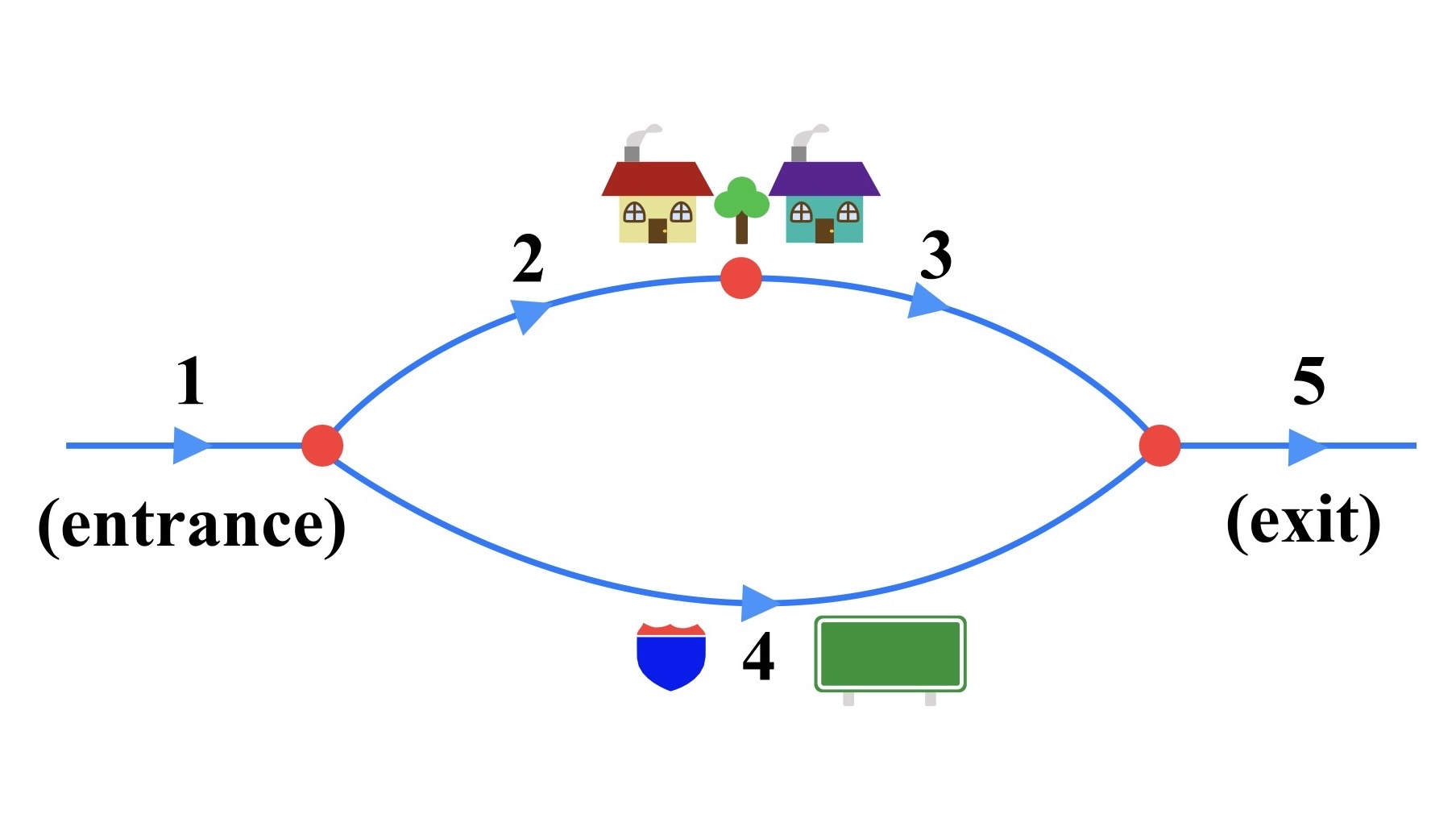}
    \caption{A simple toy model. Road 1 is the entrance to the network, and Road 5 is the exit. Roads 2 and 3 represent a path through a residential area with multiple stops, while Road 4 represents a continuous path via a highway.}
    \label{fig:toy}
\end{figure}

We demonstrate the framework on the toy network shown in Figure~\ref{fig:toy}. We let the free-flow speed along road $i$ be denoted by $v_i$, the maximum flux along road $i$ be $f_{c,i}$ (this is after accounting for scaling by the number of lanes on that road), $\rho_i \in [0,n_i]$ for $n_i$ lanes on road $i$ with capacity density $\sigma_i$, and $A_i$ the coefficient in front of the quadratic term for the flux function in the congested regime for road $i$. In other words, we have
\begin{equation}\label{example_eq1}
f_i(\rho) = \begin{cases}
v_i \rho, & \rho \in [0,\sigma_i), \\
A_i(\rho - \sigma_i)^2 + f_{c,i}, & \rho \in (\sigma_i,n_i].
\end{cases}
\end{equation}
The parameter $\alpha\in(0,1)$ denotes the drivers' preference at the left junction towards road 2.

\newcommand{\tzero}{\frac{L}{v_4}}
\newcommand{\tone}{\frac{L}{v_2}}
\newcommand{\tauFive}{\frac{L_5}{v_1}}

Table~\ref{table:toyparameters} summarizes the road parameters.

\begin{table}[h]
\centering
\begin{tabular} {|l|c|c|c|c|}
\hline
Road & Length & Lanes ($n_i$) & Free-flow speed ($v_i$) & Flux capacity ($f_{c,i}$) \\
\hline
 Entry (road 1) & $L_1$ & $1$ & $v_1$ & $f_{c,1}$ \\
 Road 2 & $L_2$ & $1$ & $v_2$ & $f_{c,2}$ \\
 Road 3 & $L_3$ & $1$ & $v_2$ & $f_{c,2}$ \\
 Road 4 & $L_4$ & $1$ & $v_4$ & $f_{c,4}$ \\
 Exit (road 5) & $L_5$ & $n_5$ & $v_1$ & $n_5 f_{c,1}$ \\
 \hline
\end{tabular}
\caption{Symbolic parameters for the toy network. Roads 2 and 3 share parameters. Each lane on the exit road has the same characteristics as road 1, so $f_{c,5}=n_5 f_{c,1}$, $\sigma_5=n_5\sigma_1$, and $A_5=A_1/n_5$.}
\label{table:toyparameters}
\end{table}

For the simulations, we use the following numerical values. The entry road (road 1) and exit road (road 5) each have length $0.5$ mi, free-flow speed $v_1=25$ mi/hr, and flux capacity $f_{c,1}=500$ veh/hr/lane. Roads 2 and 3 (the residential route) each have length $0.5$ mi, free-flow speed $v_2=15$ mi/hr, and flux capacity $f_{c,2}=400$ veh/hr/lane. Road 4 (the highway route) has length $1$ mi, free-flow speed $v_4=20$ mi/hr, and flux capacity $f_{c,4}=500$ veh/hr/lane. The jam density is $\rho_{\text{jam}}=100$ veh/mi/lane throughout.

The network satisfies the following structural conditions:
\begin{enumerate}[(i)]
\item \textit{Residential area.} $v_2 < v_4$, reflecting that roads 2 and 3 pass through a residential area with lower speed limits compared to the highway route via road 4.
\item \textit{Equal path lengths.} $L := (L_2 + L_3) = L_4$, so the two routes from the left junction to the right junction have equal total length.
\item \textit{Exit road scaling.} Each lane on the exit road has the same characteristics as road 1, i.e., $f_{c,5} = n_5 f_{c,1}$, $v_5 = v_1$, $\sigma_5 = n_5 \sigma_1$, and $A_5 = A_1/n_5$.
\end{enumerate}

We study the congested regime in which all interior roads are initialized at a density above capacity ($\rho_{\text{init}} > \sigma_i$ for $i \in \{1,2,3,4\}$), with the exit road initially empty ($\rho_5(0,x) \equiv 0$). This scenario is the most relevant for evacuation planning. Additional experiments with uncongested and single-source initial data, which admit closed-form solutions independent of $n_5$, are treated in Appendix~\ref{appendix:proofs}.

Under these initial conditions, the toy network exhibits the following phase transition.

\begin{proposition}\label{closedform-CD}
\textbf{(Phase Transition in Exit Lane Capacity.)} Consider the toy network in Figure~\ref{fig:toy} with parameters as in Table~\ref{table:toyparameters} satisfying conditions~(i)--(iii), with congested initial data $\rho_{\text{init}} > \sigma_i$ for $i \in \{1,2,3,4\}$ and $\rho_5(0,x) \equiv 0$. The critical exit lane threshold is
\begin{equation}\label{n5-star}
n_5^\star := \frac{f_{c,2}+f_{c,4}}{f_{c,1}}.
\end{equation}
This result holds as long as the initial queues on the incoming roads have not depleted, i.e., during the bottleneck-dominated regime most relevant for evacuation planning.
\end{proposition}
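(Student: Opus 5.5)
We work junction by junction, using Theorem \ref{opt-sol} to compute the stationary fluxes that hold while the initial queues on roads $1$--$4$ persist. The aim is to show that the exit flux equals $\min\{n_5 f_{c,1},\, f_{c,2}+f_{c,4}\}$. From this the threshold $n_5^\star$ follows, since the exit flux is linear in $n_5$ below it and constant above it.

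\textbf{Step 1: capacities.} Every incoming road at a junction starts congested, $\rho_{\text{init}}>\sigma_i$. By \eqref{PDE7}, its demand capacity is therefore its full maximal flux $f_{c,i}$. The exit road starts empty, $\rho_5(0,\cdot)\equiv 0\le\sigma_5$, so by \eqref{PDE8} its supply is $c_5=f_{c,5}=n_5f_{c,1}$. The supplies of roads $2,3,4$ at their upstream ends are not immediate. Each is congested, so by \eqref{PDE8} its supply is initially $\gamma_{i,0}=f_i(\rho_{\text{init}})<f_{c,i}$. After a transient, a rarefaction emanating from the downstream junction decongests it.

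\textbf{Step 2: right junction.} The right junction has incoming roads $3$ and $4$, with demands $c_3=f_{c,2}$ and $c_4=f_{c,4}$, and a single outgoing road $5$. With one outgoing road the matrix $A$ is the row $(1,1)$, so $r=(f_{c,2}+f_{c,4})/(n_5f_{c,1})$. There are two cases.
\begin{itemize}
\item If $n_5\ge n_5^\star$, then $r\le1$ and case (i) of Theorem \ref{opt-sol} gives exit flux $f_{c,2}+f_{c,4}$, independent of $n_5$.
\item If $n_5<n_5^\star$, then $r>1$ and $c_{\text{in}}>c_{\text{out}}$, so case (iii) gives exit flux $c_{\text{out}}=n_5f_{c,1}$, linear in $n_5$.
\end{itemize}
The other junctions must be checked so that they do not lower the demands of roads $3$ and $4$ below $f_{c,2}$ and $f_{c,4}$ while the queues last. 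The middle junction is $2\to3$, and road 2 is itself queued. At the left junction, road $1$ has demand $f_{c,1}$ and roads $2$ and $4$ are the outgoing roads. Because roads $3$ and $4$ remain above $\sigma$ near their downstream ends until the queue there drains, their downstream demand stays at capacity. This is exactly the hypothesis that the queues have not depleted. Under it, the downstream end of each route acts as a saturated source.

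\textbf{Step 3: why the left junction and $\alpha$ do not matter.} While road $4$ retains a congested segment adjacent to the right junction, the flux it delivers there equals $f_{c,4}$ regardless of upstream inflow, and likewise for road $3$. The exit flux in Step 2 is therefore independent of $\alpha$. This is the statement that route optimization gives no benefit above $n_5^\star$, and that only $n_5$ matters below it.

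\textbf{Main obstacle.} The delicate point is justifying rigorously that the downstream ends of roads $3$ and $4$ stay at capacity demand. This requires tracking the rarefaction and shock waves generated at each junction, in particular a backward-moving shock on road $5$ when $n_5<n_5^\star$ and rarefactions on roads $3$ and $4$ when $n_5\ge n_5^\star$. It must also be checked that no wave from the left junction reaches the right junction before the queue is exhausted. I would handle this with explicit characteristic speeds from \eqref{example_eq1} on the linear and quadratic branches. I would then define the validity window of the proposition as the time until the first such wave arrives or a queue empties.
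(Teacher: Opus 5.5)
Your argument is correct and is essentially the paper's proof. Like the paper, you take the demands $f_{c,2}$ and $f_{c,4}$ of the two congested incoming roads and the supply $n_5 f_{c,1}$ of the empty exit, form $r=(f_{c,2}+f_{c,4})/(n_5 f_{c,1})$ with $A=(1\ \ 1)$, read off $\gamma_5=\min\{f_{c,2}+f_{c,4},\,n_5 f_{c,1}\}$ from cases (i) and (iii) of Theorem~\ref{opt-sol}, and treat the non-depletion window as a hypothesis rather than tracking waves.

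Two small slips in your commentary, neither affecting the main argument:
\begin{enumerate}[(a)]
\item Road~5 never carries a backward shock. It is only ever assigned at most its free-flow capacity, so $\hat\rho_5\le\sigma_5$ and its front moves forward at speed $v_1$. Any backward waves live on the incoming roads, and the paper checks that they are rarefactions for $n_5=1$.
\item In Step~3, when $n_5<n_5^\star$ it is the \emph{demand} of road~4 that equals $f_{c,4}$, not the flux it delivers. The delivered flux is $(n_5/n_5^\star)f_{c,4}$.
\end{enumerate}
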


\begin{proof}
Since roads 2 and 4 are congested at the right junction, the incoming capacities are $c_2 = f_{c,2}$ and $c_4 = f_{c,4}$. Since road 5 is initially empty, $c_5 = f_{c,5} = n_5 f_{c,1}$. The capacity ratio at the right junction is
$r = (f_{c,2}+f_{c,4})/(n_5 f_{c,1}),$
so $r \leq 1$ if and only if $n_5 \geq n_5^\star$. By Theorem~\ref{opt-sol}, exit throughput is $\gamma_5 = \min\{f_{c,2}+f_{c,4},\, n_5 f_{c,1}\}$, which scales linearly in $n_5$ for $n_5 < n_5^\star$ and is constant for $n_5 \geq n_5^\star$. Detailed closed-form expressions for all experiments on this network are given in Appendix~\ref{appendix:proofs}.
\end{proof}

The threshold $n_5^\star$ marks a phase transition corresponding to a shift from case~(iii) to case~(i) of Theorem~\ref{opt-sol} as exit lanes increase. For $n_5 < n_5^\star$, the exit road is the bottleneck: exit throughput equals $n_5 f_{c,1}$ and scales linearly with $n_5$ (exit-capacity-limited regime), and drivers must abandon their original preferences to maximize junction throughput. For $n_5 \geq n_5^\star$, exit throughput equals $f_{c,2}+f_{c,4}$ and is independent of $n_5$ (supply-limited regime), meaning the exit can absorb all incoming flux while respecting preferences.

Substituting the numerical values from above with $\rho_{\text{init}} = 0.9$, the phase transition occurs at
\[
n_5^*=\frac{f_{c,2}+f_{c,4}}{f_{c,1}}=\frac{400+500}{500}=1.8.
\]
Figure~\ref{fig:cars_exited_per_lane} confirms this behavior, showing that the number of cars exited scales linearly in $n_5$ up to $n_5^*=1.8$, after which it plateaus. Table~\ref{table:toy-exp-D} reports the simulation metrics for integer lane counts, showing that the transition from $n_5=1$ to $n_5=2$ yields a substantial improvement, while $n_5=3$ provides no additional benefit. Figure~\ref{fig:toy_flooded} illustrates this contrast: for $n_5=2>n_5^*$, backward rarefaction fans on roads 2 and 4 are visually pronounced as the junction boundary densities drop to $\sigma_i \ll \rho_{\text{init}}$, whereas for $n_5=1<n_5^*$ the network remains heavily congested.

\begin{table}[h]
\centering
\begin{tabular}{c c c c}
\hline
$n_5$ & Weighted Time-Int Cars & Cars Exited & Optimal $\alpha$ \\
\hline
1 & 851.56 & 126.37 & 0.50 \\
2 & 762.25 & 227.48 & 0.50 \\
3 & 762.25 & 227.48 & 0.50 \\
\hline
\end{tabular}
\caption{Congested toy network ($\rho_{\text{init}} = 0.9$ on all interior roads) with $T = 1000$ seconds and $nt_{\text{opt}} = 1$ second. The transition from 1 to 2 lanes improves cars exited by 80\%, while 3 lanes provides no further benefit, consistent with $n_5^*=1.8$.}
\label{table:toy-exp-D}
\end{table}

\begin{figure}[htbp]
\centering
\includegraphics[width=0.8\textwidth]{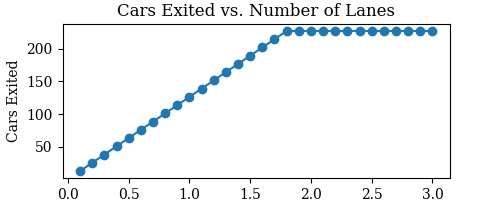}
\caption{Cars exited as a function of the number of exit road lanes $n_5$ (treated as a continuous variable), for the congested toy network with $\rho_{\text{init}} = 0.9$ and $T = 1000$ seconds. The phase transition at $n_5^*=1.8$ is clearly visible.}
\label{fig:cars_exited_per_lane}
\end{figure}

\begin{figure}[htbp]
\centering

\begin{minipage}[t]{0.48\textwidth}
    \centering
    \includegraphics[width=\linewidth]{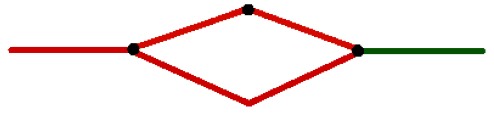}
\end{minipage}\hfill
\begin{minipage}[t]{0.48\textwidth}
    \centering
    \includegraphics[width=\linewidth]{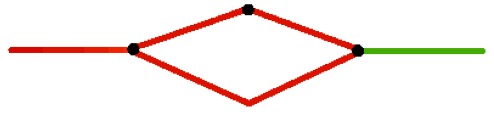}
\end{minipage}

\vspace{0.5em}

\begin{minipage}[t]{0.48\textwidth}
    \centering
    \includegraphics[width=\linewidth]{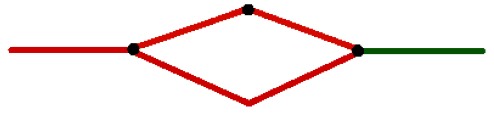}
\end{minipage}\hfill
\begin{minipage}[t]{0.48\textwidth}
    \centering
    \includegraphics[width=\linewidth]{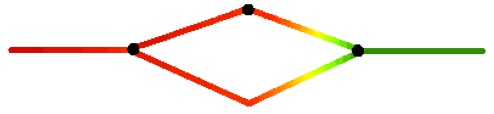}
\end{minipage}
\caption{Snapshots of the congested toy network (all interior roads initialized at $\rho_{\text{init}}=0.9$) at $t = 0$ (left) and $t = 1000$ seconds (right), for $n_5 = 1$ (top) and $n_5 = 2$ (bottom). See Appendix~\ref{appendix:proofs} for a detailed rarefaction analysis.}
\label{fig:toy_flooded}
\end{figure}

\subsection{Example 2: Lahaina Wildfire Simulations}
In this subsection, we will be applying the optimization algorithm to the Lahaina fire that occurred on August 8-9, 2023. Figure~\ref{fig:lahaina} shows a map of Lahaina from~\citep{lahainafirereport}, including the major roads and the congestion points during the time frame 11:00-15:00 on August 8th.
\begin{figure}[htbp]
    \centering
    \includegraphics[scale=0.2]{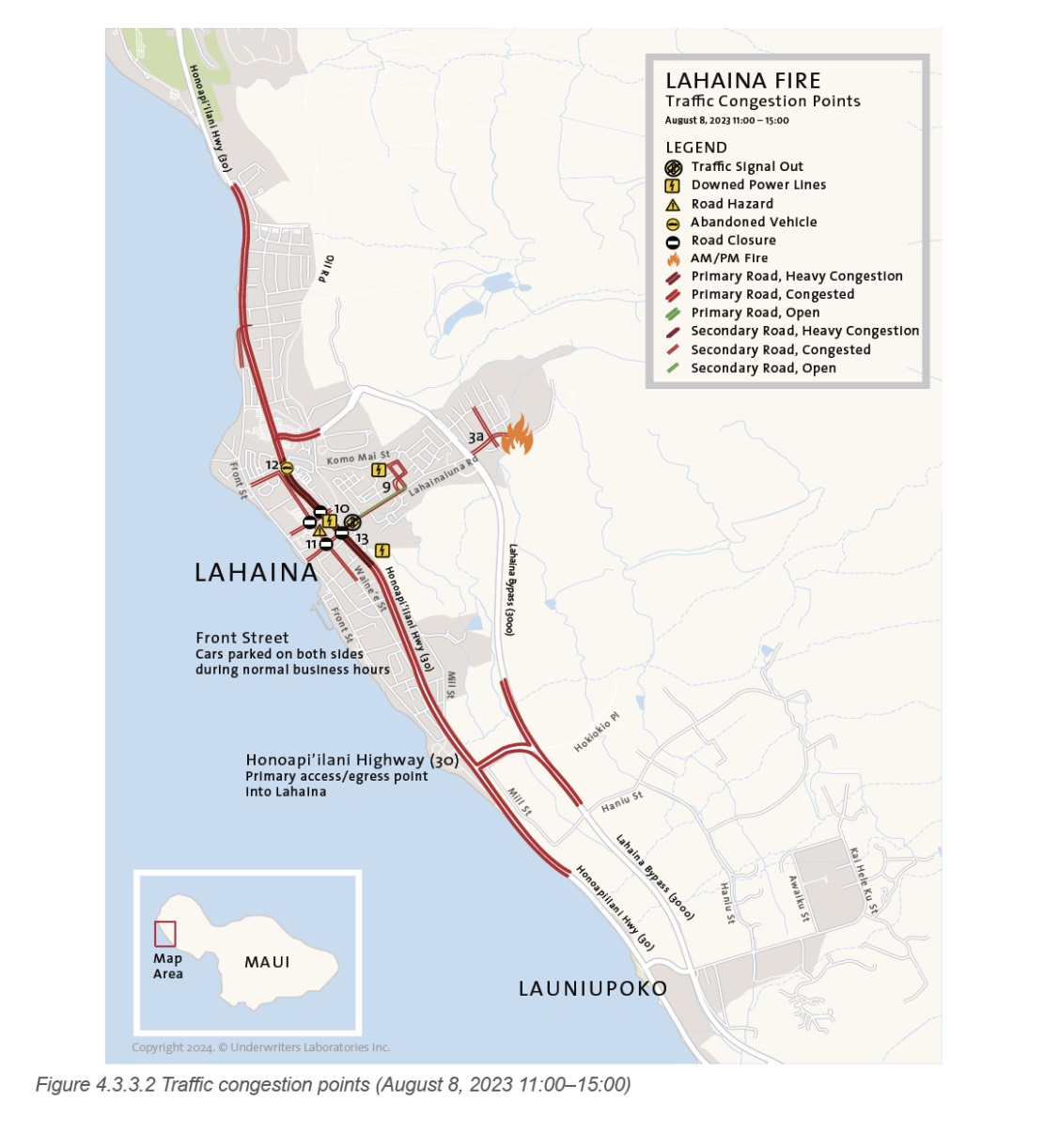}
    \caption{Traffic congestion points during the Lahaina fire. Reproduced from \citep{lahainafirereport}.}
    \label{fig:lahaina}
\end{figure}

Below, we run a couple of simulations for this time range. We also run some simulations for a later time range of approximately 15:00-17:10. To differentiate between these times, we label the first set of simulations as ``AM" and the second set as ``PM". The goal of these simulations is to optimize how traffic could have been directed in order to maximize the overall efficiency of the network. We measure the overall efficiency of the network as the time integrated number of cars along the main roads.

Since there are several optimization and model parameters, we run simulations in four phases. In Phase 1, we run an initial sweep of the optimization parameters by examining different combinations on the AM Base Network. Specifically, we look at the effect of $\gamma$, the initial densities from source roads, and $nt_{opt}$, the amount of time to optimize preferences over. 

In Phase 2, we first use the results from Phase 1 to select a smaller subset of parameters to examine in depth. This is done by performing robust simulations of initial networks corresponding to the first part of the evacuation, before the fire overtook Lahaina Bypass. These initial networks are AM Base, AM 2, AM3, and PM Base. 

In Phase 3, we simulate the full sequence of networks for a fixed $nt_{opt} = 1$, chosen based on the results in the previous phases. Since the second period in the evacuation (after the bypass was overtaken by the fire) resulted in residents of Eastern Lahaina having less access to evacuation routes, we now assume that the initial density of source roads in these areas may be greater than or equal to the roads in Western Lahaina. We relabel the initial density in Eastern Lahaina as $\gamma_1$, and denote the varying initial density of the Western area as $\gamma_2$. Consequently, we study the effect on the later networks (PM 2 through PM 5) when this alternative initial density $\gamma_2\in [\gamma_1, 1]$ is varied. 

Finally, in Phase 4, we examine the effect of reversing one or both southbound lanes on the northern exit to increase exit capacity. This is a practical intervention that requires no additional infrastructure, and we assess its impact on network efficiency to provide recommendations for improving future evacuation outcomes.

\subsubsection{Network Descriptions}Figure~\ref{fig:am_base_with_legend} shows the base network we use for the start of the AM simulations. We included the major roads, as well as important source roads for where roads can enter the network.

\begin{figure}[htbp]
\centering
\begin{minipage}[c]{0.45\textwidth}
    \centering
    \includegraphics[width=\linewidth]{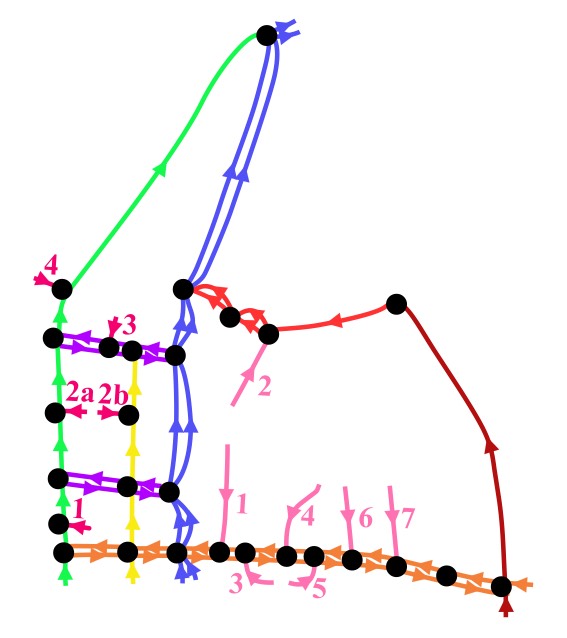}
\end{minipage}\hfill
\begin{minipage}[c]{0.5\textwidth}
    \centering
    \small
    \fbox{
    \begin{tabular}{@{}ll@{}}
    \textcolor{honopiilani}{$\blacksquare$} & Honopiilani Highway / HI-30 \\
    \textcolor{front}{$\blacksquare$} & Front Street\\
    \textcolor{wainee}{$\blacksquare$} & Waine'e Street\\
    \textcolor{minor}{$\blacksquare$} & Minor residential roads\\
    & (Papalaua Street, Kenui Street)\\
    \textcolor{keawe}{$\blacksquare$} & Keawe Street\\
    \textcolor{bypass}{$\blacksquare$} & Lahaina Bypass\\
    \textcolor{ll}{$\blacksquare$} & Lahainaluna Road\\
    \textcolor{east_source}{$\blacksquare$} & Eastern source roads\\
    & (1. Kuhua, 2. Komo Mai, 3. Pauoa,\\ & 4. Kale,
    5. Paunau, \\
    &6. Kelawea, 7. Kalena)\\
    \textcolor{west_source}{$\blacksquare$} & Western/default source roads\\
    & (1. Wahie, 2. Baker,\\
    & 3. Kahoma Village Loop, 4. Puunoa)
    \end{tabular}}
\end{minipage}
\caption{AM Base Network and Road Color Legend. Simulated from 11{:}00--13{:}25.}
\label{fig:am_base_with_legend}
\end{figure}

As major incidents occur, we modify the simulated network to reflect any changes. The information about all of the complications can be found in~\citep{lahainafirereport}. Detailed diagrams of all intermediary networks can be found in Appendix \ref{appendix:num_impl}. The first incident occurred at approximately 13:25, where multiple power lines were downed between Papalaua Street and Lahainaluna Road. We represent this in AM Network 2 by removing Papalaua Street from the network.

The next incident occurred at approximately 14:21, where metal roofs were blown onto the roads on Lahainaluna Road. Specifically, the eastbound part from Front Street through Wainee Street became unusable, which we represent in AM Network 3 by removing the parts from Lahainaluna Road between Front Street and Wainee Street and between Wainee Street and Hwy-30.
\begin{figure}[htbp]
\centering
\begin{minipage}[c]{0.45\textwidth}
    \centering
    \includegraphics[width=\linewidth]{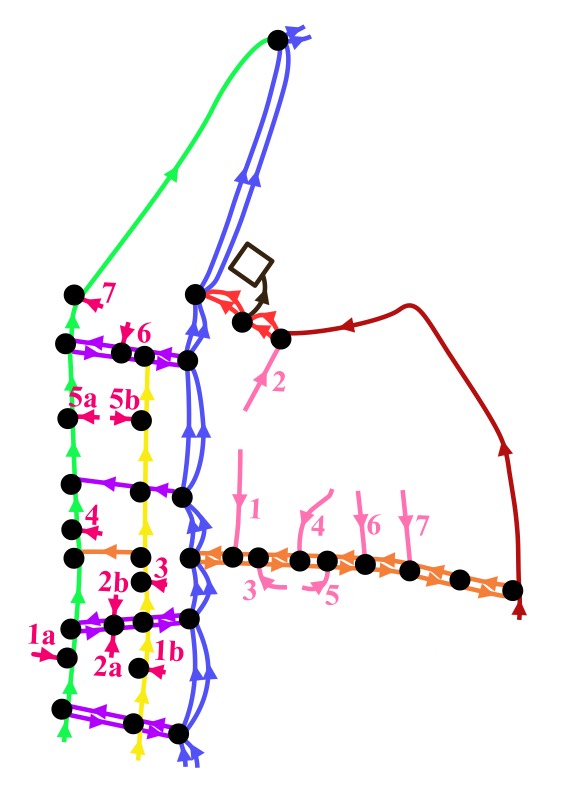}
\end{minipage}\hfill
\begin{minipage}[c]{0.5\textwidth}
    \centering
    \small
    \fbox{
    \begin{tabular}{@{}ll@{}}
    \textcolor{honopiilani}{$\blacksquare$} & Honopiilani Highway / HI-30 \\
    \textcolor{front}{$\blacksquare$} & Front Street\\
    \textcolor{wainee}{$\blacksquare$} & Waine'e Street\\
    \textcolor{minor}{$\blacksquare$} & Minor residential roads\\
    & (Prison, Dickenson, Papalaua, Kenui)\\
    \textcolor{keawe}{$\blacksquare$} & Keawe Street\\
    \textcolor{bypass}{$\blacksquare$} & Lahaina Bypass\\
    \textcolor{ll}{$\blacksquare$} & Lahainaluna Road\\
    \textcolor{dirt}{$\blacksquare$} & Oil Road\\
    $\blacksquare$ & Gateway Shopping Center\\
    \textcolor{east_source}{$\blacksquare$} & Eastern source roads\\
    & (1. Kuhua, 2. Komo Mai, 3. Pauoa,\\
    &4. Kale,
    5.  Paunau, 6. Kelawea, \\
    &7. Kalena, 8. dirt road )\\
    \textcolor{west_source}{$\blacksquare$} & Western/default sources\\
    & (1a. Canal, 1b. Hale, 2. Luakini,\\
    & 3. Panaewa, 4. Wahie, 5. Baker,\\
    & 6. Kahoma Loop, 7. Puunoa)
    \end{tabular}}
\end{minipage}
\caption{PM Base Network and Road Color Legend. Note that Road 8, a nondescript dirt road, is not in PM Base, but is found in other PM Networks.}
\label{fig:pm_base_with_legend}
\end{figure}
For the PM networks, Figure~\ref{fig:pm_base_with_legend} shows the base PM network. It looks similar to AM Network 3 (Figure~\ref{fig:am3}), but we removed the right-most portion of Lahainaluna Road to reflect the fire being done with the east side of Lahaina, and we added some additional source roads for the increasing number of cars that came into the network as more people evacuated.

At approximately 15:25, the fire took over the bypass. As a consequence, traffic in the Kelawea community funneled towards Komo Mai Street and Lahainaluna Road, causing traffic to slow heavily. At the time, officers therefore decided to help alleviate traffic by diverting it from Komo Mai Street to the Gateway shopping center.  These changes are implemented in PM Network 2.

Next, at approximately 16:29, the oil road at the intersection of Komo Mai Street and Keawe Street became passable. We added this road into the network with PM Network 3.

At approximately 16:35, the fire took over parts of Dickenson Street. We represent this change in PM Network 4 by removing the right part of Dickenson Street.

Lastly, at approximately 16:47, the southbound roads for Front Street and Hwy-30 were opened up, allowing for two additional exits at the bottom of the network along these roads. This change is reflected in PM Network 5.

\FloatBarrier
\subsubsection{Phase 1: Evaluating Optimization Parameters for AM Base}\label{sec:phase-1}
In order to determine what hyperparameters of $\gamma \in (0, 1]$ and $nt_{opt}$ to choose 
for the final simulations, we conduct preliminary studies on the AM Base network, simulating 
it for its full time frame of 8,700 seconds (2 hours and 25 minutes). Since all source roads in the AM Base network have normalized capacity densities 
of either $\sigma = 0.075$ or $\sigma = 0.125$, we consider $\gamma \in \{0.01, 
0.0375, 0.075, 0.125, 1.00\}$, representing the extreme cases of no congestion 
and fully congested source roads, as well as densities below, near, and above 
capacity. 
For $nt_{opt}$, we consider $\{0, 1, 10, 60, 600\}$ seconds, representing 
no optimization ($nt_{opt} = 0$), short-term planning at the second timescale 
($nt_{opt} = 1, 10$), and longer-term planning at the minute timescale 
($nt_{opt} = 60, 600$). While ideally controllers would optimize over the entire 
simulation, accurate long-range prediction is infeasible in practice, as a traffic network is highly complex, and it is impossible to accurately model behavior very far in advance using only the current and previous data, motivating 
this restricted set. All other relevant parameters used for numerical implementation are detailed in Appendix \ref{appendix:num_impl}.

The weighted time-integrated cars across all combinations are summarized in 
Table~\ref{tab:phase1_summary}. For large $\gamma$ values ($0.075, 0.125, 1.00$), the network 
completely floods by the end of the simulation, and optimization provides only marginal 
improvement. For smaller $\gamma$ values ($0.01, 0.0375$), the optimization has a more 
pronounced effect, with increasing $nt_{opt}$ generally improving network efficiency as 
measured by cars exited. Snapshots illustrating the flooding behavior for $\gamma = 0.075$ 
are shown in Figure~\ref{fig:gamma_0.075_snapshots}; a detailed discussion  and per-$\gamma$ tables and 
snapshot comparisons across $nt_{opt}$ values are provided in Appendix~\ref{appendix:num_impl}.

\begin{table}[htbp]
\centering
\begin{tabular}{lccccc}
\toprule
$nt_{opt}$ & $\gamma = 0.01$ & $\gamma = 0.0375$ & $\gamma = 0.075$ & $\gamma = 0.125$ & $\gamma = 1.00$ \\
\midrule
0   & 747.73   & 6,365.52 & 7,754.90 & 7,857.96 & 7,874.50 \\
1   & 728.50   & 6,485.01 & 7,755.87 & 7,858.34 & 7,874.90 \\
10  & 729.01   & 6,495.34 & 7,755.91 & 7,858.39 & 7,874.91 \\
60  & 735.51   & 6,591.61 & 7,756.04 & 7,858.42 & 7,874.99 \\
600 & 701.94   & 6,828.82 & 7,776.05 & 7,858.61 & 7,875.26 \\
\bottomrule
\end{tabular}
\caption{Weighted time-integrated cars by $nt_{opt}$ and $\gamma$ for Phase 1 (AM Base Network).
For $\gamma = 0.01$, the loss function is not monotone in $nt_{opt}$ due to the network being 
nearly empty; see Appendix~\ref{appendix:num_impl} for discussion.}
\label{tab:phase1_summary}
\end{table}

\begin{figure}[htbp]
     \subfloat[]{\includegraphics[width=0.3\textwidth]{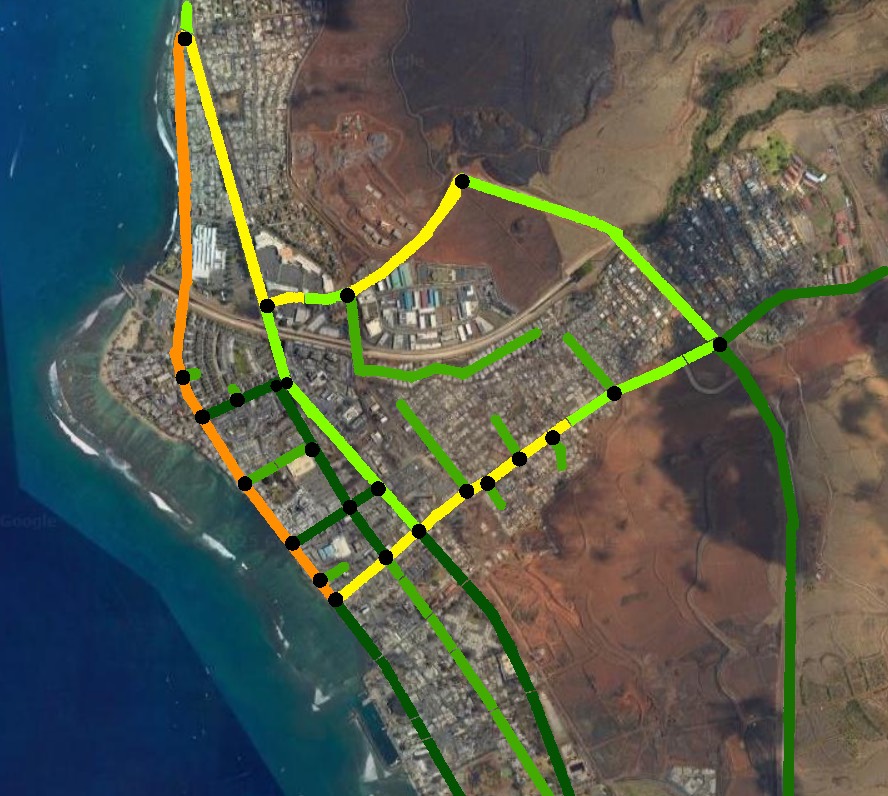}\label{fig:step0}}\hskip1ex
    \subfloat[]{\includegraphics[width=0.3\textwidth]{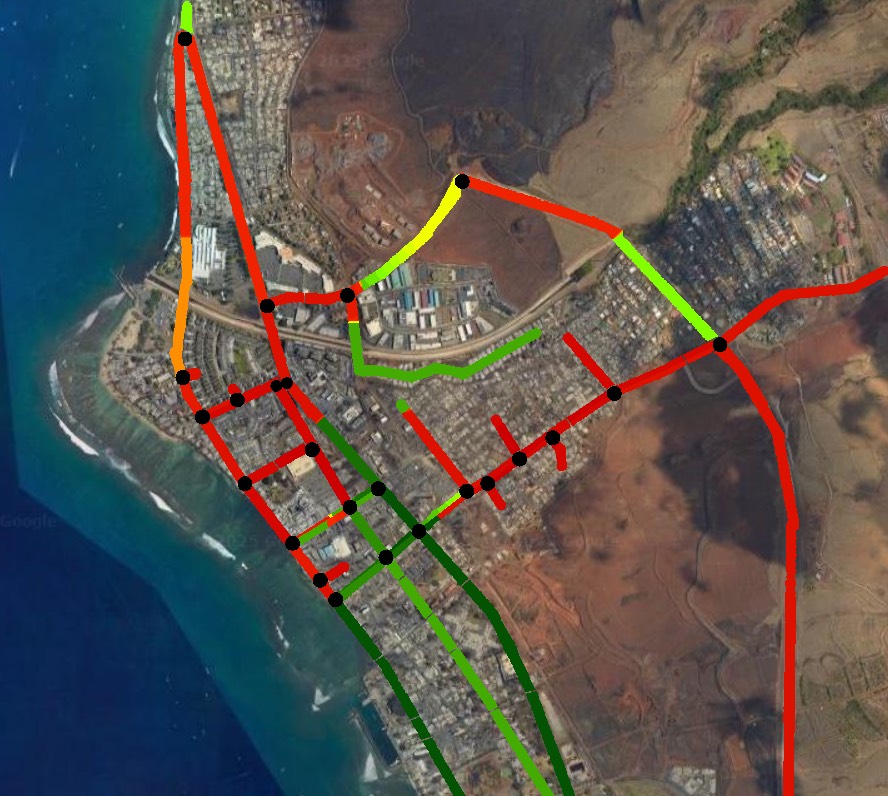}\label{fig:step30k}}\hskip1ex
    \subfloat[]{\includegraphics[width=0.3\textwidth]{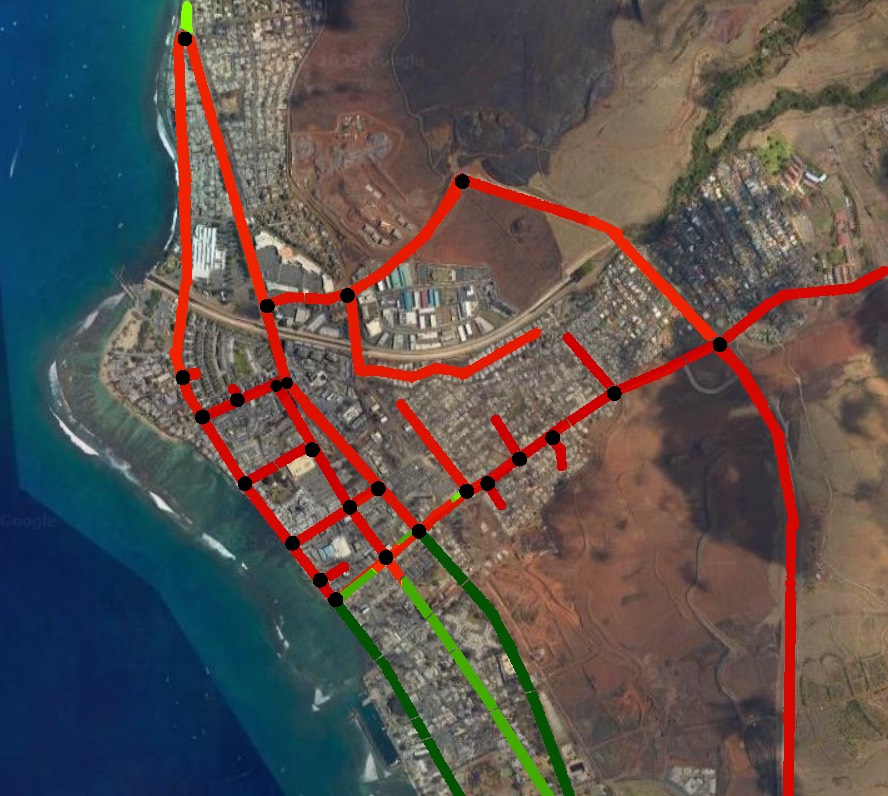}\label{fig:step60k}}\\
     \makebox[0.3\textwidth]{$t=0$}\hskip1ex
    \makebox[0.3\textwidth]{$t=900$}\hskip1ex
    \makebox[0.3\textwidth]{$t=1800$}
    \caption{Snapshots of the AM Base Network at $t=0, 900, 1800$ seconds for $\gamma = 0.075$ 
    with $nt_{opt}=0$. The network quickly floods, with all roads fully congested by $t=2550$.}
    \label{fig:gamma_0.075_snapshots}
\end{figure}

Based on these results, we draw two conclusions for the subsequent phases. First, since the 
impact of optimization is most pronounced for smaller $\gamma$, we restrict to 
$\gamma \in \{0.01, 0.0375\}$ for Phases 2 and 3. Second, while $nt_{opt} = 600$ technically 
achieves the best loss for most $\gamma$ values, the improvement over $nt_{opt} = 1$ is 
marginal, and the difference in computation time is substantial. We therefore fix $nt_{opt} 
\in \{0, 1, 60\}$ for Phase 2, and $nt_{opt} = 1$ for Phase 3.

\subsubsection{Phase 2: Simulating Initial Networks}\label{sec:phase-2} We first examine the effect of the parameters $\gamma, nt_{opt}$ on the network simulated for the initial evacuation period, where it can be assumed that those who evacuated in advance are equally distributed across all sources. This corresponds to the AM Base, AM 2, AM 3, and PM Base networks, simulated consecutively from 11:00  - 15:25. In order to ensure the simulation results are robust against the randomness introduced during optimization, we run the optimization for each network 10 times, and average the results. We note that averaging the final results of the simulations produces identical network behavior to first averaging the optimal parameters found from the 10 runs of the optimization, then running the simulation using the averaged parameters. Thus, the two methods are used interchangeably throughout Phase 2 and 3. 

Since the previous section showed that the impacts on the AM Base network were largest for smaller $\gamma$, we restrict our simulations to $\gamma = 0.01, 0.0375$. Furthermore, since the changes in traffic behavior were noticeable when comparing $nt_{opt} = 0, 1, 60$, we also simulate only these select values.

The cumulative optimization metrics across all the initial networks for $\gamma=0.01$ is given in Table \ref{tab:cum_ntopt_comparison_gamma0.01_0.0375_combined}. Similarly to when AM Base alone was simulated, the cumulative loss is highest for $nt_{opt}=0$, but the cumulative cars exited increases with increased $\gamma$, showing the improved efficiency of the optimized networks. This can also be seen in Figure \ref{fig:0.01_cum_diff}, which shows the cumulative difference between the cars entered for the $nt_{opt}$ values and the baseline of $nt_{opt}=0$ across all networks. It is also worth noting that the improvement in cars exiting with $nt_{opt}=1$ is most pronounced in the AM Base simulation, but the cumulative impact is only slightly lower than with $nt_{opt}=60$. 

\begin{table}[htbp]
\centering
\resizebox{\textwidth}{!}{
\begin{tabular}{lcccccc}
\toprule
& \multicolumn{3}{c}{$\gamma = 0.01$} & \multicolumn{3}{c}{$\gamma = 0.0375$}  \\
\cmidrule(lr){2-4} \cmidrule(lr){5-7}
$nt_{opt}$ & Weighted Time & Cars & Cars & Weighted Time & Cars & Cars \\
           & -Int Cars & Entered & Exited & -Int Cars & Entered & Exited \\
\midrule
0  & 1,221.03 & 3,530.33 & 3,781.12 & 12,391.88 & 9,730.70 & 8,834.23 \\
1  & 1,197.97 & 3,530.33 & 3,792.66 & 12,577.69 & 9,742.63 & 8,834.23 \\
60 & 1,197.97 & 3,530.33 & 3,795.40 & 12,627.03 & 9,741.70 & 8,834.23 \\
\bottomrule
\end{tabular}}
\caption{Cumulative Optimization Metrics ($\gamma = 0.01, 0.0375$).}
\label{tab:cum_ntopt_comparison_gamma0.01_0.0375_combined}
\end{table}

\begin{figure}[htbp]
    \subfloat[$\gamma = 0.01$]{\includegraphics[width=0.49\textwidth]{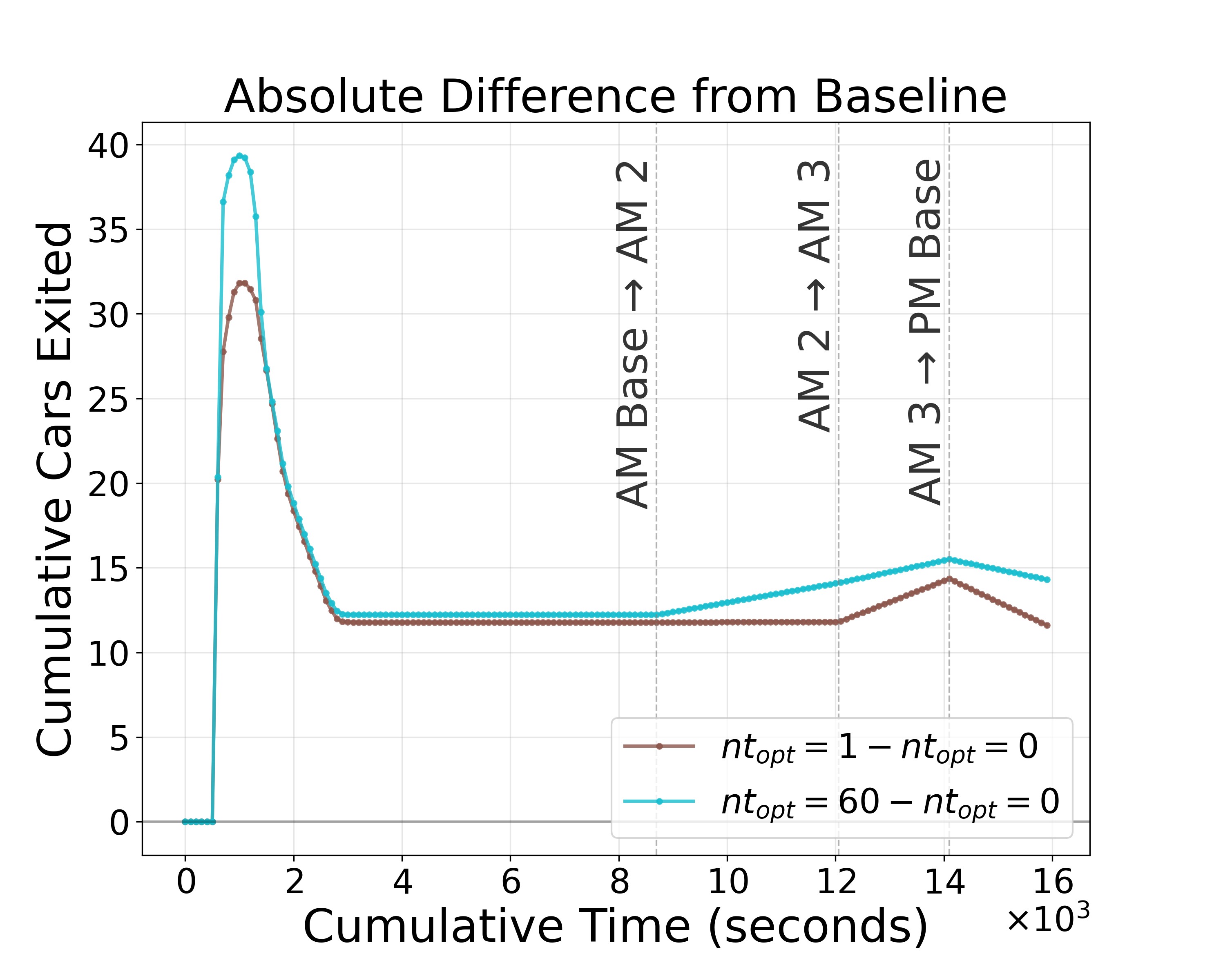}\label{fig:0.01_cum_diff}}\hskip1ex
    \subfloat[$\gamma = 0.0375$]{\includegraphics[width=0.48\textwidth]{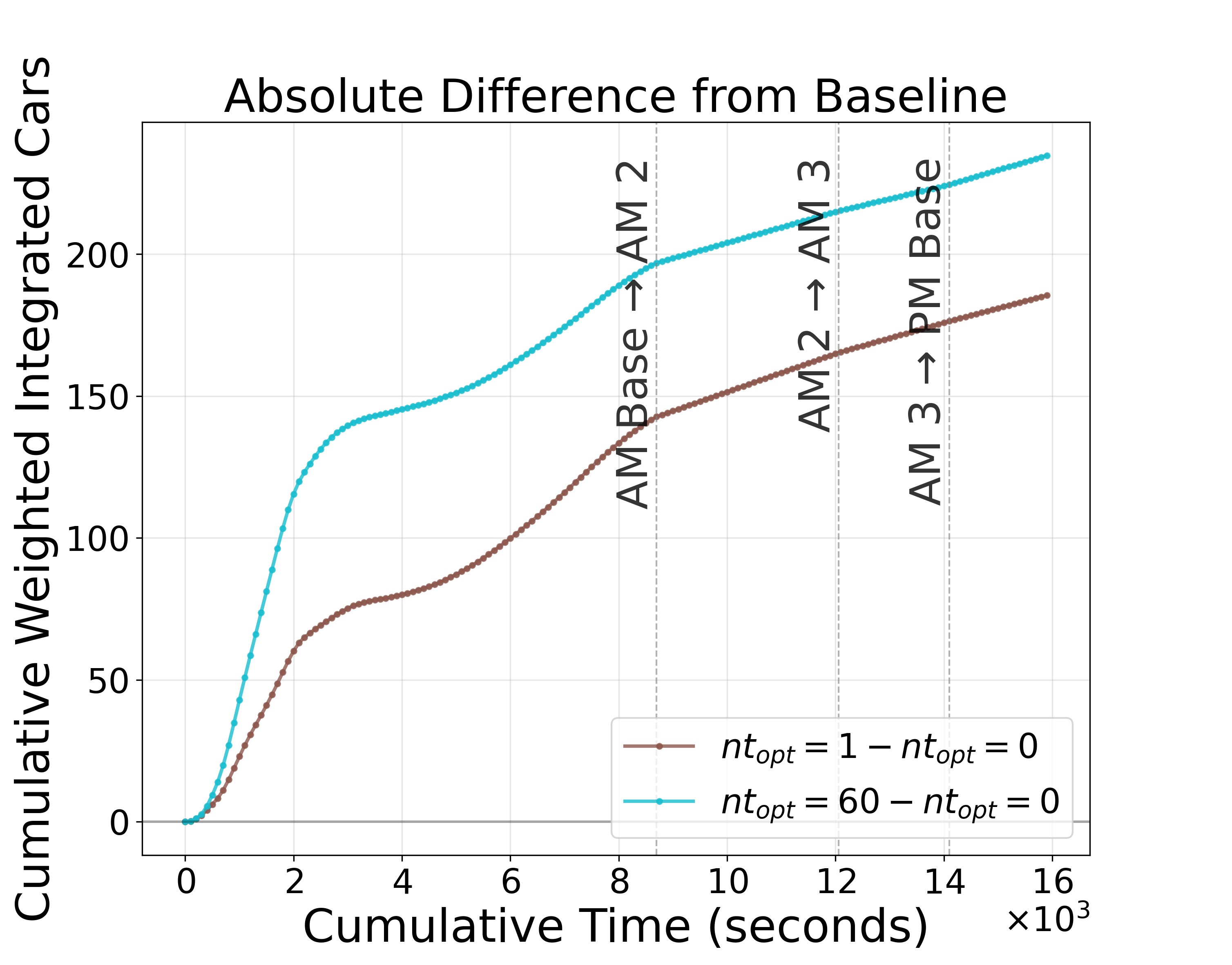}\label{fig:0.0375_cum_diff}}
    \caption{Cumulative absolute difference between $nt_{opt} = 0$ and $nt_{opt} = 1, 60$. (A) Difference in cars exited ($\gamma = 0.01$). (B) Difference in weighted integrated cars ($\gamma = 0.0375$). Time is given in thousands of seconds.}
    \label{fig:cum_diff_combined}
\end{figure}





For $\gamma = 0.0375$, the cumulative results are provided in Table \ref{tab:cum_ntopt_comparison_gamma0.01_0.0375_combined}. Again, despite the additional averaging steps and the simulation across all networks rather than just AM Base, the cumulative results are similar to those in Phase 1. Namely, the weighted time-integrated cars increases with increasing $nt_{opt}$, where the cumulative differences are provided in Figure \ref{fig:0.0375_cum_diff}. Also, the number of cars entered is larger for nonzero $nt_{opt}$, but is not directly increasing, which was also seen originally in Phase 1. Finally, the improvement in the weighted loss function between $nt_{opt} = 0$ and $nt_{opt}=1$ is fairly substantial, while the improvement between $nt_{opt} = 1$ and $nt_{opt} = 60$ is not as large, even though there is a significant difference in computation time between optimizing over one second vs one minute.



\subsubsection{Phase 3: Effect of Congestion Parameters on Afternoon Networks}\label{sec:phase-3} At approximately 15:25, the fire spread eastwards, becoming very close to the residential areas. Thus, we assume that the source roads near the fire become significantly congested as many people simultaneously evacuate these areas. We denote the level of congestion for these sources as $\gamma_2$, and relabel the other source roads as having a congestion level of $\gamma_1$. We examine $\gamma_2 \in [\gamma_1, 1]$ for both $\gamma_1 = 0.01, 0.0375$, since the density of residents attempting to evacuate from the east should be greater than or equal to the density in western Lahaina. 

For simplicity, we consider only $nt_{opt} = 1$ for all simulations, which corresponds to drivers making decisions based on what will optimize the network near-instantaneously (over the next second). This kind of decision-making aligns with a wildfire evacuation scenario, as drivers are unable to plan ahead very much compared to day-to-day traffic behavior, or other evacuation scenarios where evacuations can begin several days before (e.g. hurricane evacuations). This choice in $nt_{opt}$ is also supported by the results in Phase 1 and 2, where $nt_{opt}=1$ seemed to provide the best balance of improved network efficiency and computation time.

In Figure \ref{fig:gamma1_0.01_cum}, it can be seen that for low $\gamma_1 = 0.01$, the cumulative loss function of time-integrated cars on roads weighted by distance to the exit increases as $\gamma_2$ increases. This is due to the network remaining relatively uncongested for all time frames when $\gamma_2$ is also low. However, when $\gamma_2$ is high, even with the low $\gamma_1$, the final network is very congested, with more of HWY-30 becoming jammed, which can be seen in Figure \ref{fig:gamma1_0.01_finalsnapshots}. Interestingly, for all choices in $\gamma_2$, the optimization funnels more cars towards the southbound exits that were opened at the start of PM 5. This is likely because the congestion is concentrated towards the Eastern residential areas, and this exit is closer for evacuating citizens from this area.
\begin{figure}[htbp]
    \centering
    \includegraphics[width=0.75\textwidth]{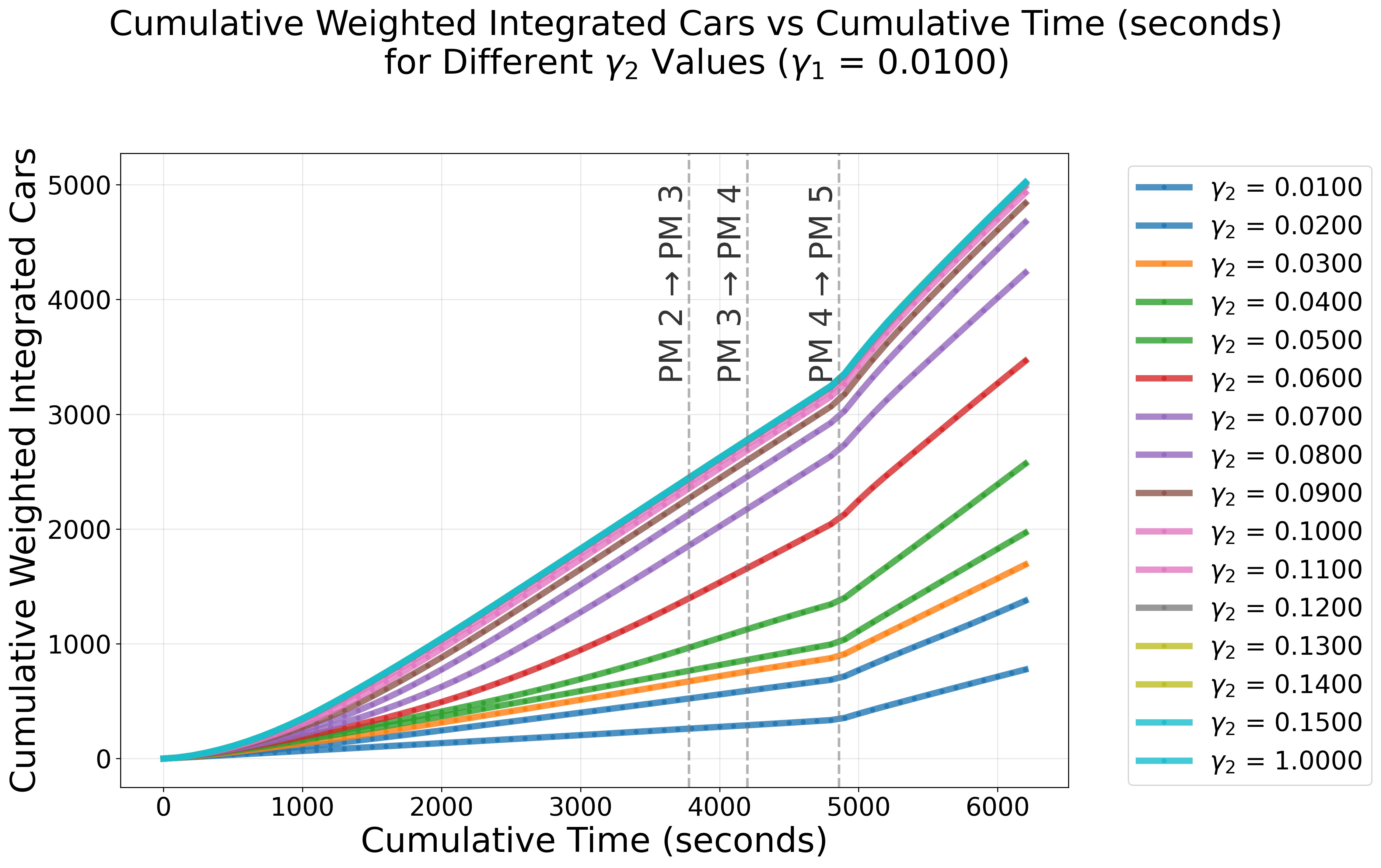}
     \caption{Cumulative time integrated cars for various $\gamma_2$ values with $\gamma_1 = 0.01$.}
    \label{fig:gamma1_0.01_cum}
\end{figure}

\begin{figure}[htbp]
     \centering
     \subfloat[]{\includegraphics[width=0.3\textwidth]{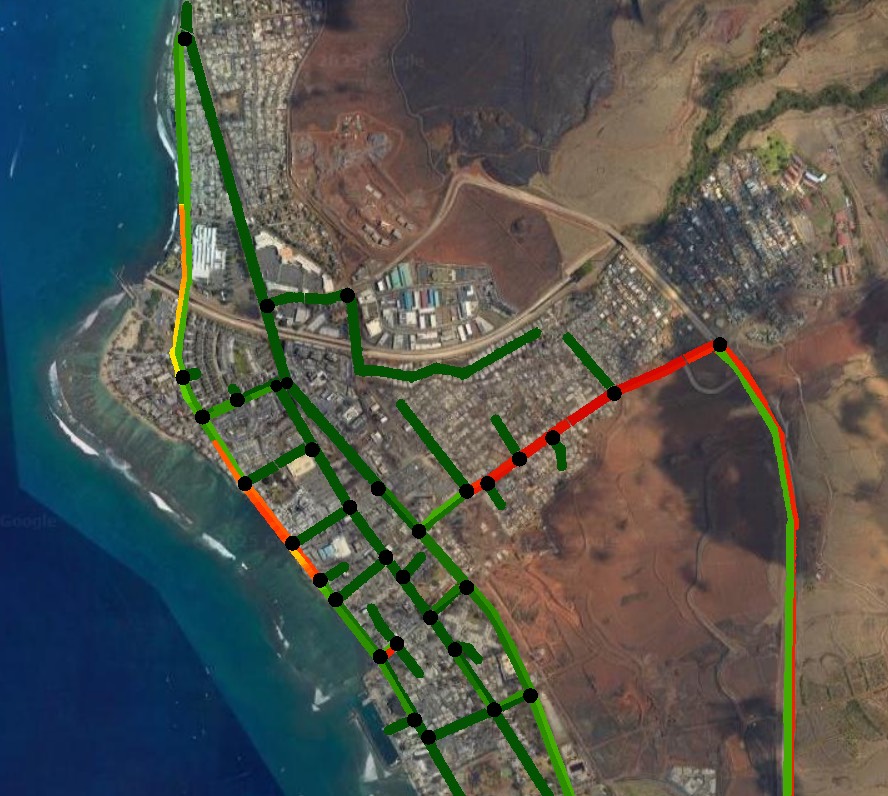}}\hskip1ex
    \subfloat[]{\includegraphics[width=0.3\textwidth]{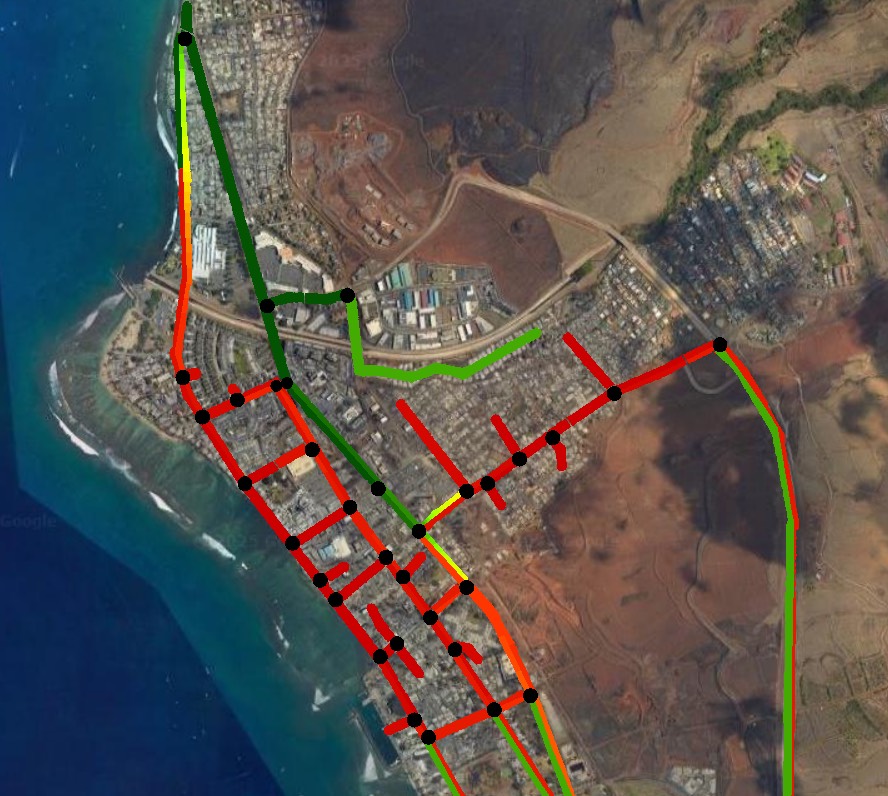}}\hskip1ex
    \subfloat[]{\includegraphics[width=0.3\textwidth]{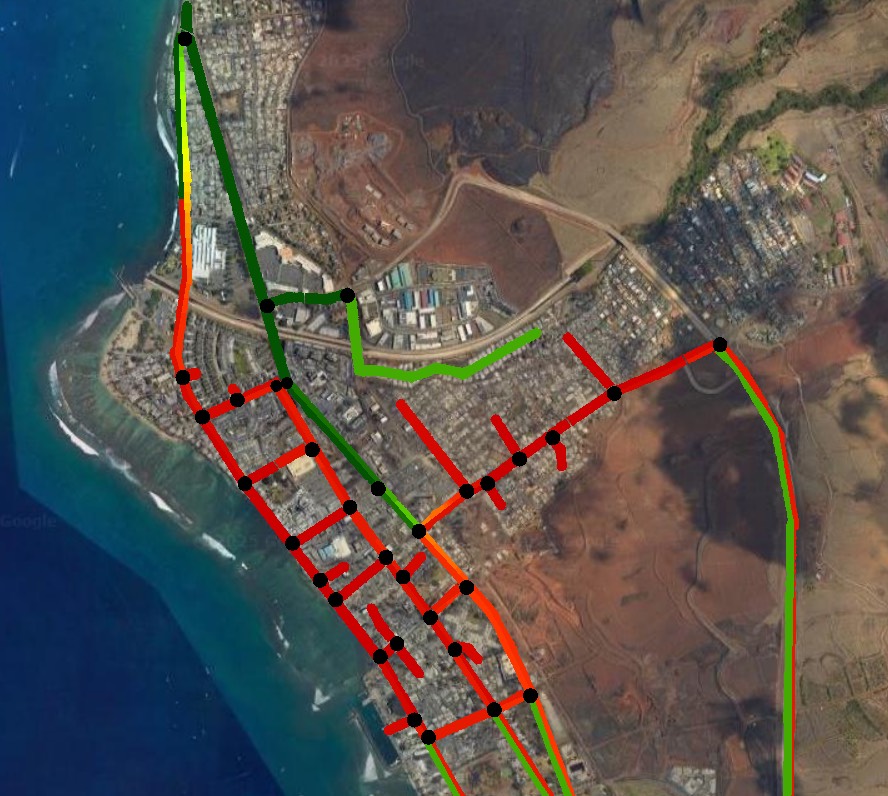}}\\
    \makebox[0.3\textwidth]{$\gamma_2 = 0.01$}\hskip1ex
    \makebox[0.3\textwidth]{$\gamma_2 = 0.10$}\hskip1ex
    \makebox[0.3\textwidth]{$\gamma_2 = 1.00$}
        \caption{The final network state (end of PM 5) for $\gamma_1 = 0.01$, with $\gamma_2 = 0.01, 0.10,$ and $1.00$. }
        \label{fig:gamma1_0.01_finalsnapshots}
\end{figure}

The cumulative loss function for $\gamma_1 = 0.0375$ is shown in Figure \ref{fig:gamma1_0.0375_cum}. Again, it is clear that larger $\gamma_2$ values result in a higher final cumulative loss value, due to the increased demand of the corresponding networks. However, the differences are much lower than those for $\gamma_1 = 0.01$, as the network is congested for all $\gamma_2$ values, which is shown in Figure \ref{fig:gamma1_0.0375_finalsnapshots}. It is noteworthy that even in the case where $\gamma_2 = \gamma_1 = 0.0375$, the optimization still results in congestion being more concentrated towards the southbound HWY-30 exit, similarly to when $\gamma_1 = 0.01$.
\begin{figure}[htbp]
    \centering
    \includegraphics[width=0.75\textwidth]{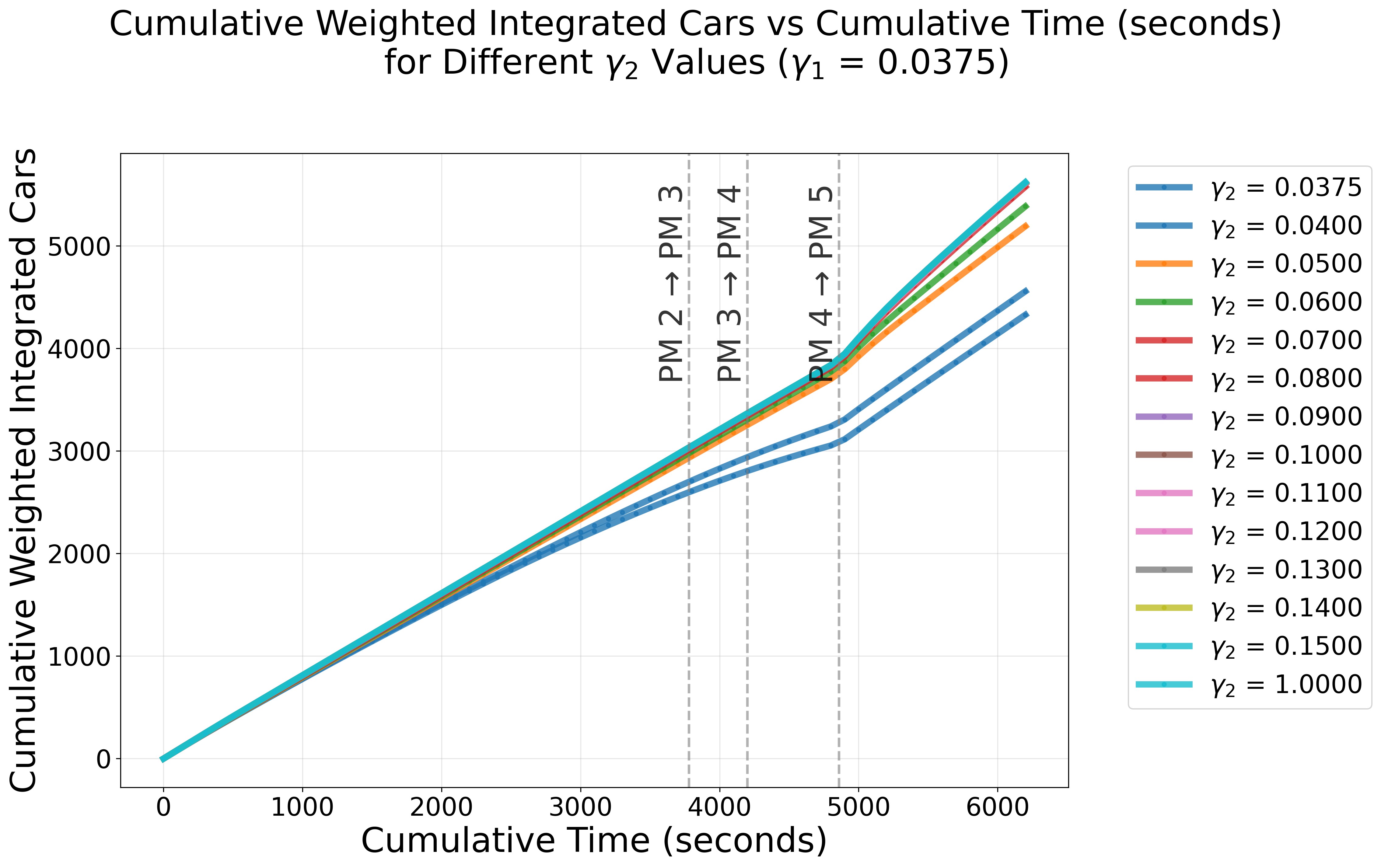}
     \caption{Cumulative time integrated cars for various $\gamma_2$ values with $\gamma_1 = 0.0375$.}
    \label{fig:gamma1_0.0375_cum}
\end{figure}

\begin{figure}[htbp]
     \centering
     \subfloat[]{\includegraphics[width=0.3\textwidth]{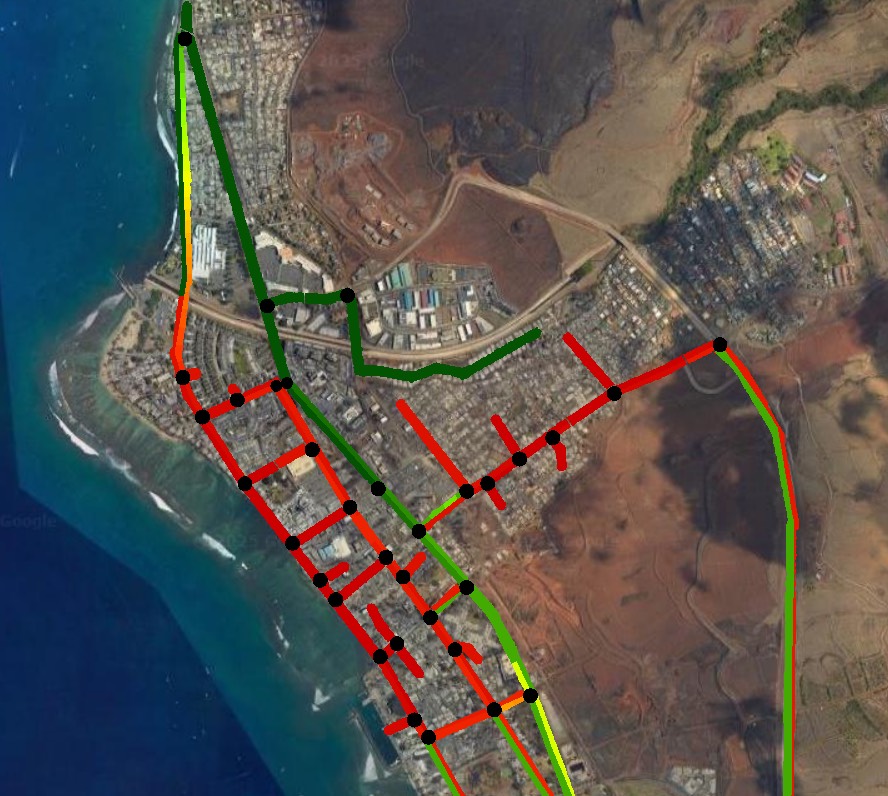}}\hskip1ex
    \subfloat[]{\includegraphics[width=0.3\textwidth]{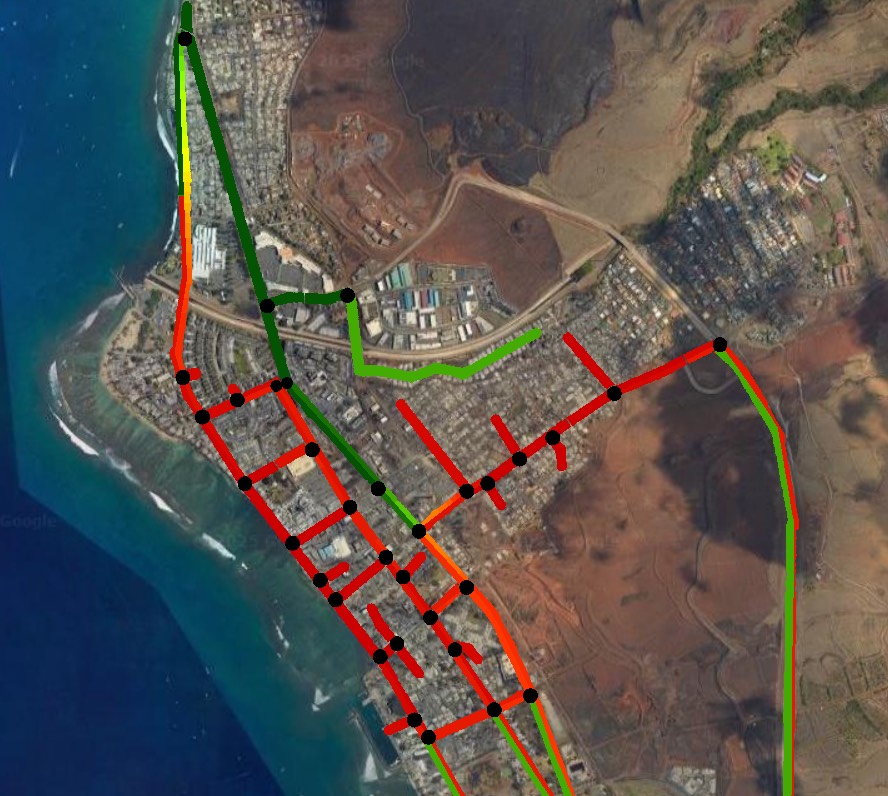}}\hskip1ex
    \subfloat[]{\includegraphics[width=0.3\textwidth]{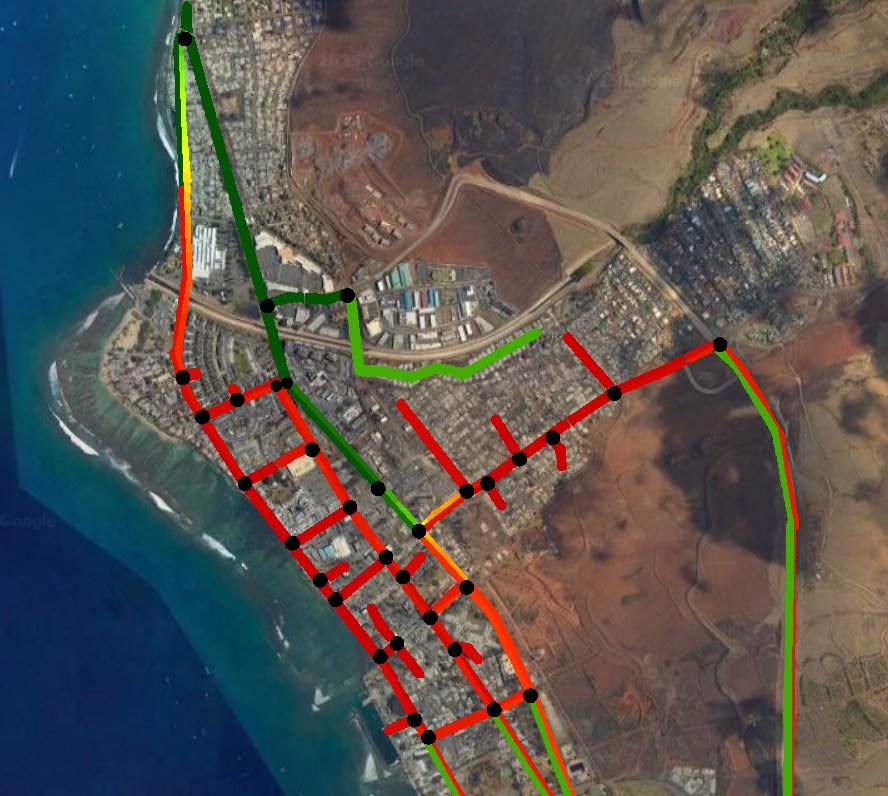}}\\
     \makebox[0.3\textwidth]{$\gamma_2 = 0.0375$}\hskip1ex
    \makebox[0.3\textwidth]{$\gamma_2 = 0.10$}\hskip1ex
    \makebox[0.3\textwidth]{$\gamma_2 = 1.00$}
        \caption{The final network state (end of PM 5) for $\gamma_1 = 0.0375$, with $\gamma_2 = 0.0375, 0.10,$ and $1.00$. }
        \label{fig:gamma1_0.0375_finalsnapshots}
\end{figure}
\subsubsection{Phase 4: Reversing Lanes on Northern Exit}\label{sec:phase-4}
It has been shown through the previous phases that the closure of the Lahaina Bypass, and the opening of the southbound exit on Honoapi'ilani Highway both had major impacts on the congestion in the network. To improve the performance of the network in between these events (corresponding to networks PM 2 through PM 4), one idea is to reverse the southbound lanes on the northern exit (this corresponds to the road segment labeled Hwy30[7] in all simulations/tables). This effectively increases the capacity of the exit road, allowing more cars to evacuate. This is a practical implementation for evacuations, as it would not require the construction of any additional roads. Furthermore, it can be assumed that no pedestrian vehicles will enter the network utilizing the reversed lanes, since cars that have evacuated safely would not re-enter until the fire has been contained. Thus, we examine the impact of the reversal of these lanes on the performance of PM networks 2,3, and 4.

We fix $\gamma_1 = 0.0375$ and $nt_{opt} = 1$, and examine the uncongested and congested regimes given by $\gamma_2 = 0.0375$ and $\gamma_2 = 0.1000$. We consider the reversal of one or both of the southbound lanes, corresponding to a total of three or four exit lanes. 

For the uncongested regime ($\gamma_2 = 0.0375$), the cumulative difference in the number of cars exited between the standard exit with 2 lanes and the exits with increased capacity is shown in Figure \ref{fig:gamma2_0.0375_cars_exit}, while snapshots of the final network at the end of PM 4 for the different lane values are given in \ref{fig:gamma2_0.0375_final_snapshots}. It can be seen that across all networks, the difference is positive for both 3 lanes and 4 lanes, with a final value of approximately 140 more cars escaping for 3 lanes, and 147 more cars escaping for 4 lanes. Both the 3 lanes and 4 lanes configurations show noticeably less congestion near the exit junction at the end of the simulations. Note that the two configurations have very similar dynamics, with only a slight difference in the final number of additional cars that were able to escape.

\begin{figure}[htbp]
    \centering\includegraphics[width=0.75\linewidth]{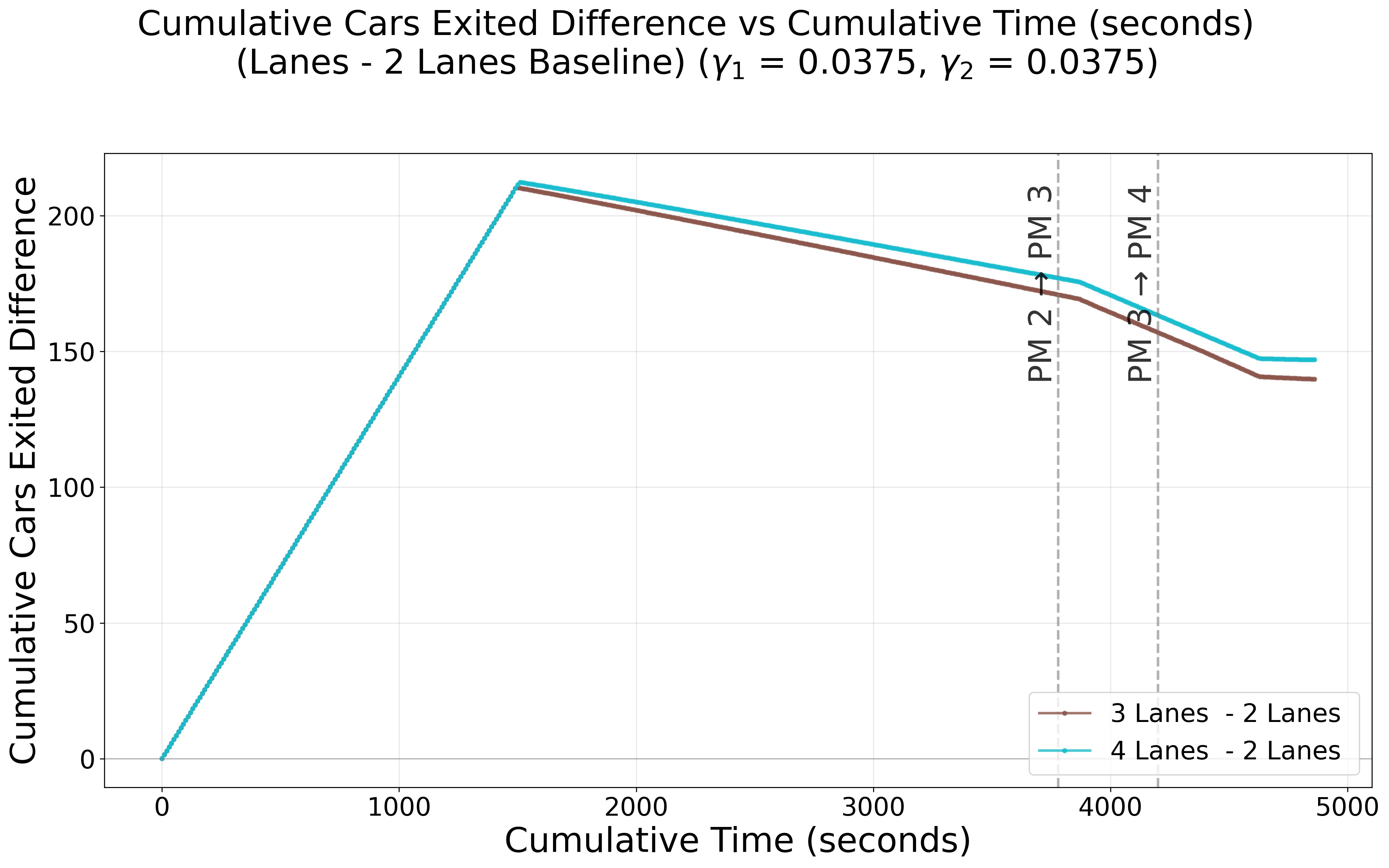}
    \caption{The cumulative difference in cars exited between 3 and 4 exit lanes and 2 lanes across PM networks 2, 3, and 4 for $\gamma_1 = 0.0375$, $\gamma_2 = 0.0375$.}
    \label{fig:gamma2_0.0375_cars_exit}
\end{figure}

\begin{figure}[htbp]
     \centering
     \subfloat[]{\includegraphics[width=0.3\textwidth]{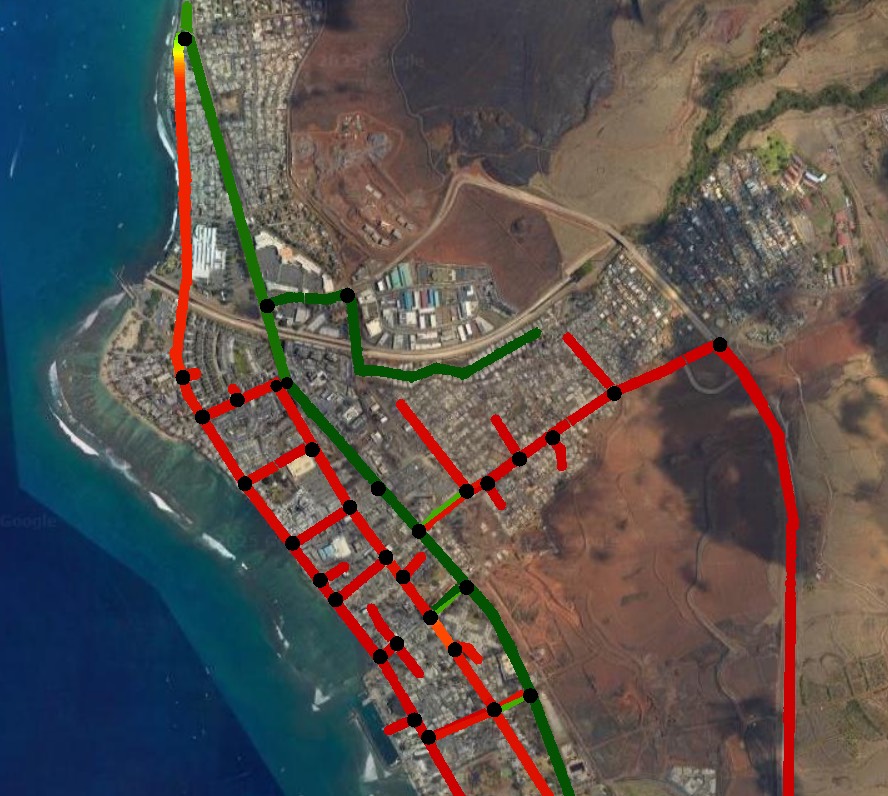}}\hskip1ex
    \subfloat[]{\includegraphics[width=0.3\textwidth]{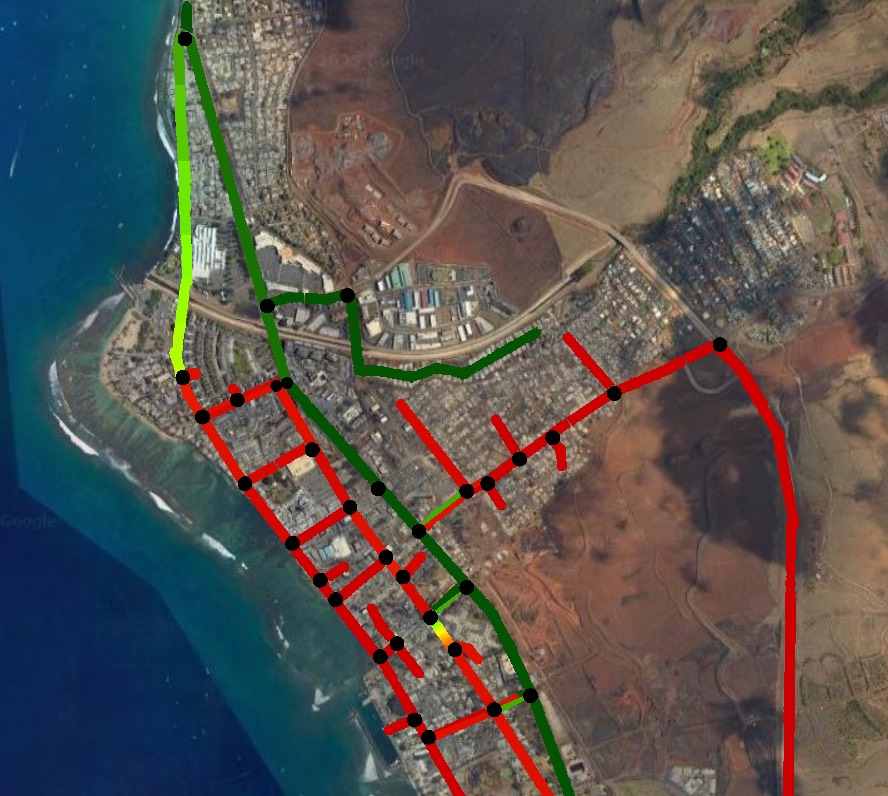}}\hskip1ex
    \subfloat[]{\includegraphics[width=0.3\textwidth]{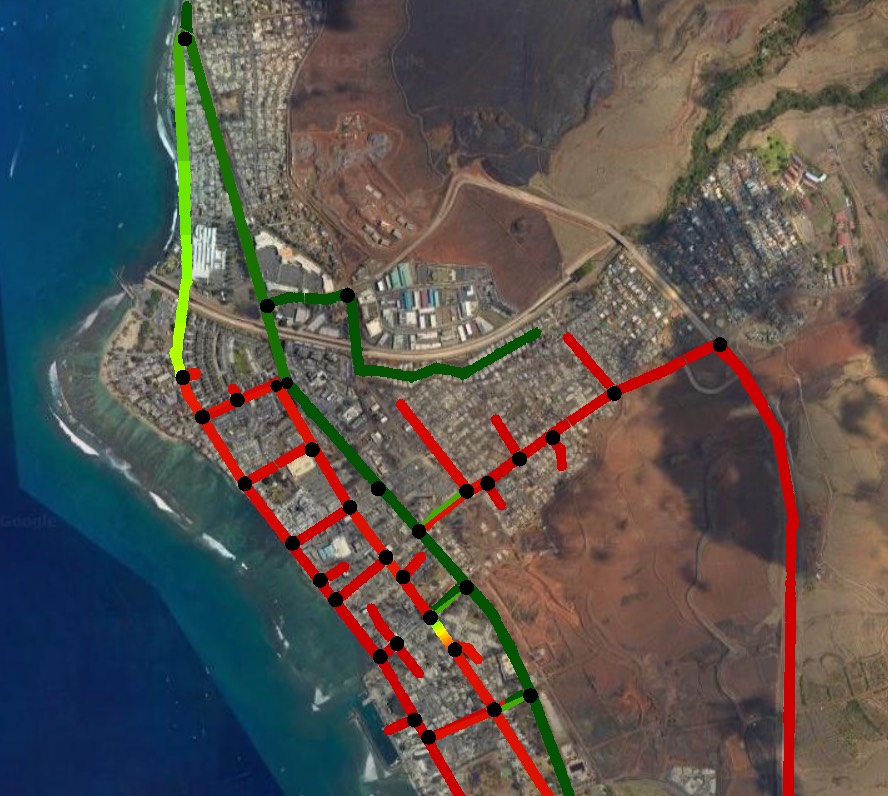}}\\
     \makebox[0.3\textwidth]{2 lanes}\hskip1ex
    \makebox[0.3\textwidth]{3 lanes}\hskip1ex
    \makebox[0.3\textwidth]{4 lanes}
        \caption{Final snapshots of PM 4 for 2,3, and 4 exit lanes with $\gamma_1 = 0.0375$, $\gamma_2 = 0.0375$. }
        \label{fig:gamma2_0.0375_final_snapshots}
\end{figure}

There are two notable phase transition points in Figure \ref{fig:gamma2_0.0375_cars_exit}, one at approximately 1,500 seconds during PM 2, and another at 4,400 seconds during PM 4. These transition points correspond to when Hwy-30 fully clears, resulting in a drop in the incoming flux at the exit junction, which causes the exit road to be very uncongested, and thus a drop in the rate of cars exiting. This occurs during PM 2 at the first transition point for the 3-lane and 4-lane configurations, which can be seen in the snapshots provided in Figure \ref{fig:gamma2_0.0375_3lane_snapshots}. Meanwhile, due to the decreased efficiency, this clearing does not occur for the 2-lane setting until the second transition point approximately 400 seconds into PM 4, which can be seen in Figure \ref{fig:gamma2_0.0375_2lane_snapshots}. 

Thus, in between the transition points, the configurations with additional exit lanes have a lower rate of cars exiting because they are less congested, resulting in the decrease in the difference of cumulative cars seen in the plot. Then, after the second transition point, since all versions have similar low rates of cars exiting, the difference plateaus until the end of the simulation.

\begin{figure}[htbp]
     \centering
     \subfloat[]{\includegraphics[width=0.3\textwidth]{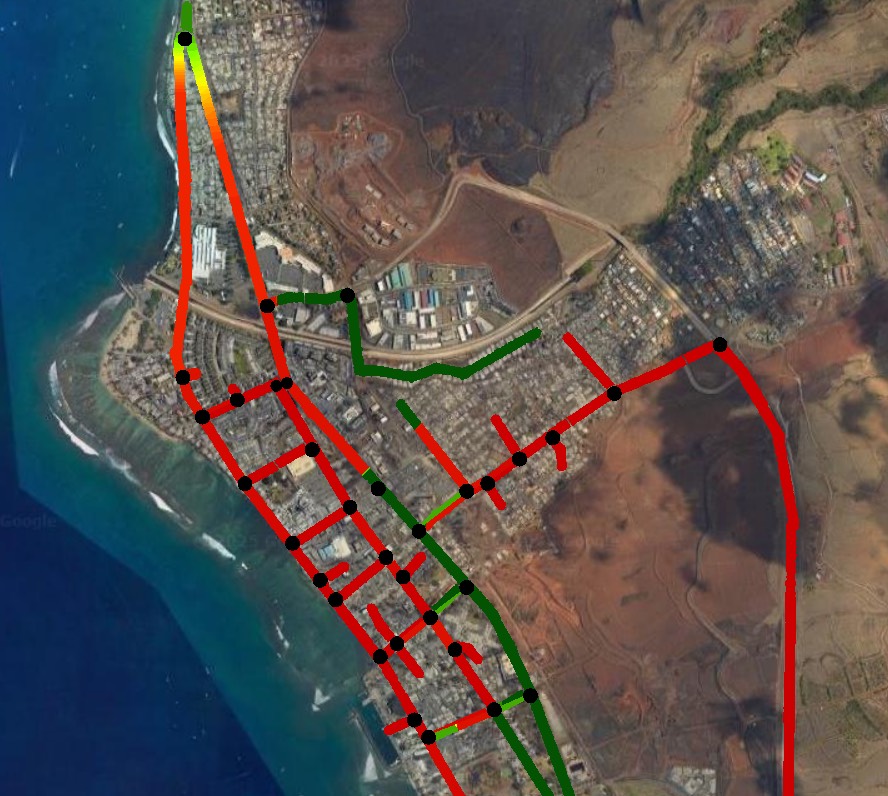}}\hskip1ex
    \subfloat[]{\includegraphics[width=0.3\textwidth]{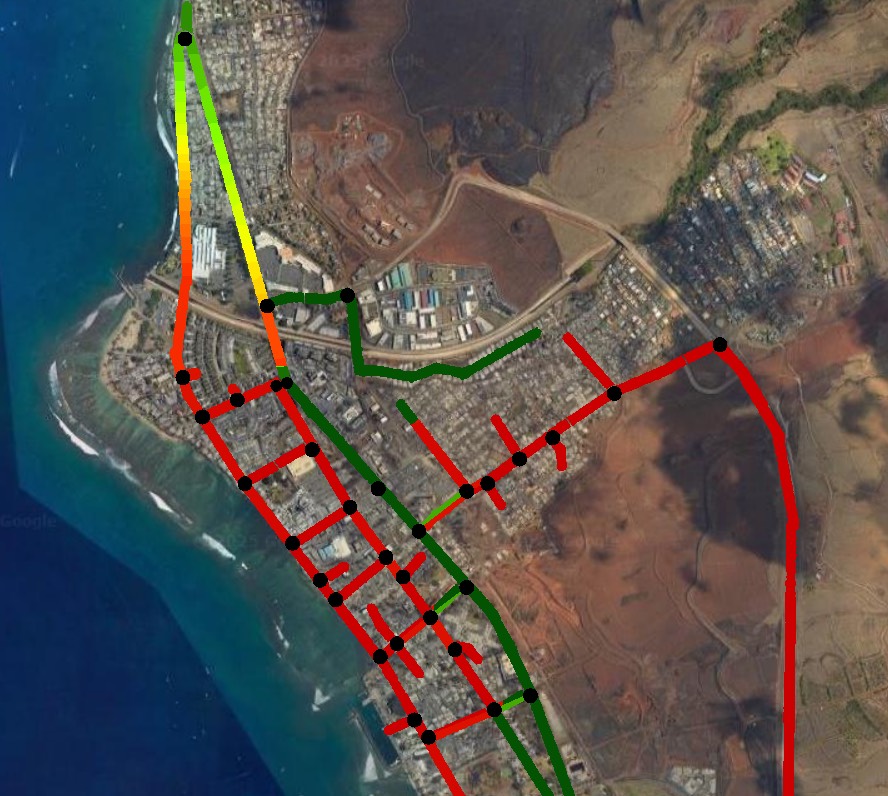}}\hskip1ex
    \subfloat[]{\includegraphics[width=0.3\textwidth]{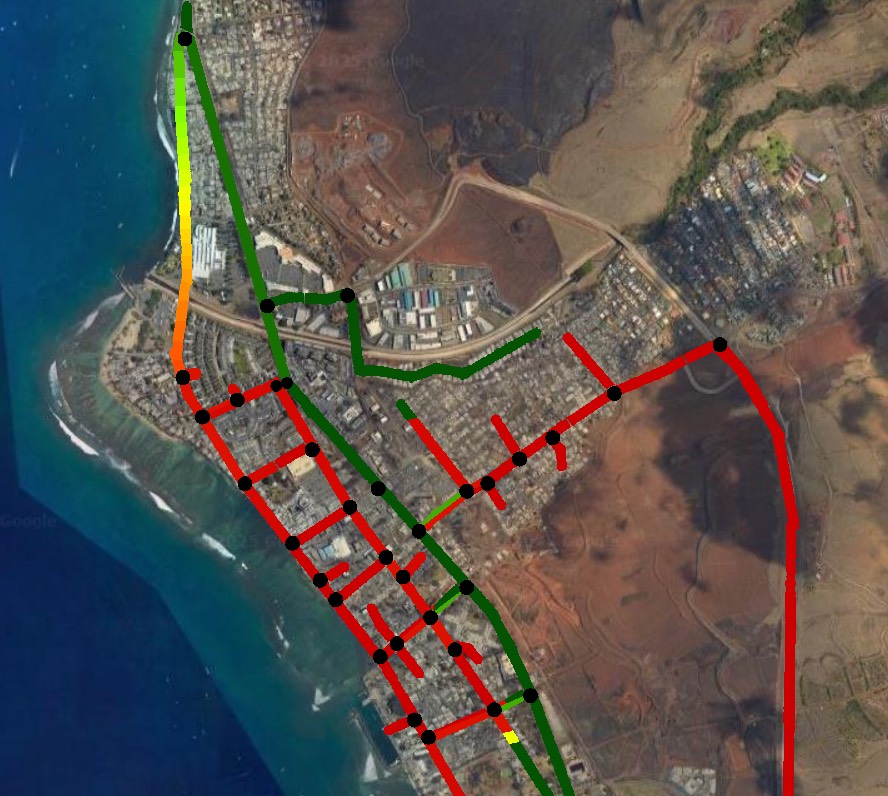}}\\
     \makebox[0.3\textwidth]{$t = 300$}\hskip1ex
    \makebox[0.3\textwidth]{$t = 1200$}\hskip1ex
    \makebox[0.3\textwidth]{$t = 1800$}
        \caption{Snapshots of PM 2 for 3 exit lanes with $\gamma_1 = 0.0375$, $\gamma_2 = 0.0375$.}
        \label{fig:gamma2_0.0375_3lane_snapshots}
\end{figure}

\begin{figure}[htbp]
     \centering
     \subfloat[]{\includegraphics[width=0.3\textwidth]{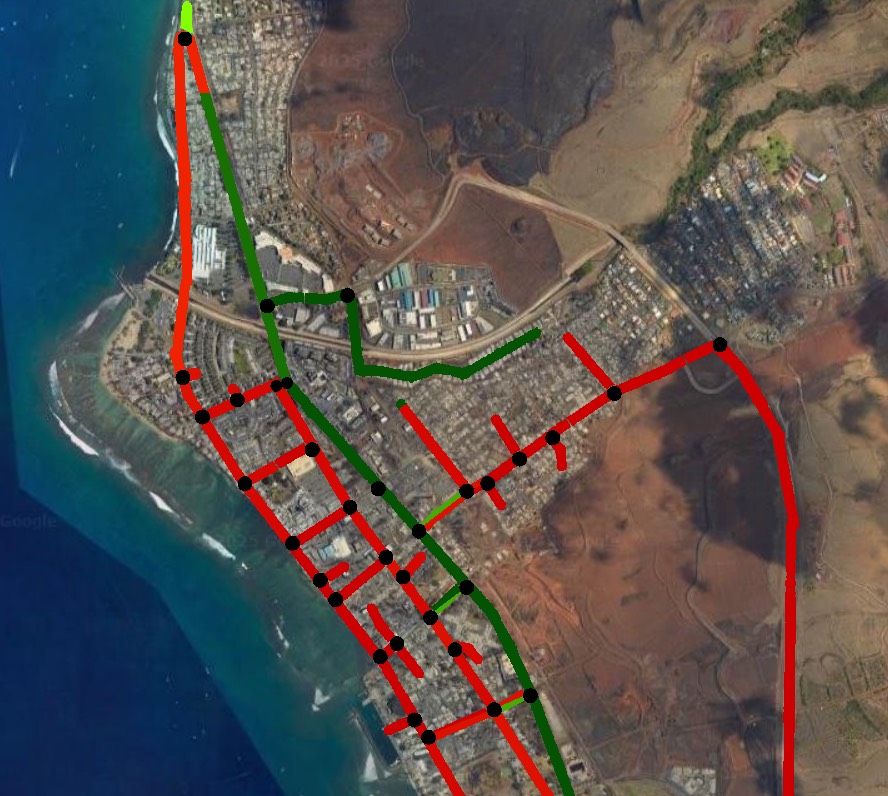}}\hskip1ex
    \subfloat[]{\includegraphics[width=0.3\textwidth]{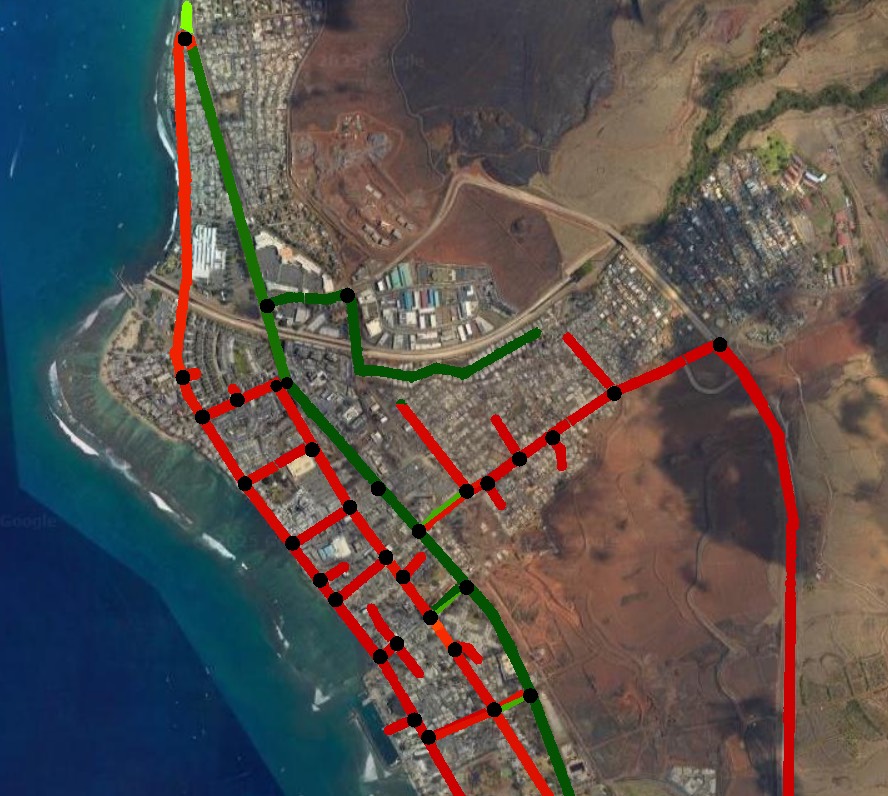}}\hskip1ex
    \subfloat[]{\includegraphics[width=0.3\textwidth]{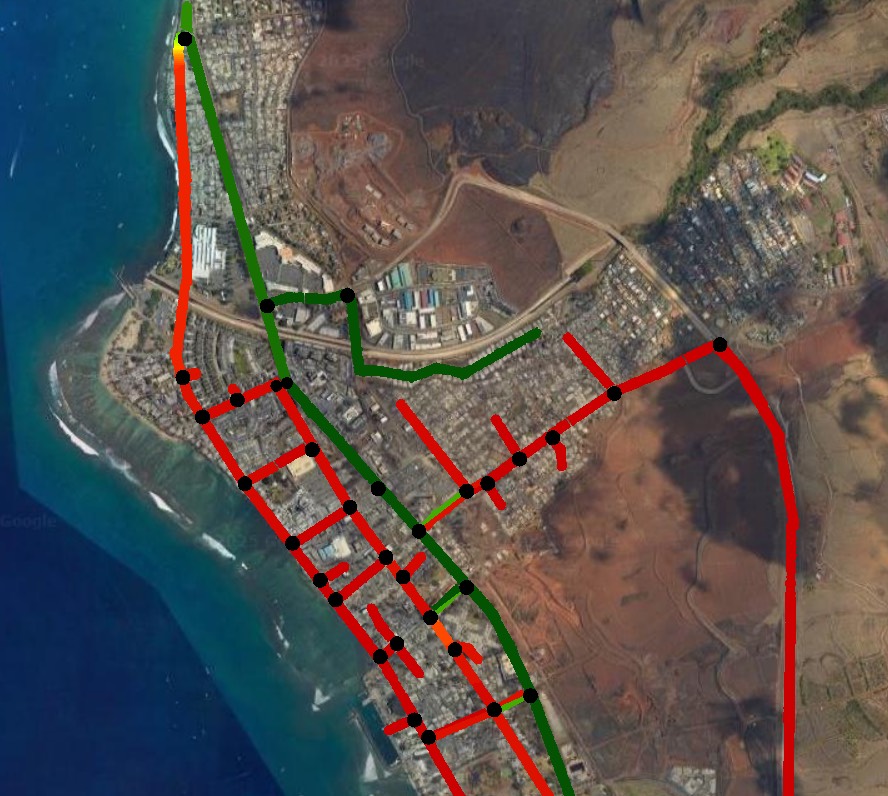}}\\
     \makebox[0.3\textwidth]{$t = 0$}\hskip1ex
    \makebox[0.3\textwidth]{$t = 300$}\hskip1ex
    \makebox[0.3\textwidth]{$t = 600$}
        \caption{Snapshots of PM 4 for 2 exit lanes with $\gamma_1 = 0.0375$, $\gamma_2 = 0.0375$. }
        \label{fig:gamma2_0.0375_2lane_snapshots}
\end{figure}
For the congested regime ($\gamma_2 = 0.1000$), the cumulative difference in the number of cars exited between the standard exit with 2 lanes and the exits with increased capacity is shown in Figure \ref{fig:gamma2_0.1000_cars_exit}, while snapshots of the final network at the end of PM 4 for the different lane values are given in \ref{fig:gamma2_0.1000_final_snapshots}.

\begin{figure}[htbp]    \centering\includegraphics[width=0.75\linewidth]{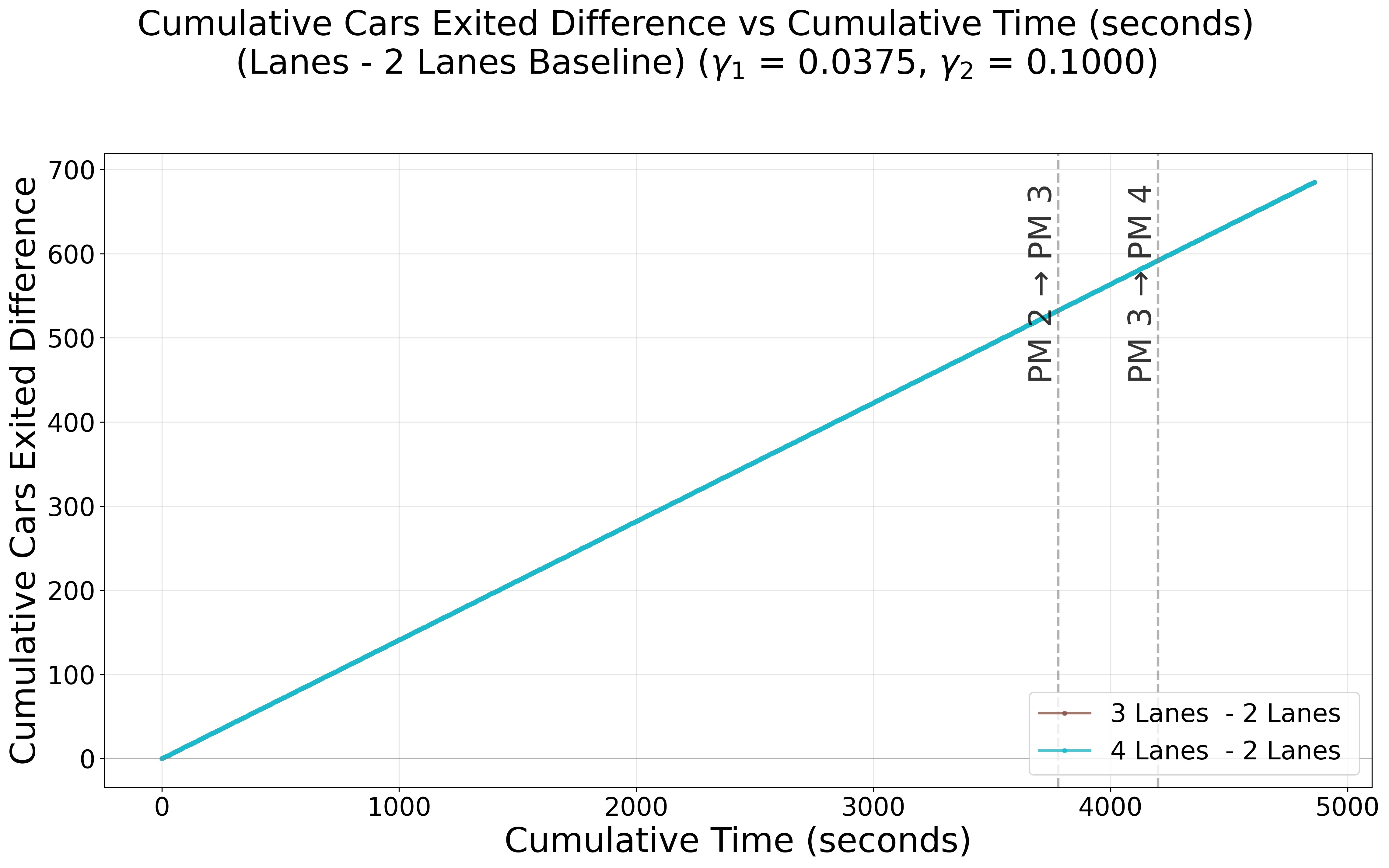}
    \caption{The cumulative difference in cars exited between 3 and 4 exit lanes and 2 lanes across PM networks 2, 3, and 4 for $\gamma_1 = 0.0375$, $\gamma_2 = 0.1000$.}
    \label{fig:gamma2_0.1000_cars_exit}
\end{figure}

\begin{figure}[htbp]
     \centering
     \subfloat[]{\includegraphics[width=0.3\textwidth]{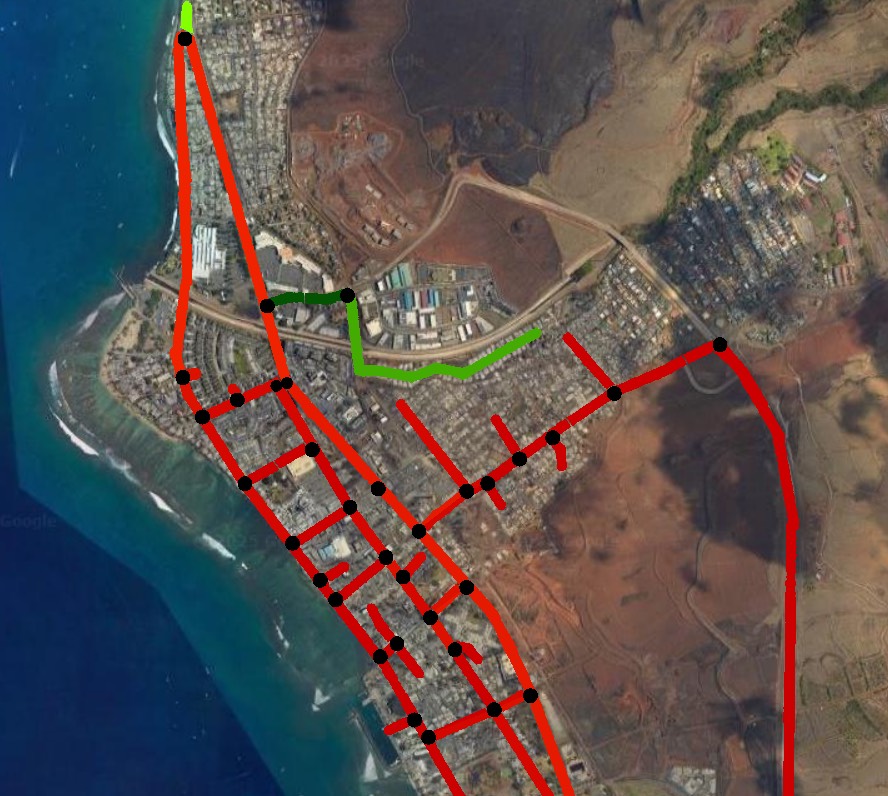}}\hskip1ex
    \subfloat[]{\includegraphics[width=0.3\textwidth]{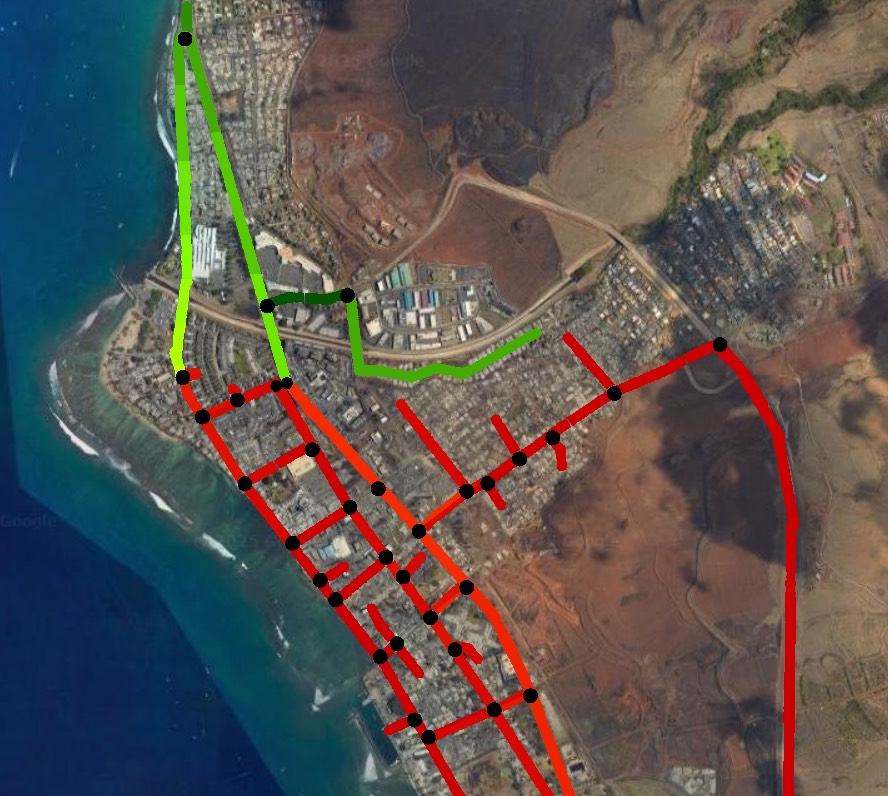}}\hskip1ex
    \subfloat[]{\includegraphics[width=0.3\textwidth]{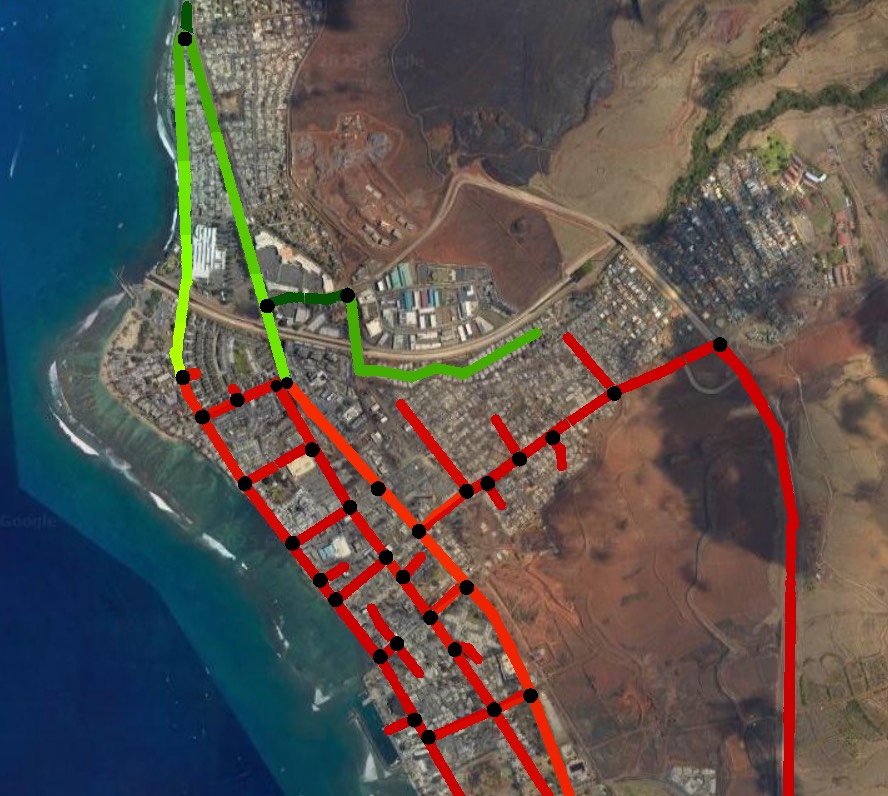}}\\
     \makebox[0.3\textwidth]{2 lanes}\hskip1ex
    \makebox[0.3\textwidth]{3 lanes}\hskip1ex
    \makebox[0.3\textwidth]{4 lanes}
        \caption{Final snapshots of PM 4 for 2,3, and 4 exit lanes with $\gamma_1 = 0.0375$, $\gamma_2 = 0.1000$. }
        \label{fig:gamma2_0.1000_final_snapshots}
\end{figure}
It can be seen that the performance of adding one exit lane and two exit lanes is nearly identical, with both resulting in an additional 685 vehicles escaping by the end of the simulation when compared to the default configuration. Similarly to the uncongested case, the addition of more exit lanes results in significantly less congestion near the exit. However, there are no phase transitions, as the network is too congested to fully clear any roads.

In both the uncongested and congested simulations, it is clear that reversing southbound lanes to provide additional exit lanes results in increased network efficiency, allowing more vehicles to escape when compared to the default setting of only two exit lanes. However, it is shown that adding two additional exit lanes results in little to no improvement over adding a single additional exit lane. Solving for the number of lanes required to match the maximum of the incoming fluxes, we have 
\begin{gather*}
    f_c^{in} = 1000 \cdot 2 + 500 = 2500 \Rightarrow 2500 = n \cdot 1000 \Rightarrow n = 2.5 \text{ lanes}
\end{gather*}
Thus, 2 exit lanes are insufficient to handle the max incoming flux, but with 3 exit lanes, the capacity is large enough to fully handle the incoming cars, so adding more exit lanes past 3 does not add any benefit. Therefore, we recommend that one southbound lane be reversed during evacuations, as this would allow the maximum number of vehicles to escape, while the last southbound lane can be reserved for emergency vehicles to enter the network. 

\section{Conclusions and Future Research}\label{section:conclusion}

This paper developed a macroscopic traffic flow framework for wildfire evacuation, combining game-theoretic junction resolution, evacuation-calibrated flux functions, network-wide preference optimization, and Godunov-based numerical simulation. Applied to the 2023 Lahaina wildfire, the framework yields the following conclusions.
\begin{itemize}[leftmargin=5pt]
\item Optimizing driver preferences to favor exit-facing roads improves evacuation throughput when the network is not yet saturated (Sections \ref{sec:phase-1}--\ref{sec:phase-2}).
\item Once roads are gridlocked, route optimization provides diminishing returns. The binding constraint is road capacity, not driver behavior (Section \ref{sec:phase-1}).
\item Reversing one southbound lane on Honoapi`ilani Highway (creating 3 total northbound exit lanes) captures nearly all achievable improvement, enabling 685 additional vehicles to escape in the congested regime. A fourth lane yields no further benefit and can be reserved for emergency vehicles (Section \ref{sec:phase-4}).
\item Opening southbound exits in the afternoon was critical for clearing residents, suggesting that future plans should prioritize having multiple exit directions over optimizing a single route (Section \ref{sec:phase-3}).
\end{itemize}
Underpinning these findings is a phase transition in exit lane capacity (Proposition \ref{closedform-CD}), which separates an \emph{exit-capacity-limited} regime from a \emph{supply-limited} regime at a threshold computable from flux ratios.

The phase transition identified here is structural, not Lahaina-specific. It arises whenever a network funnels traffic from multiple incoming roads through a single bottleneck exit. Wildland-urban interface communities with similar canyon-to-arterial topologies (Topanga Canyon, the Malibu corridor during the Woolsey fire, the Berkeley Hills) would exhibit analogous bottleneck-dominated behavior, and the critical lane threshold is computable from the same flux-ratio formula. More broadly, the end-to-end framework generalizes to other traffic networks and objectives (urban planning, event traffic management) through appropriate choice of the loss function. For such networks, our results suggest that targeted infrastructure investments in exit capacity may yield greater evacuation improvements than real-time route optimization alone. Future work includes extending the phase transition analysis to networks with multiple competing bottlenecks, incorporating real-time data assimilation (e.g., cell phone mobility data, connected vehicle trajectories) for dynamic re-optimization during an evolving evacuation, and adapting the numerical scheme to more general flux functions.

\section*{Acknowledgements}

We are grateful to the Fire Safety Research Institute (FSRI) and the authors of the Lahaina Fire Comprehensive Timeline Report \citep{lahainafirereport} for their thorough documentation of the incident, which was instrumental in reconstructing the timeline and network conditions used in our simulations. We also thank the State of Hawai\okina i Department of Transportation, Highways Division, Maui District, for providing recent traffic volume data for Honoapi\okina ilani Highway and the Lahaina Bypass via private communication, which informed our estimates of road types and capacities. Finally, we extend our heartfelt thanks to the people of Lahaina. We hope that this work can contribute, even in a small way, to improving evacuation preparedness and resilience for their community, and we wish them continued strength and recovery. This work was supported by the National Science Foundation under grants CCF-2345255 and CCF-2345256.

\newpage


\bibliographystyle{AIMS}
\bibliography{references}

\newpage
\appendix
\markboth{\MakeUppercase{Appendix}}{\MakeUppercase{Appendix}}
\section{Numerical Implementation Details}\label{appendix:num_impl}
\subsection{Additional Parameters}For all simulations, the other relevant inputs into Algorithm \ref{alg:num-opt} are as follows - we chose $n_{\text{iter}} = 100, s = 10, c = 0.5, f = 0.5, \tau_{\text{init}} = 1$, and $N_{\text{decay}} = 10.$ The number of sampled indicies $s = 10$ was chosen because the AM Base network (as well as most of the other networks that we test) has around 50 parameters, so sampling 10 directions at every iteration samples a reasonable $1/5$ fraction of the available directions. The decay factor $f = 0.5$ and maximum number of decays $N_{\text{decay}} = 10$ allow for steps as small as $(1/2)^{10}$ in norm to be taken, which is enough precision for optimization purposes. Lastly, the control factor $c = 0.5$ and initial step size $\tau_{\text{init}} = 1$ are standard choices when using the Armijo-Goldstein line search condition, and we chose $n_{\text{iter}} = 100$ after observing that further iterations did not improve the objective function by much.

\subsection{Network Diagrams}
Full network diagrams are provided below, illustrating the road modifications made at each stage of the simulation.
\begin{figure}[H]
     \centering
     \subfloat[AM base network with changes]{\includegraphics[width=0.45\textwidth]{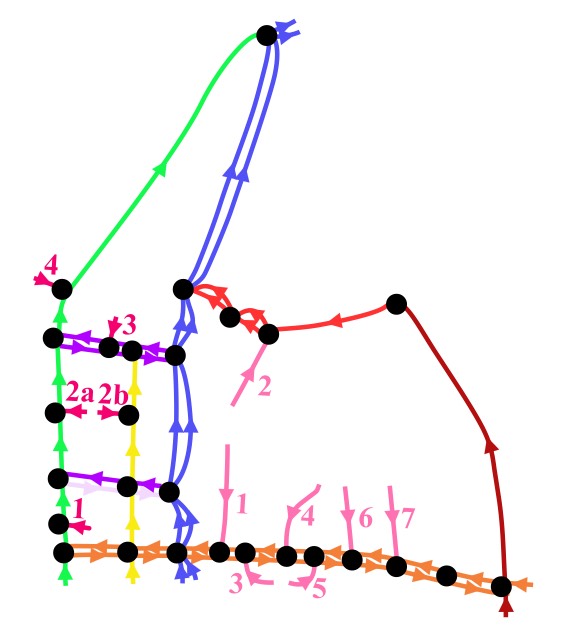}\label{fig:am2_temp}}\hskip5ex
    \subfloat[AM network 2]{\includegraphics[width=0.45\textwidth]{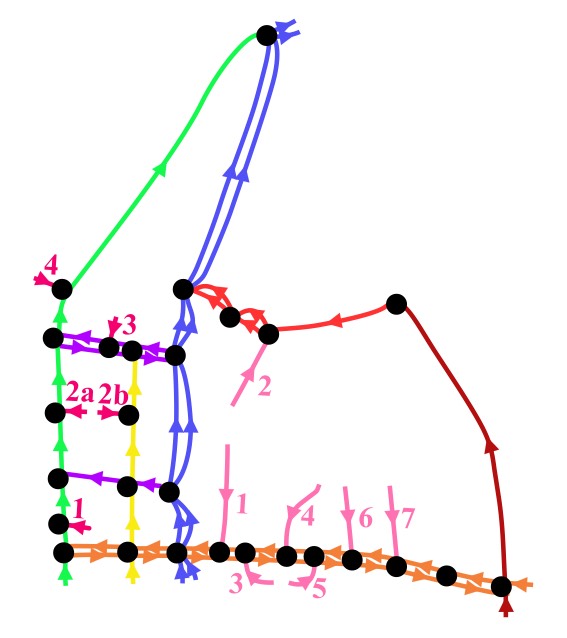}\label{fig:am2_final}}
        \caption{AM Network 2. To be simulated from 13:25-14:21. Figure (A) shows the roads to be removed from the AM base network in gray: (gray-purple) Eastbound Papalaua Street Segment from Front Street to Hwy-30. Figure (B) shows the final network to be simulated for AM network 2.}
        \label{fig:am2}
\end{figure}
\begin{figure}[H]
     \centering
     \subfloat[AM network 2 with changes]{\includegraphics[width=0.45\textwidth]{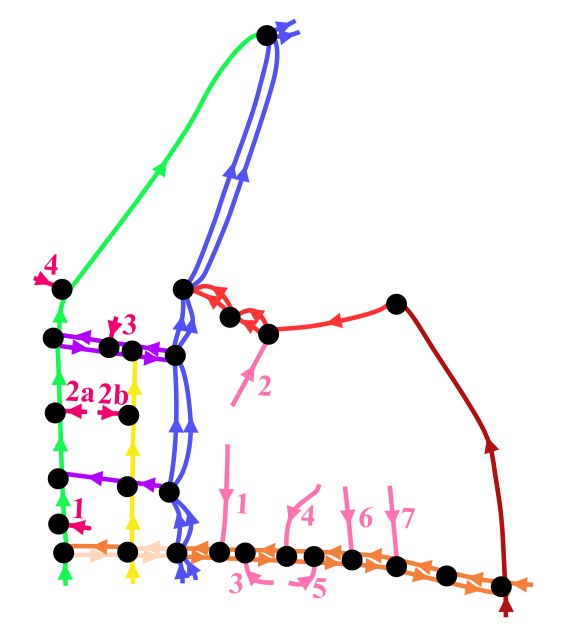}\label{fig:am3_temp}}\hskip5ex
    \subfloat[AM network 3]{\includegraphics[width=0.45\textwidth]{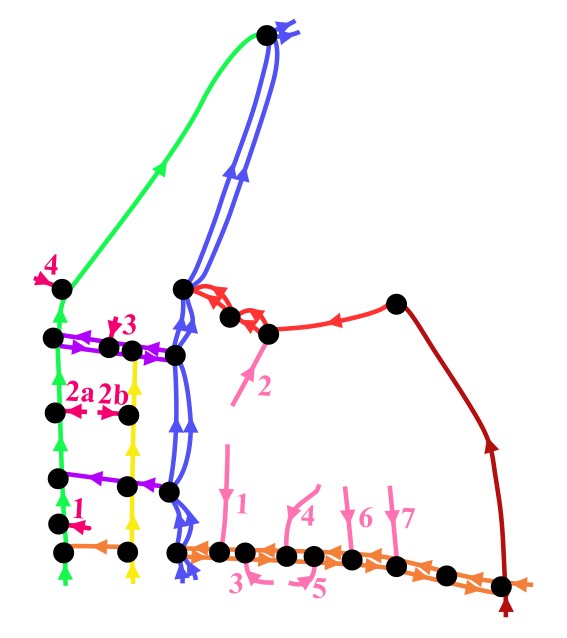}\label{fig:am3_final}}
        \caption{AM Network 3. To be simulated from 14:21-15:00. Figure (A) shows the roads to be removed from the PM base network in gray: (gray-orange) Eastbound Lahainaluna Road Segment from Front Street to Hwy-30, and Westbound Lahainaluna Road Segment from Waine'e Street to Hwy-30. Figure (B) shows the final network to be simulated for AM network 3.}
        \label{fig:am3}
\end{figure}
\begin{figure}[H]
     \centering
     \subfloat[PM base network with changes]{\includegraphics[width=0.45\textwidth, height=0.28\textheight]{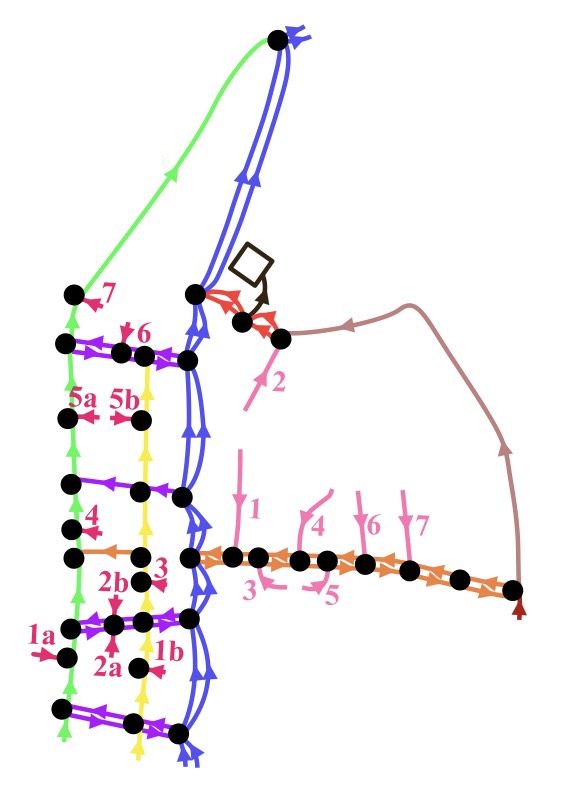}\label{fig:pm2_temp}}\hskip5ex
    \subfloat[PM network 2]{\includegraphics[width=0.45\textwidth, height=0.28\textheight]{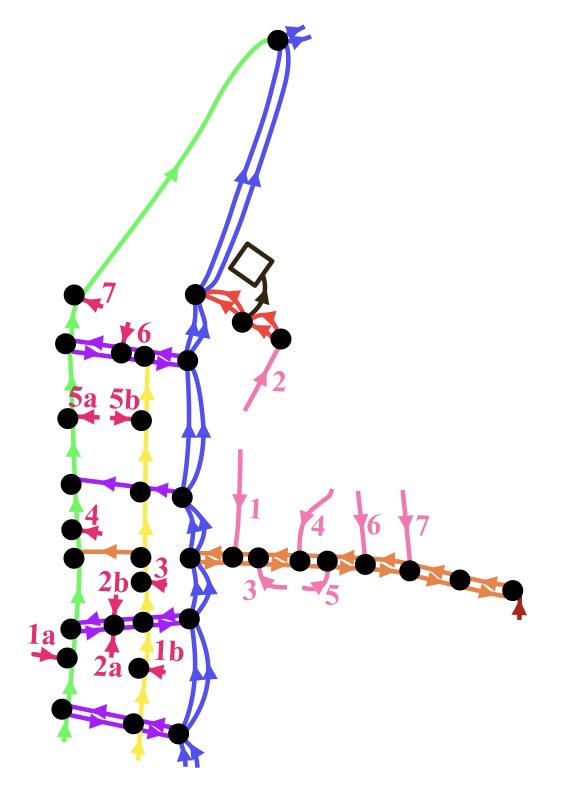}\label{fig:pm2_final}}
        \caption{PM Network 2. To be simulated from 15:25-16:29. Figure (A) shows the roads to be removed from the PM base network in gray: (gray-red) Lahaina Bypass Segment from Lahainaluna Road to Keawe Street Extension. Figure (B) shows the final network to be simulated for PM network 2. }
        \label{fig:pm2}
\end{figure}
\begin{figure}[H]
      \centering
      \subfloat[PM network 2 with changes]{\includegraphics[width=0.45\textwidth, height=0.28\textheight]{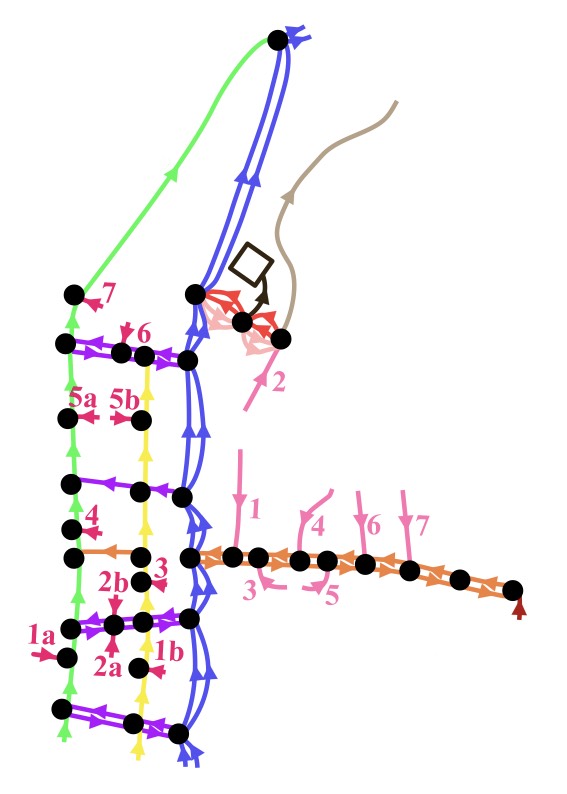}\label{fig:pm3_temp}}\hskip5ex
     \subfloat[PM network 3]{\includegraphics[width=0.45\textwidth, height=0.28\textheight]{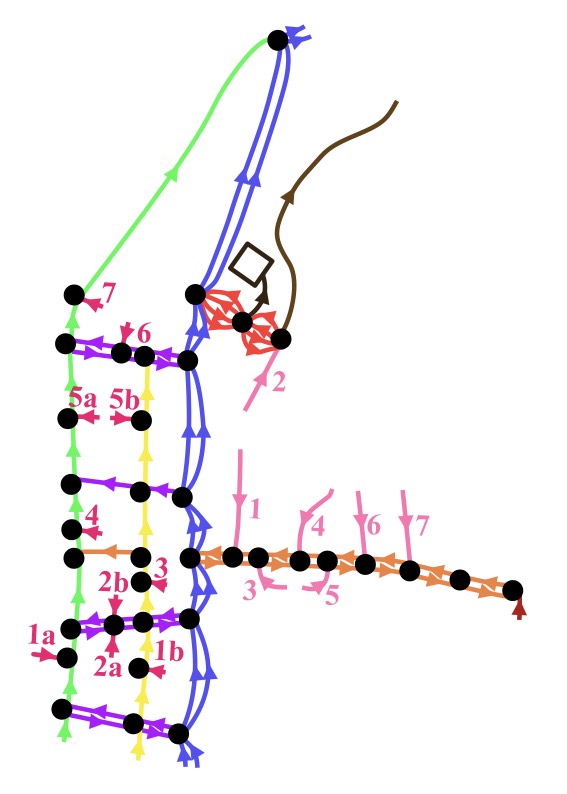}\label{fig:pm3_final}}
        \caption{PM Network 3. To be simulated from 16:29-16:35. Figure (A) shows the additional roads added to PM network 2 in gray: (gray-red) Eastbound Keawe Street Extension, (gray-brown) Oil Road. Figure (B) shows the final network to be simulated for PM network 3. }
        \label{fig:pm3}
\end{figure}
\begin{figure}[H]
     \centering
     \subfloat[PM network 3 with changes]{\includegraphics[width=0.45\textwidth, height=0.28\textheight]{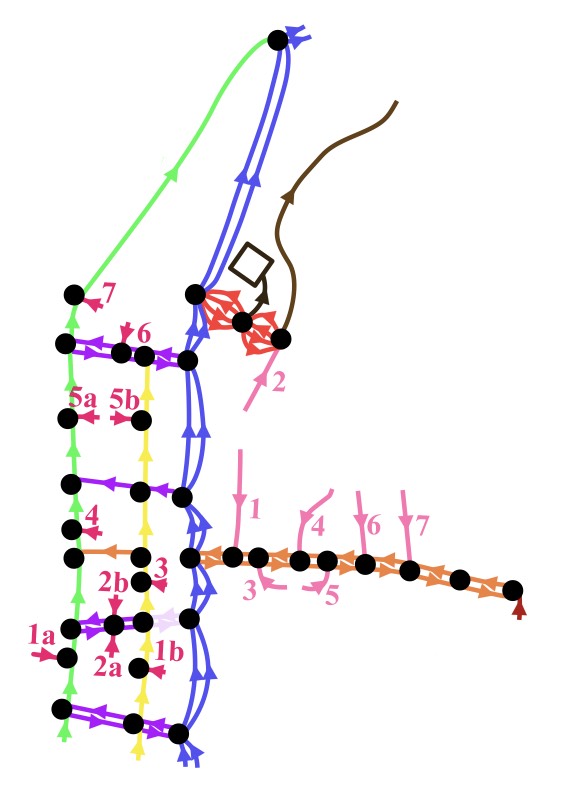}\label{fig:pm4_temp}}\hskip5ex
    \subfloat[PM network 4]{\includegraphics[width=0.45\textwidth, height=0.28\textheight]{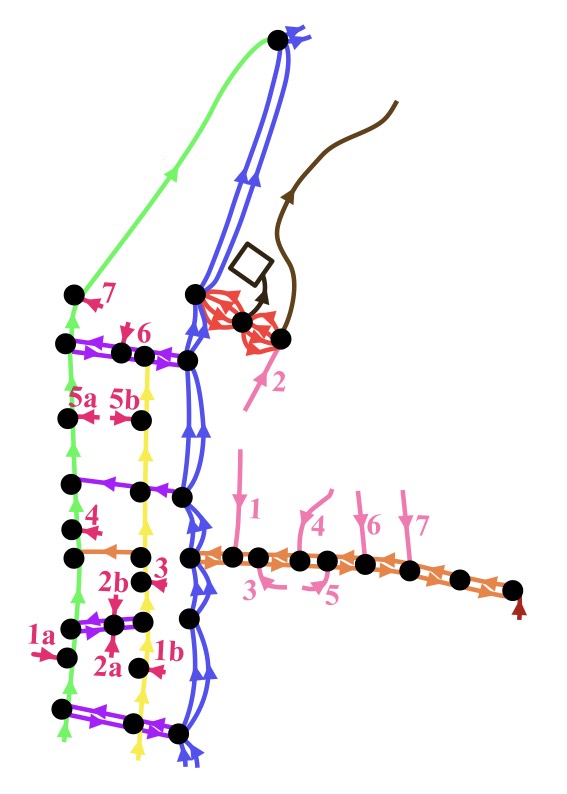}\label{fig:pm4_final}}
        \caption{PM Network 4. To be simulated from 16:35-16:47. Figure (A) shows the roads removed from PM network 3 in gray: (gray-purple) Dickenson Street segment connecting Waine'e Street and Hwy-30. 
        Figure (B) shows the final network to be simulated for PM network 4. }
        \label{fig:pm4}
\end{figure}
\begin{figure}[H]
     \centering
     \subfloat[PM network 4 with changes]{\includegraphics[width=0.45\textwidth, height=0.28\textheight]{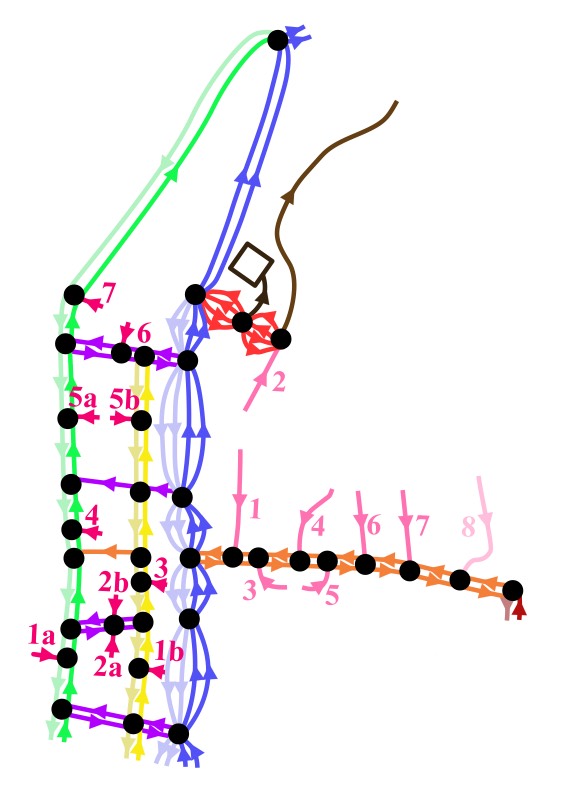}\label{fig:pm5_temp}}\hskip5ex
    \subfloat[PM network 5]{\includegraphics[width=0.45\textwidth, height=0.28\textheight]{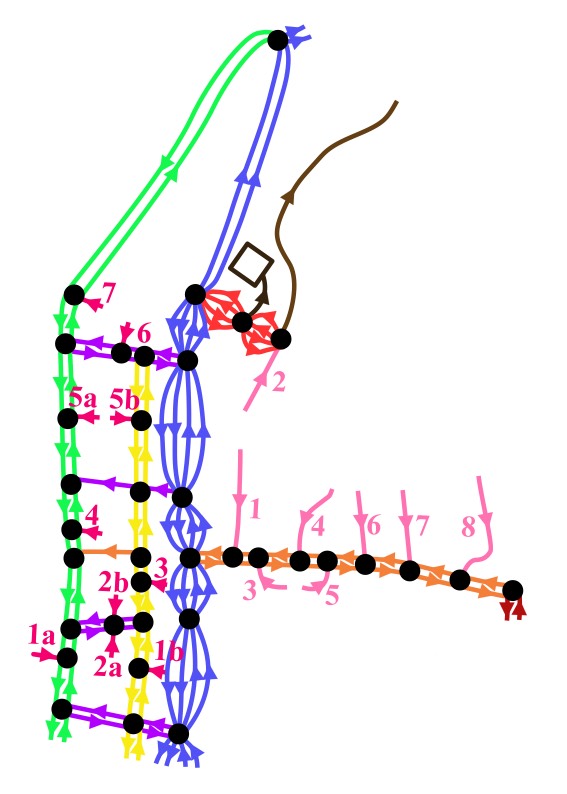}\label{fig:pm5_final}}
        \caption{PM Network 5. To be simulated from 16:47-17:10. Figure (A) shows the additional roads added to PM network 4 in gray: (gray-green) Southbound Front Street, (gray-yellow) Southbound Waine'e Street, (gray-blue) Southbound Hwy-30, (gray-maroon) Exit via Lahaina Bypass, (gray-light pink) Source via nondescript dirt road.
        Figure (B) shows the final network to be simulated for PM network 5. }
        \label{fig:pm5}
\end{figure}

\FloatBarrier

\subsection{Phase 1 Experiments}

We examine the combinations of parameters utilizing three metrics; at the end of the simulation time, we examine our loss function of weighted time-integrated cars in the network, the total number of cars that were able to enter the network, and the total number of cars that were able to exit the network. These three metrics are required to accurately capture the dynamics of the simulations in all congestion scenarios, as the loss function performs better with higher $\gamma$ values, while the other two metrics perform better with lower $\gamma$ values.

{
\setlength{\tabcolsep}{12pt} 
\begin{table}[htbp]
  \centering
  \begin{tabular}{lccc}
    \toprule
    \textbf{$nt_{opt}$} 
      & Weighted Time-Integrated Cars & Cars Entered & Cars Exited\\
    \midrule
    0 & 747.73 & 1,885.00 & 2,151.70\\
    1 & 728.50 & 1,885.00 & 2,164.13\\
    10 & 729.01 & 1,885.00 & 2,165.74\\
    60 & 735.51 & 1,885.00 & 2,166.35\\
    600 & 701.94 & 1,885.00 & 2,180.22\\
    \bottomrule
  \end{tabular}
    \caption{Optimization Metrics for Various $nt_{opt}$ Times (seconds) with $\gamma = 0.01$}\label{tab:ntopt_comparison_gamma0.01}
\end{table}
}
\begin{figure}[htbp]
     \centering
     \subfloat[]{\includegraphics[width=0.3\textwidth]{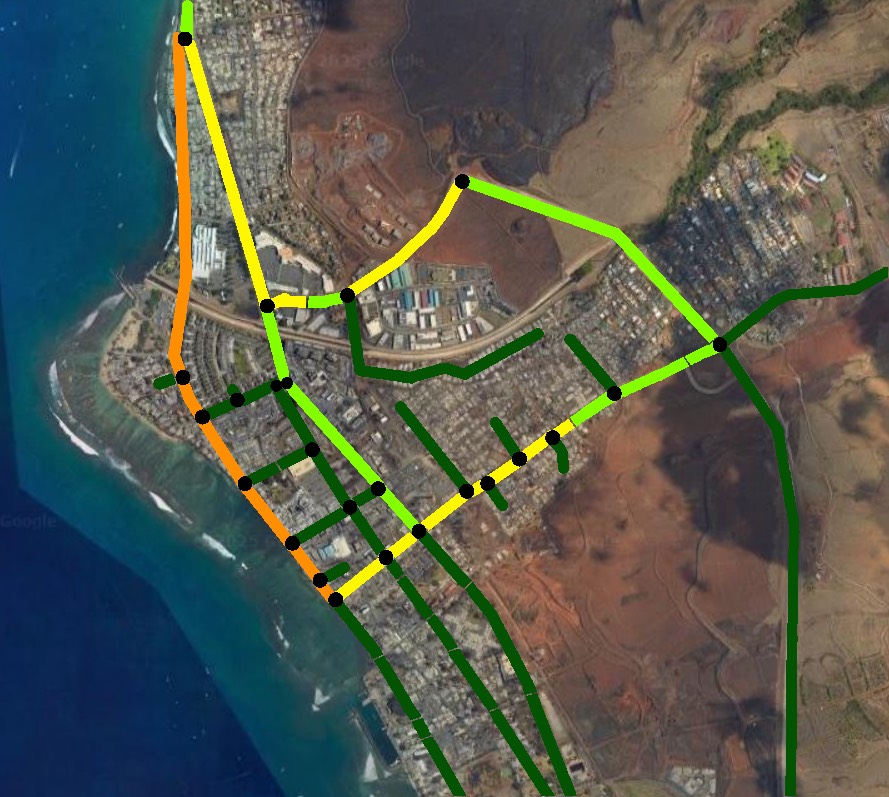}\label{fig:t_0_0.01}}
     \hskip1ex
    \subfloat[]{\includegraphics[width=0.3\textwidth]{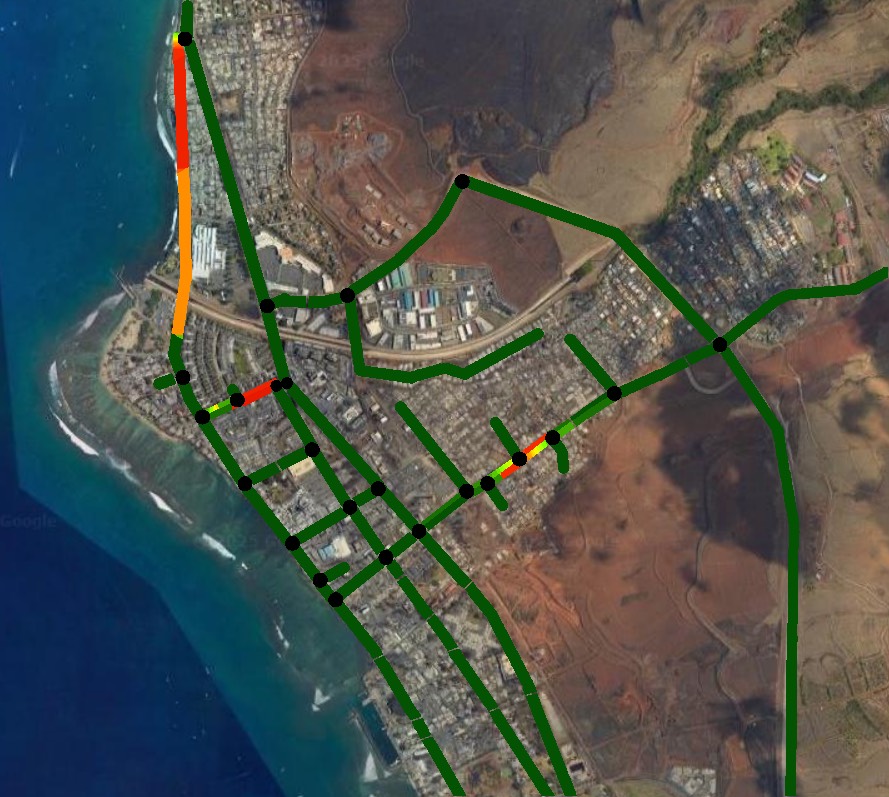}\label{fig:t_6k_0.01}}
     \hskip1ex
    \subfloat[]{\includegraphics[width=0.3\textwidth]{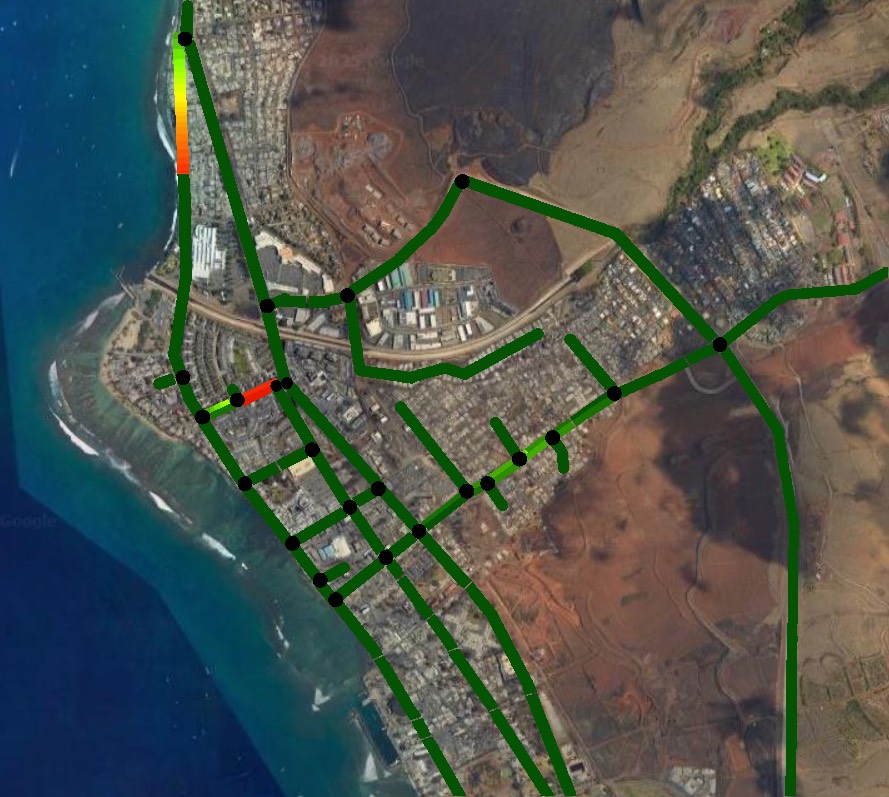}\label{fig:t_12k_0.01}}\\
    \makebox[0.3\textwidth]{$t=0$}\hskip1ex
    \makebox[0.3\textwidth]{$t=600$}\hskip1ex
    \makebox[0.3\textwidth]{$t=1200$}
        \caption{Snapshots of the AM Base Network at $t=0, 600, 1200$ seconds for $\gamma = 0.01$ with $nt_{opt}=0$ seconds.}
        \label{fig:gamma_0.01_snapshots_ntopt0}
\end{figure}
For $\gamma = 0.01$, source roads begin uncongested, and the network fully clears by the end of the simulation for all $nt_{opt}$ values. This can be seen in Figure \ref{fig:gamma_0.01_snapshots_ntopt0}. In Table \ref{tab:ntopt_comparison_gamma0.01}, it can be seen that the weighted time-integrated cars fluctuating, and it is the lowest for $nt_{opt} = 600$, which might suggest that optimizing for a longer period of time is less effective. However, this is not accurate - the loss function weights the time-integrated cars on a road by how close the road is to the nearest exit. When the network is congested, this represents pushing traffic towards the exits, which is ideal in an evacuation scenario. But when the network is very uncongested, as in this case with $\gamma=0.01$, this means that if there is any congestion on roads close to the exit, the loss function will be higher. With $nt_{opt}=0$, the network is actually more congested for a longer period of time, but this congestion happens to be near the exit, which is why the loss function seems to indicate better performance. This is confirmed by examining the total number of cars exited, showing that the optimized simulations result in more cars being able to exit the network by the end of simulation time, with $nt_{opt} = 600$ performing the best. Since all networks fully clear by the end of the simulation, these differences in the network's performance can be seen by examining snapshots of the network at time $t=600$ seconds, shown in Figure \ref{fig:gamma_0.01_snapshots}.
\begin{figure}[htbp]
     \centering
     \subfloat[ ]{\includegraphics[width=0.3\textwidth]{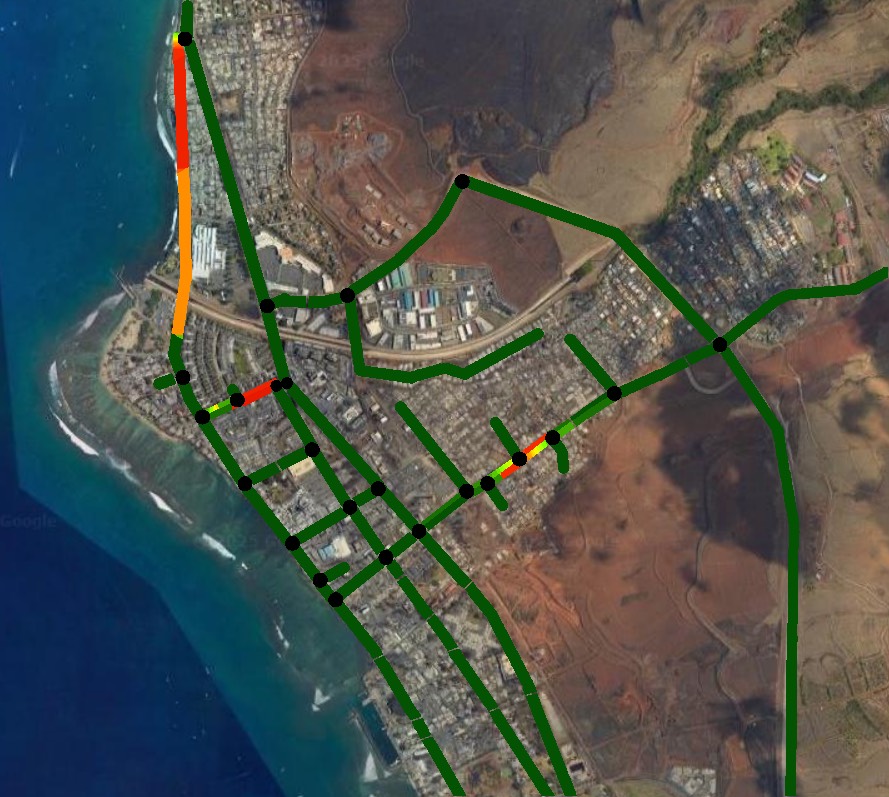}\label{fig:gamma0.01_ntopt0}}\hskip1ex
    \subfloat[]{\includegraphics[width=0.3\textwidth]{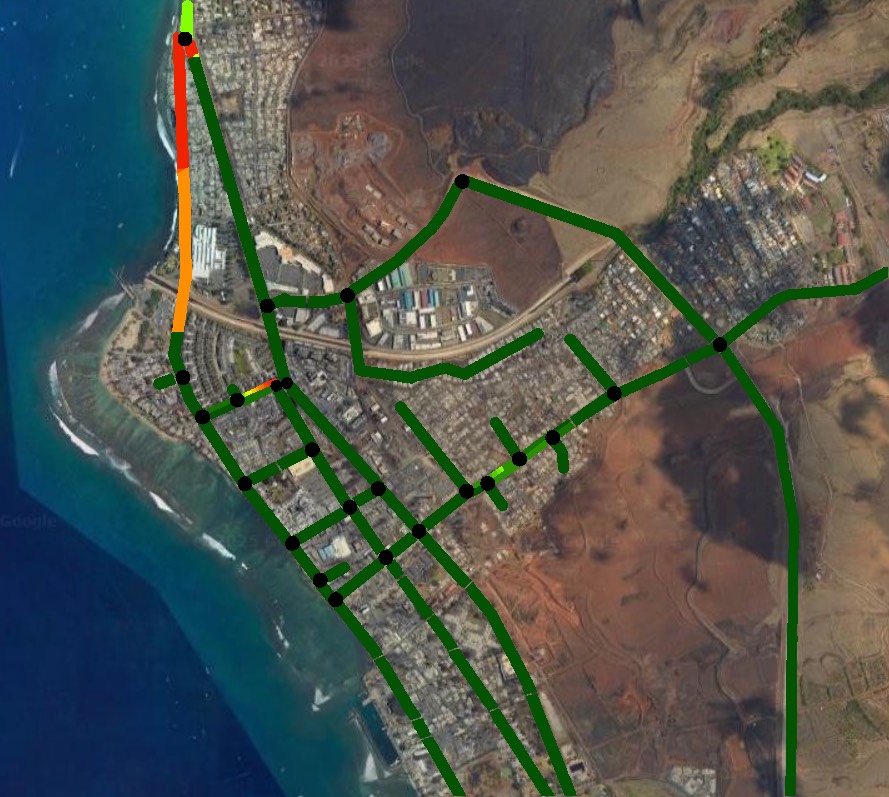}\label{fig:gamma0.01_ntopt10}}\hskip1ex
    \subfloat[]{\includegraphics[width=0.3\textwidth]{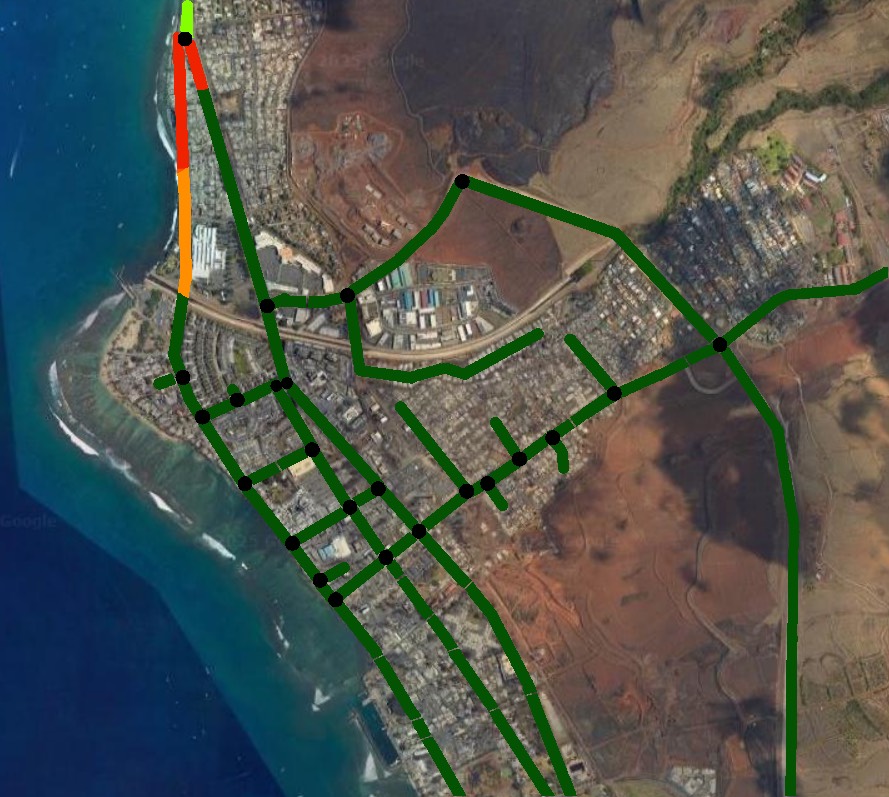}\label{fig:gamma0.01_ntopt60}}\\
    \makebox[0.3\textwidth]{$nt_{opt}=0$}\hskip1ex
    \makebox[0.3\textwidth]{$nt_{opt}=1$}\hskip1ex
    \makebox[0.3\textwidth]{$nt_{opt}=60$}
        \caption{Snapshots of the AM Base Network at $t=600$ for $\gamma = 0.01$ with $nt_{opt}=0,1$ and $60$ seconds.}
        \label{fig:gamma_0.01_snapshots}
\end{figure}

{
\setlength{\tabcolsep}{12pt} 
\begin{table}[htbp]
  \centering
  \begin{tabular}{lccc}
    \toprule
    \textbf{$nt_{opt}$} 
      & Weighted Time-Integrated Cars & Cars Entered & Cars Exited\\
    \midrule
    0 & 6,365.52 & 5,528.19 & 4,815.65\\
    1 & 6,485.01 & 5,546.28 & 4,815.65\\
    10 & 6,495.34 & 5,548.56 & 4,815.65\\
    60 & 6,591.61 & 5,547.20 & 4,815.65\\
    600 & 6,828.82 & 5,566.66 & 4,815.65\\
    \bottomrule
  \end{tabular} 
  \caption{Optimization Metrics for Various $nt_{opt}$ Times (seconds) with $\gamma = 0.0375$}\label{tab:ntopt_comparison_gamma0.0375}
\end{table}
}

\begin{figure}[H]
     \centering
     \subfloat[]{\includegraphics[width=0.3\textwidth]{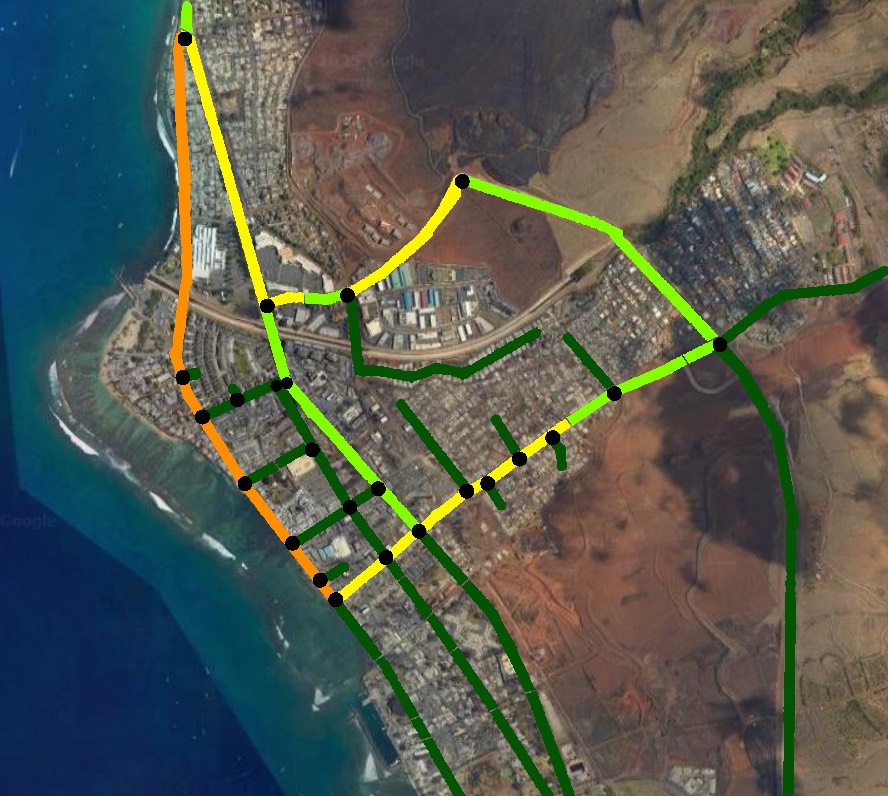}\label{fig:gamma0.0375_step0}}\hskip1ex
    \subfloat[]{\includegraphics[width=0.3\textwidth]{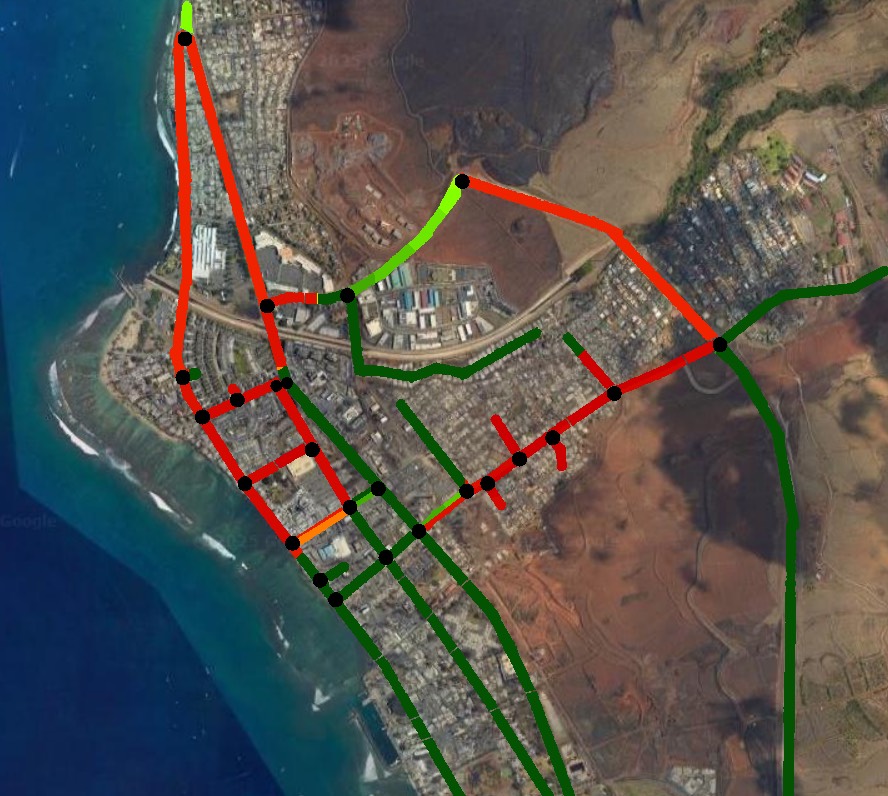}\label{fig:gamma0.0375_step30k}}\hskip1ex
    \subfloat[]{\includegraphics[width=0.3\textwidth]{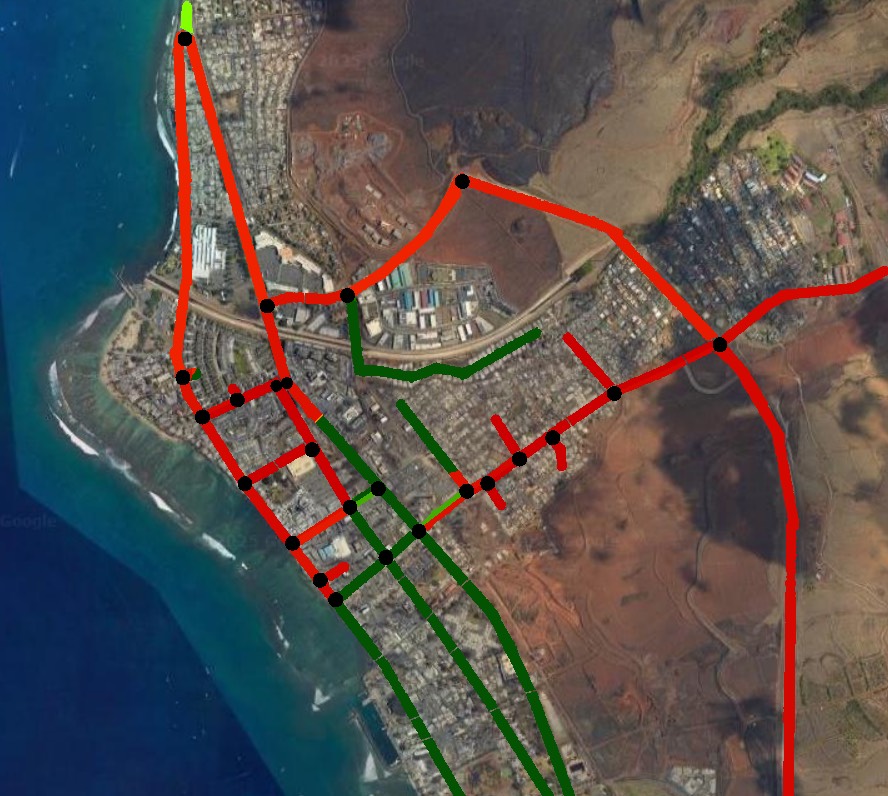}\label{fig:gamma0.0375_step60k}}\\
    \makebox[0.3\textwidth]{$t=0$}\hskip1ex
    \makebox[0.3\textwidth]{$t=3000$}\hskip1ex
    \makebox[0.3\textwidth]{$t=6000$}
        \caption{Snapshots of the AM Base Network at $t=0, 3000, 6000$ seconds for $\gamma = 0.0375$ with $nt_{opt}=0$ seconds.}
        \label{fig:gamma_0.0375_snapshots}
\end{figure}

When $\gamma = 0.0375$, the source roads are mildly congested initially, and the network slowly floods by the end of the simulation. Snapshots of the evolution for $nt_{opt}=0$ are given in Figure \ref{fig:gamma_0.0375_snapshots} for visualization, while snapshots for all the choices of $nt_{opt}$ are provided in the Github repository. Since the final network has congestion, the loss function acts as a reasonable metric for performance, with $nt_{opt} = 600$ performing the best. Table \ref{tab:ntopt_comparison_gamma0.0375} shows that the number of cars that can enter the network also increases with $nt_{opt}$, while the number of cars exiting is the same for all values. This is due to the entropy and junction conditions at the exit junction detailed in Section 2, where for any $nt_{opt}$ values, the congestion in the network overwhelms the system, so the maximum number of cars able to exit is the same. However, the loss function and cars entered show that the optimization still results in improved efficiency of the network. Figure \ref{fig:gamma_0.0375_snapshots_comparison} provides snapshots at $t = 900$ seconds displaying how increased $nt_{opt}$ values concentrate congestion towards the exit. 

\begin{figure}[htbp]
     \centering
     \subfloat[]{\includegraphics[width=0.30\textwidth]{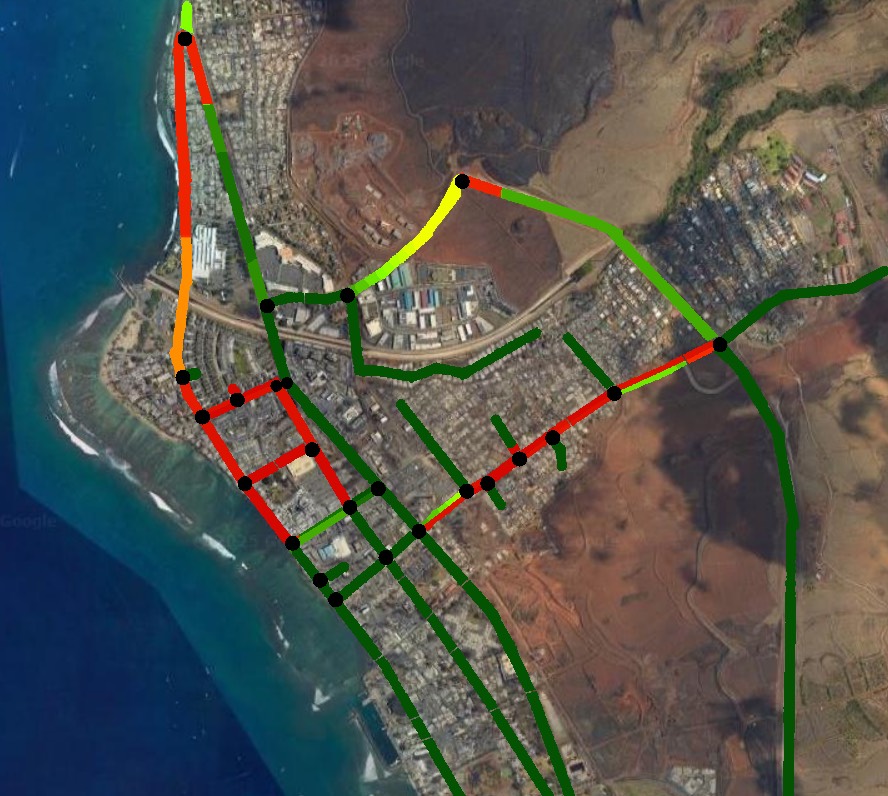}\label{fig:gamma0.0375_ntopt0}}\hskip1ex
    \subfloat[]{\includegraphics[width=0.3\textwidth]{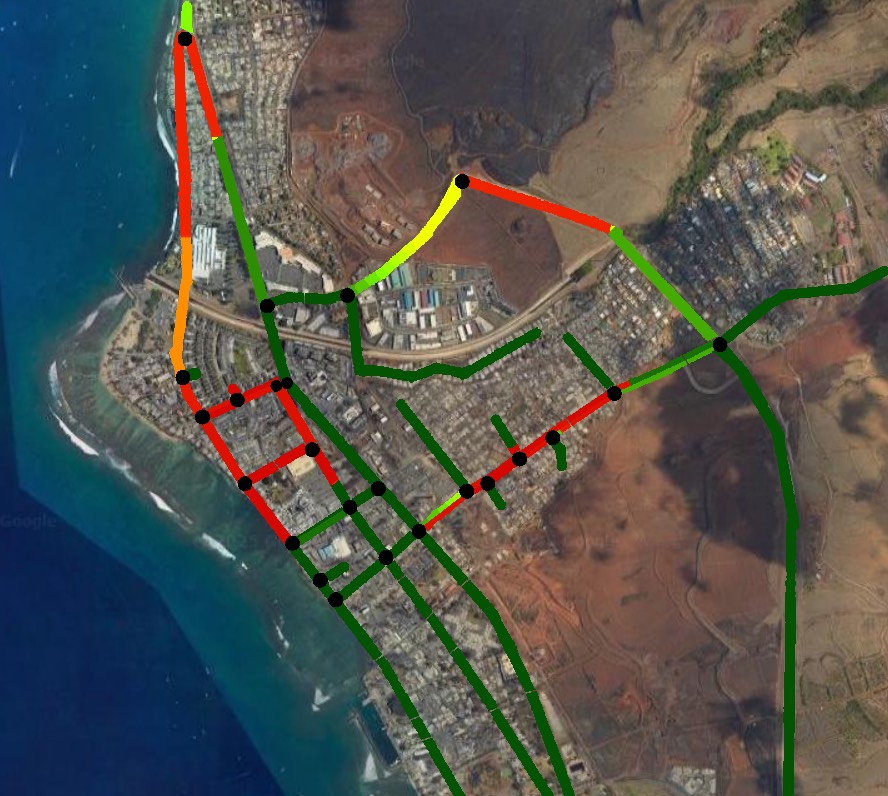}\label{fig:gamma0.00.0375_ntopt10}}\hskip1ex
    \subfloat[]{\includegraphics[width=0.3\textwidth]{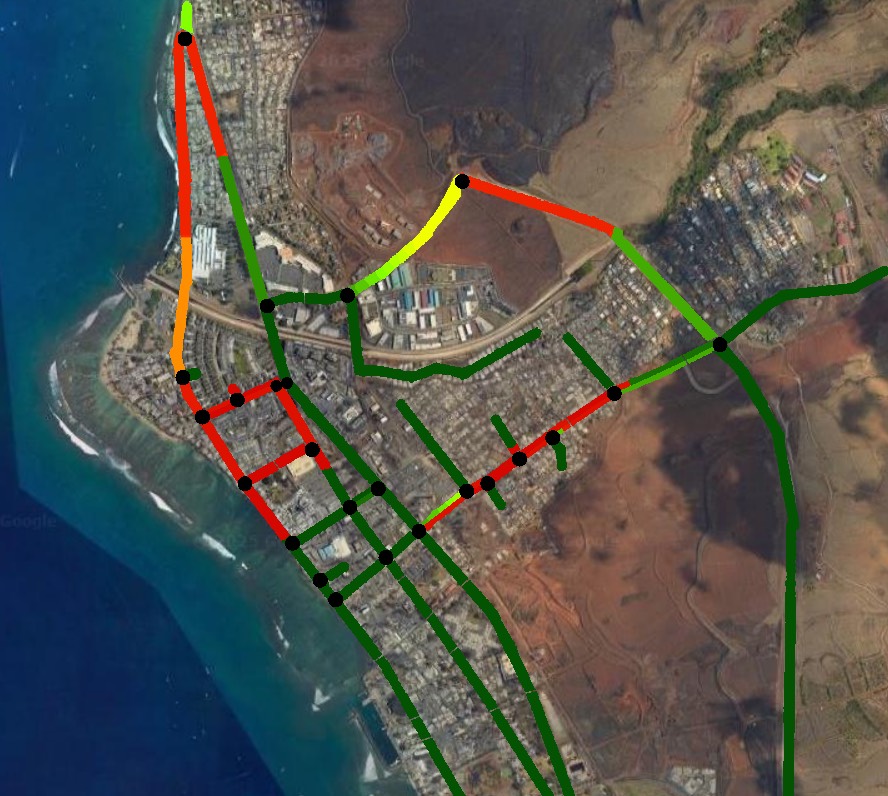}\label{fig:gamma0.0375_ntopt60}}\\
    \makebox[0.3\textwidth]{$nt_{opt}=0$}\hskip1ex
    \makebox[0.3\textwidth]{$nt_{opt}=1$}\hskip1ex
    \makebox[0.3\textwidth]{$nt_{opt}=60$}
        \caption{Snapshots of the AM Base Network at $t=900$ for $\gamma = 0.0375$ with $nt_{opt}=0,1$ and $60$ seconds.}
        \label{fig:gamma_0.0375_snapshots_comparison}
\end{figure}

\begin{table}[htbp]
\centering
\begin{tabular}{lccc}
\toprule
$nt_{opt}$ & $\gamma = 0.075$ & $\gamma = 0.125$ & $\gamma = 1.00$ \\
\midrule
0   & 7,754.90 & 7,857.96 & 7,874.50 \\
1   & 7,755.87 & 7,858.34 & 7,874.90 \\
10  & 7,755.91 & 7,858.39 & 7,874.91 \\
60  & 7,756.04 & 7,858.42 & 7,874.99 \\
600 & 7,776.05 & 7,858.61 & 7,875.26 \\
\bottomrule
\end{tabular}
\caption{Weighted Time-Integrated Cars by $nt_{opt}$ (seconds) and $\gamma$ for large $\gamma = 0.075, 0.125, 1.000$.}
\label{tab:wtcars_ntopt_comparison}
\end{table}

For $\gamma = 0.075, 0.125,$ and $1.0$, the weighted time integrated cars for varying $nt_{opt}$ values are given in Table \ref{tab:wtcars_ntopt_comparison} respectively. These choices in $\gamma$ values all result in a completely "flooded" network by the end of the simulation, with all road segments being LOS E. For $\gamma=0.075$, this occurs at 2,550 seconds (42.5 minutes), for $\gamma=0.125$ this occurs at 1,320 seconds (22 minutes), and for $\gamma=1.0$, it occurs at 1,110 seconds (18.5 minutes). Figure \ref{fig:gamma_0.075_snapshots} displays this behavior, providing snapshots of the network for $\gamma = 0.075, nt_{opt}=0$. Since the network begins congested and completely floods for all three choices of $\gamma$, the cars entered and cars exited are no longer useful metrics. The cars entered is 5,746, and cars exited is 4,816 for all three choices of $\gamma$. However, the loss function still shows improvement with increasing values of $nt_{opt}$, but the amount of improvement decreases as $\gamma$ increases. 

\section{Proof of Theorem~\ref{opt-sol}}\label{appendix:theorem_proof}

\begin{proof} $ $ \\
\begin{enumerate}[(i)]
\item If $r \leq 1$, then to maximize fluxes, it suffices to set $(\hat{\gamma}_1,\cdots,\hat{\gamma}_n)^T = (c_1,\cdots,c_n)^T$, equivalent to setting the incoming fluxes to be at the maximum capacities along the incoming roads. The resulting fluxes along the outgoing roads as prescribed by the drivers' preferences satisfies $(\hat{\gamma}_{n+1},\cdots,\hat{\gamma}_{n+m})^T \in \prod_{j=1}^m [0,c_{n+j}]$ since $r \leq 1$.
\item If $r > 1$, then the same trick as in (i) would not work as $(\hat{\gamma}_{n+1},\cdots,\hat{\gamma}_{n+m})^T \notin \prod_{j=1}^m [0,c_{n+j}]$. However, we are instead solving a different optimization problem involving the drivers' right of way as described in (B). Similar to (i), we set the incoming fluxes to be at their individual capacities by setting $\mu = 1$. For the value of $\lambda,$ notice that any value of $\lambda \in [0,1]$ would imply $(\hat{\gamma}_{n+1},\cdots,\hat{\gamma}_{n+m})^T \in \prod_{j=1}^m [0,c_{n+j}]$. However, we would still need to satisfy \eqref{PDE17}. One can easily check that the unique value of $\lambda$ can be obtained by solving $\sum_{i=1}^n \hat{\gamma}_i = \sum_{i=1}^n c_i = c_{\text{in}} = \lambda c_{\text{out}} = \sum_{j=n+1}^{n+m}\hat{\gamma}_j$. \vspace{5pt} \\
Physically, this corresponds to the scenario in which the outgoing roads are still able to accommodate for all the cars attempting to go through the junction onto the outgoing roads. Thus, we allow all the incoming cars to go through and scale the outgoing fluxes down from its capacity proportional to their individual capacities in order to respect the Kirchhoff Law of fluxes at the junction.
\item Analogous to (ii), we can see this by observing that maximizing the sum of incoming fluxes is the same as maximizing the sum of outgoing fluxes due to the Kirchhoff Law of fluxes at the junction. Since the total capacities along outgoing roads are smaller than that for incoming roads, we set the outgoing fluxes to be at its individual capacities by setting $\lambda = 1$. The value of $\mu$ can be obtained by utilizing the Kirchhoff Law of fluxes at the junction in a similar fashion as for $\lambda$ in (ii).\vspace{5pt} \\
Physically, this corresponds to the case of a congested junction due to the inability of the capacities of the outgoing roads to accommodate the fluxes of cars from the incoming roads. Thus, we allow as many cars to go through the outgoing roads as possible, and scale the fluxes from the incoming roads down proportional to its flux capacities.
\end{enumerate}
\end{proof}

\section{Detailed Computations for the Toy Network}\label{appendix:proofs}

In this appendix, we describe additional experiments on the toy network from Section~\ref{section:results} and provide the detailed computations. Since roads 2 and 3 share parameters and together form the residential route, we use the index $i = 2$ to represent the extended road that is the combination of roads 2 and 3 (with twice the length), and only distinguish them in the computation for the weighted time-integrated cars on the road.

We consider four experiments on this network, varying the initial data and source configuration.
\begin{itemize}
\item[\textbf{(A)}] \textit{Uncongested, single source.} Only road 1 is initialized at $\rho_{\text{init}} < \sigma_1$; all other roads start empty.
\item[\textbf{(B)}] \textit{Congested, single source.} Only road 1 is initialized at $\rho_{\text{init}} > \sigma_1$; all other roads start empty.
\item[\textbf{(C)}] \textit{Uncongested, all roads.} Roads 1, 2, 3, and 4 are initialized at $\rho_{\text{init}} < \sigma_i$, with road 5 empty.
\item[\textbf{(D)}] \textit{Congested, all roads.} Roads 1, 2, 3, and 4 are initialized at $\rho_{\text{init}} > \sigma_i$, with road 5 empty ($\rho_5(0,x) \equiv 0$).
\end{itemize}
Experiments (A)--(C) admit closed-form solutions that are independent of the number of exit lanes $n_5$; we derive these below along with the full closed-form expressions for Experiment~(D), including the detailed flux formulas that imply the phase transition in Proposition~\ref{closedform-CD}. We also provide a detailed rarefaction analysis for Experiment~(D).

\subsubsection*{Proposition~\ref{closedform-AB}: Closed-Form Expressions for Experiments (A) and (B)}

\begin{proposition}\label{closedform-AB}
(Closed Form Expressions for Experiments (A) and (B).) Suppose conditions~(i)--(iii) from Section~\ref{section:results} hold, and additionally:
\begin{itemize}
\item[(0)] \textit{(Initial data for experiments (A) and (B).)} $\rho_1(0,x)\equiv \rho_{\text{init}}$ for $x\in (0,L_1)$, and $\rho_i(0,x)\equiv 0$ for $x\in (0,L_i)$ for $i\in\{2,4,5\}$, where road $2$ denotes the extended road representing roads $2$ and $3$.
\item[(I)] $\rho_{\text{init}} < \sigma_1$,
\item[(II)] $f_1(\rho_\text{init}) < f_{c,i}$ for $i \in \{2,4\}$,
\item[(I')] $\rho_{\text{init}} \in (\sigma_1,1)$,
\item[(II')] $\max\left\{\frac{ \alpha f_{c,1}}{f_{c,2}}, \frac{(1-\alpha)f_{c,1}}{f_{c,4}}\right\} <1 $,
\end{itemize}
then:
\begin{itemize}
\item[(A)] Under assumptions (0), (I), and (II), the densities on roads $2,4,$ and $5$ are given by \eqref{example-eq4}--\eqref{example-eq7} and we have
\begin{multline}\label{example-CarsExited}
\text{Cars Exited}(T)
=
\rho_{\text{jam}}
\biggl[
(1-\alpha)\,v_1\rho_{\text{init}}\,
\Bigl(T-\tzero-\tauFive\Bigr)_+ \\
+\,
\alpha\,v_1\rho_{\text{init}}\,
\Bigl(T-\tone-\tauFive\Bigr)_+
\biggr]
\end{multline}
and
\begin{equation}\label{example-WeightedIntTotal}
\text{Weighted Time-Integrated Cars}(T) = W_1(T) + W_2(T) + W_3(T) + W_4(T) + W_5(T),
\end{equation}
where the expressions for $W_1(T)$ through $W_5(T)$ are given in \eqref{example-W1}--\eqref{example-W5} below. Here, $(s)_+ := \max\{0,s\}.$
\item[(B)] Under assumptions (0), (I'), and (II'), the densities on roads $2,4,$ and $5$ are given by \eqref{example-eq4}--\eqref{example-eq7} after replacing $\rho_{\text{init}}$ with $\sigma_1$ and hence $f_1(\rho_{\text{init}})$ with $f_{c,1}=v_1\sigma_1$, while the density along road 1 is given by \eqref{example-eq14}. Consequently, we have
\begin{equation}\label{example-CarsExited-B}
\text{Cars Exited}(T)
=
\rho_{\text{jam}}
\left[
(1-\alpha)\,f_{c,1}\,
\Big(T-\tzero-\tauFive\Big)_+
+
\alpha\,f_{c,1}\,
\Big(T-\tone-\tauFive\Big)_+
\right]
\end{equation}
and
\begin{equation}\label{example-WeightedIntTotal-B}
\text{Weighted Time-Integrated Cars}(T)
=
W_1^{(B)}(T)
+
\widetilde{W}_2(T)+\widetilde{W}_3(T)+\widetilde{W}_4(T)+\widetilde{W}_5(T),
\end{equation}
where $\widetilde{W}_i(T)$ for $i\in\{2,3,4,5\}$ are obtained from \eqref{example-W2}--\eqref{example-W5} by replacing $\rho_{\text{init}}$ with $\sigma_1$, and $W_1^{(B)}(T)$ is given in \eqref{example-W1-caseB}.
\end{itemize}
\end{proposition}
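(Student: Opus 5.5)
The plan is to solve the problem road by road, using characteristics and Riemann problems at the junctions. Junction fluxes come from Theorem~\ref{opt-sol} at each instant.

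\textbf{Step 1 (setup).} Merge roads 2 and 3 into one extended road of length $L = L_2+L_3$, as the appendix does.

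\textbf{Step 2 (road 1).} Road 1 starts at the constant free-flow density $\rho_{\text{init}} < \sigma_1$. Since $f_1'=v_1>0$ there, the left boundary data is constant. Hence $\rho_1 \equiv \rho_{\text{init}}$ up to the left junction, and the demand there is $c_1 = v_1\rho_{\text{init}}$.

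\textbf{Step 3 (left junction).} Roads 2 and 4 start empty, so their supplies are $f_{c,2}$ and $f_{c,4}$. Assumption (II) gives $r<1$, so Theorem~\ref{opt-sol}(i) applies. The outgoing fluxes are therefore $\alpha v_1\rho_{\text{init}}$ on road 2 and $(1-\alpha)v_1\rho_{\text{init}}$ on road 4. Both lie on the linear branch, so they travel as contact discontinuities with speeds $v_2$ and $v_4$. This yields the step-profile densities \eqref{example-eq4}--\eqref{example-eq7}: on road 2, $\rho_2 = \alpha v_1\rho_{\text{init}}/v_2$ for $x < v_2 t$ and $0$ beyond. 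Road 4 is analogous.

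\textbf{Step 4 (right junction and road 5).} The fronts reach the right junction at times $L/v_2$ and $L/v_4$. Once a front has arrived, the incoming demand is the sum of the arrived fluxes, at most $v_1\rho_{\text{init}}$. Road 5 initially has supply $n_5 f_{c,1} \ge f_{c,1} > v_1\rho_{\text{init}}$, so $r\le 1$ and all arriving flux passes. On road 5 the flux stays on the linear branch with speed $v_1$. Each contribution therefore takes a further $L_5/v_1$ to reach the exit.

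\textbf{Step 5 (cars exited).} Integrating the exit flux in time gives \eqref{example-CarsExited}, one term per route with delays $L/v_i + L_5/v_1$. The $W_i$ follow by integrating the piecewise-constant, front-limited densities over space and time. This is routine but tedious, with cases split by $T$ relative to the arrival times.

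\textbf{Step 6 (part (B)).} Road 1 starts congested, $\rho_{\text{init}}>\sigma_1$, and meets the free downstream supply at the left junction. This is a Riemann problem: by (II') and Theorem~\ref{opt-sol}(i), the junction passes the full capacity $c_1=f_{c,1}$. The boundary density on road 1 drops to $\sigma_1$, creating a backward rarefaction fan with speeds $f_1'(\rho)=2A_1(\rho-\sigma_1)$ from $0$ down to $2A_1(\rho_{\text{init}}-\sigma_1)$. Solving $x = L_1 + 2A_1(\rho-\sigma_1)t$ gives $\rho = \sigma_1 + (x-L_1)/(2A_1 t)$ inside the fan, which is \eqref{example-eq14}. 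Because the left-boundary treatment keeps road 1 congested, the fan does not interact over the horizon. Downstream, everything is Step 3 onward with $v_1\rho_{\text{init}}$ replaced by $f_{c,1}$, and $W_1^{(B)}$ comes from integrating the fan profile.

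\textbf{Main obstacle.} The main risk is verifying that no wave interactions occur over $[0,T]$. This includes the road-1 fan reaching the upstream boundary and any backward waves appearing at the right junction. I would first check that every junction stays in case (i), so that no backward shocks form. I would then argue that $T$ is implicitly short enough, or that the non-reflecting boundary keeps the road-1 state fixed.
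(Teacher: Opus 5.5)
\textbf{Where your argument departs from the stated formulas.} Your approach is the same as the paper's. You apply case~(i) of Theorem~\ref{opt-sol} at the left junction at $t=0$, and at the right junction at $t=L/v_4$ and $t=L/v_2$. The fronts are linear-branch contact discontinuities with speeds $v_2,v_4,v_1$. In (B) you reuse the same downstream picture with $v_1\rho_{\text{init}}$ replaced by $f_{c,1}$, plus a backward fan on road~1. Part~(A) and both Cars-Exited formulas go through as you describe.

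The genuine problem is your sentence ``which is \eqref{example-eq14}''. Your fan $\rho_1=\sigma_1+(x-L_1)/(2A_1t)$ on $\bigl(\max\{0,\,L_1+2A_1(\rho_{\text{init}}-\sigma_1)t\},\,L_1\bigr)$ is correct. It is \emph{not}, however, the paper's profile. (The density is \eqref{example-eq13}; \eqref{example-eq14} is its spatial integral.) The paper's \eqref{example-eq13} reads $\sigma_1-(x-L_1)/(2A_1t)$ on $(\max\{0,L_1-v_1t\},L_1)$, which has two errors:
\begin{itemize}
\item it inverts $f_1'$ with the wrong sign, so its fan states lie below $\sigma_1$;
\item it moves the fan edge at speed $v_1$ instead of $-2A_1(\rho_{\text{init}}-\sigma_1)$.
\end{itemize}

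Set $t^\ast:=-L_1/\bigl(2A_1(\rho_{\text{init}}-\sigma_1)\bigr)$. Integrating your profile gives $\int_0^{L_1}\rho_1\,\D x=\rho_{\text{init}}L_1+A_1(\rho_{\text{init}}-\sigma_1)^2t$ for $t\le t^\ast$. This is consistent with conservation, since inflow $f_1(\rho_{\text{init}})$ minus outflow $f_{c,1}$ equals $A_1(\rho_{\text{init}}-\sigma_1)^2$. For $t>t^\ast$ it gives $\sigma_1L_1-L_1^2/(4A_1t)$. By contrast, the second branch of \eqref{example-eq14}, $\sigma_1L_1+L_1^2/(4A_1t)<\sigma_1L_1$, is impossible for a fan whose states all exceed $\sigma_1$.

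Consequently your argument yields $W_1^{(B)}(T)=\tfrac14\rho_{\text{init}}L_1T+\tfrac18A_1(\rho_{\text{init}}-\sigma_1)^2T^2$ for $T\le t^\ast$. Beyond that, the switch time is $t^\ast$ and the logarithmic term is $-\tfrac{L_1^2}{16A_1}\ln(T/t^\ast)$. That is not \eqref{example-W1-caseB}. You cannot claim to have recovered the stated formula. The honest conclusion is that the statement's road-1 formulas need correcting, and your derivation supplies the correction.

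\textbf{The $W_i$ are not routine.} The same caution applies to dismissing these integrals as routine but tedious. Integrate your step profiles with weights $d_1=d_2=2$, $d_3=d_4=1$, $d_5=0$. This reproduces \eqref{example-W1}, \eqref{example-W2} and \eqref{example-W4}, but not the other two:
\begin{itemize}
\item $W_5$ needs an overall factor $v_1$ on both brackets of \eqref{example-W5}; as displayed, it has units of density times time squared.
\item $W_3$ comes out as $\tfrac{\alpha v_1\rho_{\text{init}}}{4}\bigl[(T-\tfrac{L}{2v_2})_+^2-(T-\tfrac{L}{v_2})_+^2\bigr]$. This has none of the two linear terms in \eqref{example-W3}, which would make $W_3$ negative just after $T=L/(2v_2)$.
\end{itemize}
When you write the proof out, do these integrations explicitly. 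Flag the discrepancies rather than asserting agreement with the displayed expressions.
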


\textit{Proof of Proposition \ref{closedform-AB}}.

\textbf{Case (A).} At $t = 0$, we have a constant initial density $\rho_{\text{init}}$ along road $1$, and zero density along all the other roads. Hence, at the left junction as in Figure \ref{fig:toy}, since $\rho_{1,0} = \rho_{\text{init}} \in (0,\sigma_1)$ and $\rho_{2,0} = \rho_{4,0} = 0$, by \eqref{PDE7} and \eqref{PDE8}, we have $c_1 = \gamma_{1,0} = f_1(\rho_{\text{init}}) < \sigma_1, c_2 = f_2(\sigma_2) = f_{c,2},$ and $c_4 = f_4(\sigma_4) = f_{c,4}$. Furthermore, we have \begin{equation}\label{example_eq2}
A = \begin{pmatrix}
\alpha \\ 1- \alpha
\end{pmatrix}
\end{equation}
and by \eqref{opt-sol1} and \eqref{opt-sol2}, $r_1 = \frac{\alpha f_1(\rho_{\text{init}})}{f_{c,2}}$, $r_2 = \frac{(1-\alpha) f_1(\rho_{\text{init}})}{f_{c,4}}$, and hence
\begin{equation}\label{example_eq3}
r :=\max\left\{\frac{ \alpha f_1(\rho_{\text{init}})}{f_{c,2}}, \frac{(1-\alpha)f_1(\rho_{\text{init}})}{f_{c,4}}\right\}.
\end{equation}
By assumption (II), $r < 1$ and hence we are in case (i) of Theorem \ref{opt-sol}. This implies by \eqref{PDE18} that $\hat{\gamma}_1 = f_1(\rho_{\text{init}})$, $\hat{\gamma}_2 = \alpha f_1(\rho_{\text{init}})$, and $\hat{\gamma}_4 = (1-\alpha)f_1(\rho_\text{init})$, and hence by assumption (II) and the fact that $\alpha \in (0,1)$, $\hat{\rho}_1 = \rho_\text{init}, \hat{\rho}_2 = \frac{\alpha f_1(\rho_\text{init})}{v_2}$, and $\hat{\rho}_4 = \frac{(1-\alpha) f_1(\rho_\text{init})}{v_4}$.

Next, let us focus our attention to each road segment connected to the left junction.
\begin{itemize}
\item Along road $1$, we have $\rho_{1,0} = \hat{\rho}_1 = \rho_\text{init}$. This implies that the density is constant along the road even up and including the junction point, and thus there are no waves propagating forward and/or backwards.
\item Along road $2$, we have $\rho_{2,0} = 0$ and $\hat{\rho}_2 = \frac{\alpha f_1(\rho_\text{init})}{v_2} < \sigma_2$ by assumption (II). This implies that on both sides of the junction, the density is in the uncongested regime. This implies that $f'(\rho_{2,0}) = f'(\hat{\rho}_2) = v_2$, and hence we have a travelling wave with speed $v_2$ propagating down road 2. A similar argument for road $4$ with wave speed of $v_4$ holds too.
\end{itemize}

Since $v_2 < v_4$ and roads 2 and 4 have the same total length $L$, information along road $4$ will first propagate and hit the right boundary, at time $t = \frac{L}{v_4}$, we have $\rho_{4,0} = \frac{(1-\alpha) f_1(\rho_\text{init})}{v_4}$, $\rho_{2,0} = 0$ and $\rho_{5,0} = 0$. Since $\rho_{4,0} < \sigma_4$ by (II), we then have at the right junction, $c_2 = 0, c_4 = (1-\alpha) f_1(\rho_\text{init})$, and $c_5 = f_{c,5}$, with $A = \begin{pmatrix}
1 & 1
\end{pmatrix}.$ The only value of $r$ to compute is thus $r = \frac{c_2 + c_4}{c_5} = \frac{(1-\alpha)f_1(\rho_\text{init})}{ f_{c,5}}$. Since $f_{c,5} = f_{c,1}$ by (V), we have $r = \frac{(1-\alpha)f_1(\rho_\text{init})}{f_{c,1}}$. Since $f_1(\rho_\text{init}) < f_{c,1}$, then $r < 1$. By Theorem \ref{opt-sol}, we have $\hat{\gamma}_2 = 0$, $\hat{\gamma_4} = (1-\alpha) f_1(\rho_\text{init})$, and $\hat{\gamma_5} = \hat{\gamma}_2 + \hat{\gamma}_4 = (1-\alpha) f_1(\rho_\text{init})$. By (V), $\hat{\rho}_2 = 0, \hat{\rho}_4 = \frac{(1-\alpha) f_1(\rho_\text{init})}{v_4}$, and $\hat{\rho}_5 = \frac{(1-\alpha) f_1(\rho_\text{init})}{v_1}$.

As per usual, we will focus our attention back to each road segment connected to the right junction.
\begin{itemize}
\item Along road 5, we have $\rho_{5,0} = 0$ and $\hat{\rho}_5 = \frac{(1-\alpha) f_1(\rho_\text{init})}{v_1} < \sigma_1 = \sigma_5$. This implies that on both sides of the junction, the density is in the uncongested regime, and hence the wave speed on both sides would be $v_1.$
\item Along road $2$, we have $\rho_{2,0} = \hat{\rho}_2 = 0$ so the road is still empty for the region close to the junction.
\item Along road 4, we have $\hat{\rho}_4 = \rho_{4,0} = \frac{(1-\alpha)f_1(\rho_\text{init})}{v_4}$, and they are both in the uncongested regime. This implies that on both sides of the junction, the wave speed would be the same at $v_4$. Hence, no new waves are propagated back along road 4.
\end{itemize}

Moving on, at $t = \frac{L}{v_2} > \frac{L}{v_4}$, the wave from the left junction along road 2 starts to hit the right junction. Henceforth, we have $\rho_{2,0} = \frac{\alpha f_1(\rho_\text{init})}{v_2}$, $\rho_{4,0} = \frac{(1-\alpha)f_1(\rho_\text{init})}{v_4}$, and $\rho_{5,0} = \frac{(1-\alpha)f_1(\rho_\text{init})}{v_1}$. Since these densities are in the uncongested regime, we have $c_2 = \alpha f_1(\rho_\text{init})$ and $c_4 = (1-\alpha) f_1(\rho_\text{init})$, and $c_5 = f_{c,5}$ by \eqref{PDE7} and \eqref{PDE8}. With $A = \begin{pmatrix}
1 & 1
\end{pmatrix}$ and (V), the corresponding value of $r$ is given by $r = \frac{c_2 + c_4}{c_5} = \frac{f_1(\rho_\text{init})}{f_{5,c}} = \frac{f_1(\rho_\text{init})}{n_5 f_{1,c}} \leq \frac{f_1(\rho_\text{init})}{f_{1,c}} < 1.$ By Theorem \ref{opt-sol}, we have $\hat{\gamma}_2 = \alpha f_1(\rho_\text{init}), \hat{\gamma}_4 = (1-\alpha)f_1(\rho_\text{init})$ and hence $\hat{\gamma_5} = f_1(\rho_\text{init})$. This implies that $\hat{\rho}_2 = \rho_{2,0}, \hat{\rho}_4 = \rho_{4,0},$ while $\hat{\rho}_5 = \frac{f_1(\rho_\text{init})}{v_1} = \rho_{\text{init}}$.

Zooming in onto each road on the right junction, we then have the following:
\begin{itemize}
    \item Along roads 2 and 4, since $\rho_{i,0} = \hat{\rho}_i$ for $i \in \{2,4\}$ and they are both in the uncongested regime, then this phenomenon persists with no waves sent backwards along any of these roads.
    \item Along road 5, even though $\rho_{5,0} \neq \hat{\rho}_5$, the densities are both in the uncongested regime, which implies that no waves gets propagated forward.
\end{itemize}

Last but not least, notice that along road 5, for all $t > \frac{L}{v_4}$ (including when $t \in \ml \frac{L}{v_4}, \frac{L}{v_2}\mr$ and $t > \frac{L}{v_2}$), the wave speeds carrying the difference in densities are constant at $v_5 = v_1$. Hence, the first instance for which the wave carrying information about $\rho_5 \neq 0$ propagating from the right junction is at $t = \frac{L}{v_4} + \frac{L_5}{v_5}$. Since the right boundary of the exit road has a non-reflecting boundary condition, then there are no waves sent back along the road itself. Once both waves about the difference in density across it reaches the right boundary, this density distribution along road 5 would be a stationary state at the constant value of $\frac{f_1(\rho_\text{init})}{v_1} = \frac{v_1 \rho_\text{init}}{v_1} = \rho_\text{init}$.

Summarizing our analysis above, we have the following expressions for the density evolution on each of the roads: \\

For $t \in \ml 0 ,\frac{L}{v_4} \mr$, we have
\begin{equation}\label{example-eq4}
\begin{aligned}
\rho_1(t,x) &= \rho_\text{init} \quad &\text{ for } x \in (0,L_1)\\
\rho_2(t,x) &= \begin{cases}
\frac{\alpha f_1(\rho_\text{init})}{v_2} &\text{ for } x \in \ml 0, v_4 t\mr \\
0 &\text{ otherwise }
\end{cases} \\
\rho_4(t,x) &= \begin{cases}
\frac{(1-\alpha) f_1(\rho_\text{init})}{v_4} &\text{ for } x \in \ml 0, v_4 t\mr \\
0 &\text{ otherwise }
\end{cases} \\
\rho_5(t,x) &= 0  \quad &\text{ for } x \in (0,L_5).
\end{aligned}
\end{equation}
At $t \in \ml \frac{L}{v_4}, \frac{L}{v_2} \mr$, we have
\begin{equation}\label{example-eq5}
\begin{aligned}
\rho_1(t,x) &= \rho_\text{init} \quad &\text{ for } x \in (0,L_1)\\
\rho_2(t,x) &= \begin{cases}
\frac{\alpha f_1(\rho_\text{init})}{v_2} &\text{ for } x \in \ml 0, v_2 t\mr \\
0 &\text{ otherwise }
\end{cases} \\
\rho_4(t,x) &= \frac{(1-\alpha)f_1(\rho_\text{init})}{v_4} \quad &\text{ for } x \in (0,L) \\
\rho_5(t,x) &= \begin{cases}
\frac{(1-\alpha)f_1(\rho_\text{init})}{v_1} \quad &\text{ for } x \in \ml 0, \min
\left\{v_1\ml t - \frac{L}{v_4}\mr,L_5\right\} \mr \\
0 \quad &\text{ otherwise.}
\end{cases}
\end{aligned}
\end{equation}
Last but not least, we then have for $t > \frac{L}{v_2}$,
\begin{equation}\label{example-eq6}
\begin{aligned}
\rho_1(t,x) &= \rho_\text{init} \quad &&\text{ for } x \in (0,L_1)\\
\rho_2(t,x) &=
\frac{\alpha f_1(\rho_\text{init})}{v_2} &&\text{ for } x \in (0,L) \\
\rho_4(t,x) &=
\frac{(1-\alpha) f_1(\rho_\text{init})}{v_4} &&\text{ for } x \in \ml 0, L\mr  \\
\end{aligned}
\end{equation}
and
\begin{equation}\label{example-eq7}
\rho_5(t,x) = \begin{cases}
\rho_\text{init}, & x \in \bigl[0,\, \min\bigl\{v_1(t - \tfrac{L}{v_2}),\, L_5\bigr\}\bigr], \\[0.4em]
(1-\alpha)\rho_\text{init}, & x \in \bigl[\max\bigl\{v_1(t - \tfrac{L}{v_2}),\, L_5\bigr\},\,
\min\bigl\{v_1(t - \tfrac{L}{v_4}),\, L_5\bigr\}\bigr], \\[0.4em]
0, & \text{otherwise.}
\end{cases}
\end{equation}

To compute the number of cars exited, we then have

\begin{equation}\label{example-eq8}
\text{Cars Exited}(T) = \rho_\text{jam}\int_0^T f_5(L_5^-,t)\,\D t
\end{equation}
which evaluates to
\begin{equation}\label{example-eq8b}
= \begin{cases}
0, & t \in \bigl[0,\, \tfrac{L_5}{v_1} + \tfrac{L}{v_4}\bigr], \\[0.5em]
\rho_{\text{jam}}(1-\alpha)f_1(\rho_\text{init})\bigl(t - \tfrac{L_5}{v_1} - \tfrac{L}{v_4}\bigr), & t \in \bigl[\tfrac{L_5}{v_1} + \tfrac{L}{v_4},\, \tfrac{L_5}{v_1} + \tfrac{L}{v_2}\bigr], \\[0.5em]
\rho_{\text{jam}}f_1(\rho_\text{init})\bigl(t - \tfrac{L}{v_2} - \tfrac{L}{v_4}\bigr) \\
\quad +\, \rho_{\text{jam}}(1-\alpha)f_1(\rho_\text{init})\bigl(\tfrac{L}{v_2} - \tfrac{L}{v_4}\bigr), & t > \tfrac{L_5}{v_1} + \tfrac{L}{v_2}.
\end{cases}
\end{equation}

In other words, this simplifies to the expression in \eqref{example-CarsExited}.

To compute the weighted time integrated cars on the road, we first compute the relevant weights from the directed graph as shown in Figure \ref{fig:toy}. As road $5$ is the exit road, we have $d_5 = 0, d_3 = d_4 = 1, d_2 = d_1 = 2.$ Next, we proceed by computing this on a per-road basis.

For road 1, we have
\begin{equation}\label{example-W1}
W_1(T) = \frac{1}{4}\int_0^{L_1}\int_0^T \rho_\text{init} \D t \D x = \frac{\rho_\text{init}TL_1}{4}.
\end{equation}

For roads 2 and 3, we first break down the contribution of the density along the whole role into roads 2 and 3 individually. This yields
\begin{equation}\label{example-eq10}
\rho_2(x,t) = \begin{cases}
\frac{\alpha f_1(\rho_\text{init})}{v_2} &\text{ for } x \in \ml 0, \max\{v_2t,\frac{L}{2}\}\mr, \\
0 &\text{ otherwise}
\end{cases}
\end{equation}
and
\begin{equation}\label{example-eq11}
\rho_3(x,t) = \begin{cases}
\frac{\alpha f_1(\rho_\text{init})}{v_2} &\text{ for } x \in \ml \min\left\{0,v_2 \ml t - \frac{L}{2v_2}\mr\right \}, \max\left \{v_2 \ml t - \frac{L}{2v_2}\mr,\frac{L}{2}\right\}\mr, \\
0 &\text{ otherwise}.
\end{cases}
\end{equation}
Together with $f_1(\rho_\text{init}) = v_1 \rho_\text{init}$, these then imply
\begin{equation}\label{example-W2}
W_2(T) =
\frac{\alpha v_1\rho_{\text{init}}}{8}\,\min\!\left\{T,\frac{L}{2v_2}\right\}^2
+
\frac{\alpha v_1\rho_{\text{init}}}{8v_2}\,L\,\Big(T-\frac{L}{2v_2}\Big)_+
\end{equation}
and
\begin{equation}\label{example-W3}
\begin{aligned}
W_3(T)
&=
\frac{\alpha v_1\rho_{\text{init}}}{2}\left[
\frac{1}{2}\Big(T-\frac{L}{2v_2}\Big)_+^2
\frac{1}{2}\Big(T-\frac{L}{v_2}\Big)_+^2
\right] \\
&-
\frac{\alpha v_1\rho_{\text{init}}}{4v_2}L\Big(T-\frac{L}{2v_2}\Big)_+
+
\frac{\alpha v_1\rho_{\text{init}}}{4v_2}L\Big(T-\frac{L}{v_2}\Big)_+.
\end{aligned}
\end{equation}
One can perform a similar argument to show that
\begin{equation}\label{example-W4}
\begin{aligned}
W_4(T)
=
\frac{(1-\alpha)v_1\rho_{\text{init}}}{4}\,\min\!\left\{T,\frac{L}{v_4}\right\}^2
+
\frac{(1-\alpha)v_1\rho_{\text{init}}}{2v_4}\,L\,\Big(T-\frac{L}{v_4}\Big)_+.
\end{aligned}
\end{equation}

For road 5, using the min/max cases in the expression for the density on the road from \eqref{example-eq7}, one can work through the double integral and obtain
\begin{equation}\label{example-W5}
\begin{aligned}
W_5(T)
&=
(1-\alpha)\rho_{\text{init}}\left[
\frac{1}{2}\min\!\left\{\Big(T-\frac{L}{v_4}\Big)_+,\,\frac{L_5}{v_1}\right\}^2
+\frac{L_5}{v_1}\left(\Big(T-\frac{L}{v_4}\Big)_+ - \frac{L_5}{v_1}\right)_+
\right]\\
&\quad+
\alpha\rho_{\text{init}}\left[
\frac{1}{2}\min\!\left\{\Big(T-\frac{L}{v_2}\Big)_+,\,\frac{L_5}{v_1}\right\}^2
+\frac{L_5}{v_1}\left(\Big(T-\frac{L}{v_2}\Big)_+ - \frac{L_5}{v_1}\right)_+
\right].
\end{aligned}
\end{equation}

\textbf{Case (B).}
Similar to Case (A), we first investigate what happens at $t = 0$ on the left boundary. The only difference here is that $\rho_{1,0} = \rho_\text{init} \in (\sigma_1,1)$, and hence $c_1 = f_{1,c}$. The equivalent expression for $r$ as to \eqref{example_eq3} is given by
\begin{equation}\label{example-eq12}
r := \max\left\{\frac{ \alpha f_{c,1}}{f_{c,2}}, \frac{(1-\alpha)f_{c,1}}{f_{c,4}}\right\}.
\end{equation}
By (II'), $r < 1$ so we are in the same case in Theorem \eqref{opt-sol}. This then implies that $\hat{\gamma}_1 = f_{c,1}$ and thus $\hat{\gamma}_2 = \alpha f_{c,1}$, and $\hat{\gamma}_4 = (1-\alpha)f_{c,1}.$ Henceforth, we have $\hat{\rho}_1 = \sigma_1$, $\hat\rho_2 = \frac{\alpha f_{c,1}}{v_2}$, and $\hat\rho_4 = \frac{(1-\alpha) f_{c,1}}{v_4}$. By (II'), these new densities at the junction along roads 2 and 4 lie in the uncongested regime and thus the same traveling wave dynamics propagate along roads 2 and 4 at constant speeds of $v_2$ and $v_4$.

The evolution on roads $2,4,$ and $5$ then follows the same arguments as in Case (A), and can be obtained from \eqref{example-eq4}--\eqref{example-eq7} by replacing $\rho_\text{init}$ with $\sigma_1$ and hence $f_1(\rho_\text{init})$ with $f_{c,1}$.

To determine the dynamics on road $1$, note that the left state is $\rho_{\text{init}}$ while the junction state is $\hat{\rho}_1 = \sigma_1$. Since $\rho_{\text{init}} > \sigma_1$ and the flux is concave, this Riemann problem produces a backward-moving rarefaction fan connecting these two states. Since the transitional region lies in the congested regime, we then have $\ml f_1'\mr^{-1}(\xi) =  \sigma_1 - \frac{\xi}{2A_1}$. Hence, we have
\begin{equation}\label{example-eq13}
\rho_1(t,x) = \begin{cases}
\rho_\text{init} \quad &\text{ for } x \in (0,\max\{0,L_1 - v_1t\}), \\
\sigma_1 - \frac{x-L_1}{2A_1t} \quad &\text{ for } x \in (\max\{0,L_1 - v_1t \},L_1).
\end{cases}
\end{equation}
As the rarefaction wave hits the left boundary along road 1, our choice of boundary condition thus switches to absorb the incoming wave as described in Section \ref{numerical-scheme}. Consequently, the expression for cars exited up till time $T$ remains unchanged as in \eqref{example-eq8} apart from mapping $\rho_\text{init}$ to $\sigma_1$. Furthermore, we do the same mapping for $W_2$ to $W_5$. However, for $W_1,$ we compute the integral directly from \eqref{example-eq13}, given by
\begin{equation}\label{example-eq14}
\int_0^{L_1}\rho_1(t,x)\,\D x
=
\begin{cases}
\rho_{\text{init}}(L_1-v_1t)+\sigma_1v_1t+\frac{v_1^2}{4A_1}\,t, \quad &0<t\leq \frac{L_1}{v_1},\\
\sigma_1L_1+\frac{L_1^2}{4A_1t}, \quad &t>\frac{L_1}{v_1}.
\end{cases}
\end{equation}
Therefore,
\begin{equation}\label{example-W1-caseB}
W_1^{(B)}(T)
=
\frac{1}{4}
\begin{cases}
\rho_{\text{init}}L_1T
+
\dfrac{1}{2}\!\left(v_1(\sigma_1-\rho_{\text{init}})+\dfrac{v_1^2}{4A_1}\right)T^2,
& 0\le T\le \dfrac{L_1}{v_1},\\[1.5em]
\rho_{\text{init}}\dfrac{L_1^2}{v_1}
+
\dfrac{1}{2}\!\left(v_1(\sigma_1-\rho_{\text{init}})+\dfrac{v_1^2}{4A_1}\right)\!\left(\dfrac{L_1}{v_1}\right)^{\!2}
\\[0.5em]
\quad +\,
\sigma_1L_1\!\left(T-\dfrac{L_1}{v_1}\right)
+
\dfrac{L_1^2}{4A_1}\ln\!\left(\dfrac{Tv_1}{L_1}\right),
& T>\dfrac{L_1}{v_1}.
\end{cases}
\end{equation}
\qed

\begin{figure}[htbp]
\centering

\begin{minipage}[t]{0.48\textwidth}
    \centering
    \includegraphics[width=\linewidth]{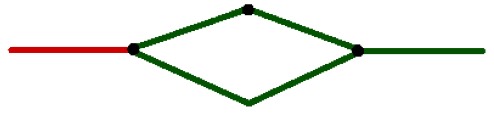}
\end{minipage}\hfill
\begin{minipage}[t]{0.48\textwidth}
    \centering
    \includegraphics[width=\linewidth]{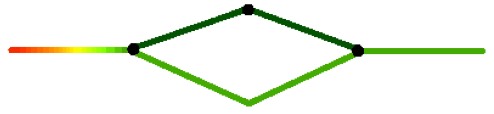}
\end{minipage}

\vspace{0.5em}

\begin{minipage}[t]{0.48\textwidth}
    \centering
    \includegraphics[width=\linewidth]{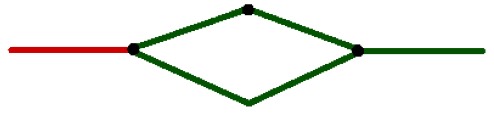}
\end{minipage}\hfill
\begin{minipage}[t]{0.48\textwidth}
    \centering
    \includegraphics[width=\linewidth]{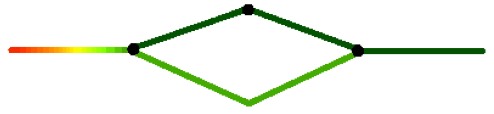}
\end{minipage}
\caption{Snapshots for the congested toy network with only road 1 initialized ($\rho_{\text{init}}=0.9$) at $t = 0$ seconds (left) and $t = 1000$ seconds (right), for $n_5 = 1$ (top) and $n_5 = 2$ (bottom).}
\label{fig:toy_empty}
\end{figure}

\subsubsection*{Proposition~\ref{closedform-CD-full}: Closed-Form Expressions for Experiments (C) and (D)}

\begin{proposition}\label{closedform-CD-full}
(Closed-Form Expressions for Experiments (C) and (D).) Suppose conditions~(i)--(iii) from Section~\ref{section:results} hold, with road $2$ denoting the extended road representing roads $2$ and $3$.
\begin{itemize}
\item[\textbf{(C)}] Under uncongested initial data $\rho_{\text{init}} < \sigma_i$ for $i\in\{1,2,4\}$ with road~5 empty, and additionally assuming
\begin{equation}\label{assump-C-exit-not-binding}
\max\left\{ f_2(\rho_{\text{init}})+f_4(\rho_{\text{init}}),\ f_2(\rho_{\text{init}})+(1-\alpha)f_1(\rho_{\text{init}}),\ f_1(\rho_{\text{init}})\right\} < f_{c,5},
\end{equation}
the right-junction outflow rate is piecewise constant:
\begin{equation}\label{gamma5-C}
\gamma_5(t) =
\begin{cases}
f_2(\rho_{\text{init}})+f_4(\rho_{\text{init}}), & 0\le t < \tzero,\\
f_2(\rho_{\text{init}})+(1-\alpha)f_1(\rho_{\text{init}}), & \tzero \le t < \tone,\\
f_1(\rho_{\text{init}}), & t\ge \tone,
\end{cases}
\end{equation}
and the cars exited satisfy
\begin{equation}\label{example-CarsExited-C}
\text{Cars Exited}(T)
=
\rho_{\text{jam}}\int_0^{(T-\tauFive)_+}\gamma_5(t)\,\D t.
\end{equation}
Equivalently, with $(s)_+ := \max\{0,s\}$,
\begin{equation}\label{example-CarsExited-C-kinks}
\begin{aligned}
\text{Cars Exited}(T)
=\rho_{\text{jam}}\Big[&
\left(f_2(\rho_{\text{init}})+f_4(\rho_{\text{init}})\right)\Big(T-\tauFive\Big)_+ \\
&+\left((1-\alpha)f_1(\rho_{\text{init}})-f_4(\rho_{\text{init}})\right)\Big(T-\tauFive-\tzero\Big)_+ \\
&+\left(\alpha f_1(\rho_{\text{init}})-f_2(\rho_{\text{init}})\right)\Big(T-\tauFive-\tone\Big)_+\Big].
\end{aligned}
\end{equation}
Moreover, $\hat{\rho}_5(t)<\sigma_5$ for all $t\ge 0$, so road $5$ does not saturate.

\item[\textbf{(D)}] Under congested initial data $\rho_{\text{init}}>\sigma_i$ for $i\in\{1,2,4\}$ with road~5 initially empty ($\rho_5(0,x)\equiv 0$), the exit throughput is
\begin{equation}\label{gamma5-D-def}
\gamma_5=\min\{f_{c,2}+f_{c,4},\,f_{c,5}\}=\min\{f_{c,2}+f_{c,4},\,n_5 f_{c,1}\}.
\end{equation}
The incoming fluxes from roads $2$ and $4$ at the right junction are
\begin{equation}\label{gamma24-D}
(\gamma_2,\gamma_4)=
\begin{cases}
\left( \frac{n_5}{n_5^*}f_{c,2},\ \frac{n_5}{n_5^*}f_{c,4}\right), & n_5<n_5^\star, \\
(f_{c,2},f_{c,4}), & n_5\ge n_5^\star.
\end{cases}
\end{equation}
Under the non-depletion assumption ($T \le \min\{\rho_{\text{init}}L/\gamma_2,\,\rho_{\text{init}}L/\gamma_4\}$), $\gamma_5$ is constant on $[0,T]$ and integrating gives
\begin{equation}\label{example-CarsExited-D}
\text{Cars Exited}(T)=\rho_{\text{jam}}\,\Big(T-\tauFive\Big)_+ \times \begin{cases}
n_5 f_{c,1} \quad &\text{ for } n_5 < n_5^*,  \\
f_{c,2} + f_{c,4} \quad &\text{ for } n_5 > n_5^*.\\
\end{cases}
\end{equation}
In particular, the phase transition at $n_5^\star$ in Proposition~\ref{closedform-CD} follows from \eqref{gamma5-D-def}.
\end{itemize}
\end{proposition}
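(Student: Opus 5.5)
The plan is to follow the wave structure junction by junction, as in the proof of Proposition~\ref{closedform-AB}. At each time where a wave reaches a junction, we recompute the capacities $c_i$ from \eqref{PDE7}--\eqref{PDE8} and apply Theorem~\ref{opt-sol}. As there, roads 2 and 3 are merged into a single road 2 of length $L$. At the right junction the preference matrix is $A=(1\ \ 1)$, so $r=(c_2+c_4)/c_5$.

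For \textbf{(C)}, at $t=0$ all boundary states are uncongested. The right junction therefore sees $c_2=f_2(\rho_{\text{init}})$, $c_4=f_4(\rho_{\text{init}})$ and $c_5=f_{c,5}$. The first entry of \eqref{assump-C-exit-not-binding} gives $r<1$, so case (i) applies and $\gamma_5=f_2(\rho_{\text{init}})+f_4(\rho_{\text{init}})$. Meanwhile the left junction is treated exactly as in Case (A). Road 1 is uncongested, and roads 2 and 4 start uncongested, so they have capacities $f_{c,2}$ and $f_{c,4}$. We need $\alpha f_1(\rho_{\text{init}})\le f_{c,2}$ and $(1-\alpha)f_1(\rho_{\text{init}})\le f_{c,4}$; we expect these to follow from $\rho_{\text{init}}<\sigma_i$ and the parameter relations, but this must be checked. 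Given them, contact discontinuities carrying the states $\alpha f_1(\rho_{\text{init}})/v_2$ and $(1-\alpha)f_1(\rho_{\text{init}})/v_4$ leave the left junction. These are linear-branch states, so they travel at the constant speeds $v_2$ and $v_4$.

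They reach the right junction at $L/v_4$ and $L/v_2$ respectively. At each arrival we recompute $c_4$, then $c_2$, and use the second and third entries of \eqref{assump-C-exit-not-binding} to stay in case (i). This gives \eqref{gamma5-C}. Every outgoing state on road 5 lies on the linear branch, so $\hat\rho_5=\gamma_5/v_1<\sigma_5$ and the road never saturates. All information then travels down road 5 at speed $v_1$, reaching $x=L_5$ after a delay $L_5/v_1$. This yields \eqref{example-CarsExited-C}, and expanding the piecewise-constant integral gives \eqref{example-CarsExited-C-kinks}.

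For \textbf{(D)}, roads 2 and 4 are congested at the right junction, so $c_2=f_{c,2}$ and $c_4=f_{c,4}$ by \eqref{PDE7}. Road 5 starts empty, so $c_5=n_5f_{c,1}$ by \eqref{PDE8} and condition (iii). Theorem~\ref{opt-sol} then splits into two cases.
\begin{itemize}
\item If $n_5\ge n_5^\star$, then $r\le1$ and case (i) gives $(\gamma_2,\gamma_4)=(f_{c,2},f_{c,4})$.
\item If $n_5<n_5^\star$, then $r>1$ and $c_{\text{in}}>c_{\text{out}}$, so case (iii) gives $\mu=c_{\text{out}}/c_{\text{in}}=n_5/n_5^\star$.
\end{itemize}
This proves \eqref{gamma5-D-def} and \eqref{gamma24-D}.

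The outgoing state on road 5 has flux $\gamma_5\le f_{c,5}$ and lies on the linear branch, so it propagates at speed $v_1$. Hence the flux at $x=L_5$ is $0$ before time $L_5/v_1$ and $\gamma_5$ afterward, which gives \eqref{example-CarsExited-D}. On roads 2 and 4, the boundary states sit at or above $\sigma_i$ and are connected to $\rho_{\text{init}}$ by backward rarefactions or shocks. These waves leave the junction fluxes unchanged for as long as the upstream congested reservoir persists.

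The main obstacle is justifying that the capacities $c_i$ stay constant on $[0,T]$. Two things could change them. First, the left junction may feed roads 2 and 4 with a flux different from $\gamma_2,\gamma_4$, and a resulting wave might reach the right junction and alter $c_2$ or $c_4$. Second, the congested queues may empty. The non-depletion hypothesis $T\le\rho_{\text{init}}L/\gamma_i$ is designed to exclude the second effect: the mass $\rho_{\text{init}}L$ initially on each road can sustain outflow $\gamma_i$ at least until $T$. I would first try to show that the left junction keeps the interior of roads 2 and 4 congested, meaning any wave it emits has flux at least $\gamma_i$ and keeps the states above $\sigma_i$. If that fails, I would state the result conditionally on no such wave arriving before $T$, in line with the proposition's phrase "as long as the initial queues have not depleted."
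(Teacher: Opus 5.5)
Your strategy is the paper's: recompute the capacities at each wave arrival and read off the case of Theorem~\ref{opt-sol} at the right junction with $A=(1\ \ 1)$. Your case split in (D), including $\mu=n_5/n_5^\star$ and the delay $L_5/v_1$ on road~5, reproduces its computation. The two points you leave open are genuine gaps. The paper's proof also passes over them, writing only ``since the outgoing roads can accommodate the split'' and ``for as long as roads 2 and 4 remain congested''. Neither gap closes the way you propose.

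In (C), the split condition $\alpha f_1(\rho_{\text{init}})\le f_{c,2}$, $(1-\alpha)f_1(\rho_{\text{init}})\le f_{c,4}$ does \emph{not} follow from $\rho_{\text{init}}<\sigma_i$, conditions (i)--(iii) and \eqref{assump-C-exit-not-binding}. With the toy values, $\sigma_1=20<\sigma_4=25<\sigma_2\approx 26.7$ veh/mi, so $\rho_{\text{init}}$ may sit just below $\sigma_1$ with $f_1(\rho_{\text{init}})\approx 500$. Take $\alpha=0.9$ and $n_5=2$: every stated hypothesis holds, yet $\alpha f_1(\rho_{\text{init}})\approx 450>f_{c,2}=400$. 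The left junction is then in case (ii), so road~4 carries $(c_{\text{in}}/c_{\text{out}})f_{c,4}\approx 278$ rather than $(1-\alpha)f_1(\rho_{\text{init}})\approx 50$. The middle value in \eqref{gamma5-C} is therefore wrong. The split condition must be imposed as a hypothesis (e.g.\ condition (II) of Proposition~\ref{closedform-AB}), not derived.

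In (D), your first plan cannot work in general. Once the backward rarefactions reach the left junction, the capacities of roads 2 and 4 there rise toward $f_{c,2},f_{c,4}$. When $f_{c,2}+f_{c,4}>f_{c,1}$ (as in the toy network) and $n_5\ge n_5^\star$, the left junction leaves case (iii) and injects free-flow states into roads 2 and 4. The hypotheses do not prevent this from happening before $T$. Your fallback adds an assumption the statement does not contain.

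The missing idea is anisotropy. For concave $f$ with $f(0)=0$ one has $f'(\rho)\le f(\rho)/\rho$, and every entropy shock moves no faster than the vehicles on either side of it. Hence no signal emitted by the left junction at $t>0$ can overtake the last vehicle initially on road $i$, which starts at $x=0$. The argument then runs as follows.
\begin{itemize}
\item Ahead of that vehicle the solution is just the backward rarefaction from $\rho_{\text{init}}$ to $\hat\rho_i\ge\sigma_i$. So $c_i=f_{c,i}$ at the right end and the junction fluxes stay constant.
\item The $\rho_{\text{init}}L$ vehicles ahead of it leave only through the right end, at rate $\gamma_i$. So it stays on road $i$ exactly until $t=\rho_{\text{init}}L/\gamma_i$.
\item Apply this to roads 2 and 4 simultaneously. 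They interact only through the right junction, whose capacities stay fixed while both of these vehicles remain.
\end{itemize}
This yields constancy on $[0,\min_i\rho_{\text{init}}L/\gamma_i]$, which is precisely the stated non-depletion window.
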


\begin{proof}
\textit{Experiment (C).} At $t=0$, by the uncongested initial data, \eqref{PDE7} and \eqref{PDE8} give $c_1=f_1(\rho_{\text{init}})$, $c_2=f_{c,2}$, and $c_4=f_{c,4}$. Since the outgoing roads can accommodate the split, we are in case (i) of Theorem~\ref{opt-sol} at the left junction, giving outgoing fluxes $\hat{\gamma}_2=\alpha f_1(\rho_{\text{init}})$ and $\hat{\gamma}_4=(1-\alpha)f_1(\rho_{\text{init}})$. This induces traveling waves on roads $2$ and $4$ propagating right with speeds $v_2$ and $v_4$.

At the right junction, take $A=(1\ \ 1)$. For $0\le t<\tzero$, the boundary states on roads $2$ and $4$ are still $\rho_{\text{init}}$, hence $c_2=f_2(\rho_{\text{init}})$ and $c_4=f_4(\rho_{\text{init}})$. Under \eqref{assump-C-exit-not-binding}, $r\le 1$ so $\gamma_5(t)=f_2(\rho_{\text{init}})+f_4(\rho_{\text{init}})$. For $\tzero\le t<\tone$, the boundary state on road $4$ switches, giving $c_4=(1-\alpha)f_1(\rho_{\text{init}})$; again $r\le 1$ so $\gamma_5(t)=f_2(\rho_{\text{init}})+(1-\alpha)f_1(\rho_{\text{init}})$. For $t\ge \tone$, both boundary states have switched, giving $\gamma_5(t)=f_1(\rho_{\text{init}})$. This establishes \eqref{gamma5-C}. Since road $5$ is initially empty and remains in free flow, information reaches the exit after travel time $\tauFive$, yielding \eqref{example-CarsExited-C} and \eqref{example-CarsExited-C-kinks}.

\textit{Experiment (D).} Under conditions~(i)--(iii) with congested initial data ($\rho_{\text{init}}>\sigma_i$ for $i\in\{1,2,4\}$) and road~5 initially empty. At time $t=0$ and for as long as roads $2$ and $4$ remain congested at the right junction, the incoming capacities are $c_2=f_{c,2}$ and $c_4=f_{c,4}$ by \eqref{PDE7}. Since road $5$ is initially empty, $c_5=f_{c,5}=n_5 f_{c,1}$ by \eqref{PDE8}. With $A=(1\ \ 1)$, we have
$$
r=\frac{c_2+c_4}{c_5}=\frac{f_{c,2}+f_{c,4}}{n_5 f_{c,1}},
$$
so $r\le 1$ is equivalent to $n_5\ge n_5^\star$ as in \eqref{n5-star}. If $n_5\ge n_5^\star$, Theorem~\ref{opt-sol}(i) gives $\gamma_5=c_2+c_4=f_{c,2}+f_{c,4}<f_{c,5}$ (strict if $n_5>n_5^\star$), so the exit does not saturate. If $n_5<n_5^\star$, then $c_{\text{in}}>c_{\text{out}}$ and Theorem~\ref{opt-sol}(iii) yields $\gamma_5=c_5=f_{c,5}$, i.e., the exit saturates. This establishes \eqref{gamma5-D-def}--\eqref{example-CarsExited-D}.
\end{proof}

\subsubsection*{Detailed Rarefaction Analysis for Experiment (D)}

The backward rarefaction fans along roads $2$ and $4$ in Figure \ref{fig:toy_flooded} become clearly visible only after the phase transition at $n_5^\star$. For $n_5>n_5^\star$, we are in case (i) of Theorem \ref{opt-sol} so $\gamma_i=f_{c,i}$ for $i\in\{2,4\}$, hence $\hat{\rho}_i=\sigma_i$, producing a large drop from the congested interior state $\rho_{\text{init}}=0.9$ to
\begin{equation*}
\sigma_2 = \frac{f_{c,2}}{v_2} = \frac{(400/\rho_\text{jam})}{15}=\frac{2}{15}\approx 0.133,
\qquad
\sigma_4=\frac{f_{c,4}}{v_4}=\frac{(500/\rho_\text{jam})}{20}=\frac{2.5}{20}=0.125,
\end{equation*}
where $\rho_\text{jam}=200$ is the jam density used in the toy simulations (see \eqref{jam-density}). In contrast, for $n_5<n_5^\star$ we are in case (iii) of Theorem \ref{opt-sol}, so $(\gamma_2,\gamma_4)$ are reduced as in \eqref{gamma24-D}; taking the admissible congested-branch choice $\hat{\rho}_i=\ml f_i^{-1}\mr^{+}(\gamma_i)\in[\sigma_i,1]$ yields junction states that remain in the congested regime. For example, when $n_5=1$ this gives $\hat{\rho}_2\approx 0.71$ and $\hat{\rho}_4\approx 0.71$, much closer to $\rho_{\text{init}}=0.9$ than to $\sigma_i$. Therefore the corresponding backward rarefaction is comparatively weak, whereas after the phase transition it spans a much wider density range and is visually pronounced.

To verify that the backward waves are rarefactions rather than shocks, note that a backward shock would require $\hat{\rho}_i>\rho_{\text{init}}$, equivalently $\gamma_i<f_i(\rho_{\text{init}})$. Using $A_i=-f_{c,i}/(1-\sigma_i)^2$ (see \eqref{jam-density}), we compute
\begin{equation}
\begin{aligned}
f_2(\rho_{\text{init}})&=f_{c,2}\left(1-\left(\frac{\frac{9}{10}-\frac{2}{15}}{1-\frac{2}{15}}\right)^2\right)
=\frac{147}{676}\,f_{c,2}, \text{ and } \\
f_4(\rho_{\text{init}})&=f_{c,4}\left(1-\left(\frac{\frac{9}{10}-\frac{1}{8}}{1-\frac{1}{8}}\right)^2\right)
=\frac{264}{1225}\,f_{c,4}.
\end{aligned}
\end{equation}
In the pre-transition case $n_5=1$, \eqref{gamma24-D} gives $\gamma_i=\frac{5}{9}f_{c,i}$. Since $\frac{5}{9}>\frac{147}{676}$ and $\frac{5}{9}>\frac{264}{1225}$, we have $\gamma_i>f_i(\rho_{\text{init}})$ and hence $\hat{\rho}_i<\rho_{\text{init}}$; therefore the backward waves are rarefactions. After the phase transition, $\gamma_i=f_{c,i}>f_i(\rho_{\text{init}})$ is immediate, and the rarefactions become much stronger because $\hat{\rho}_i=\sigma_i\ll\rho_{\text{init}}$.

\begin{landscape}
\section{Detailed Road Data}\label{appendix:roaddata}
The following tables provide the detailed road data used to compute the flux functions for each road in the network, along with the resulting normalized fluxes.
\begin{table}[H]
\centering
\scalebox{0.75}{ 
\begin{tabular} {|l|l|l|l|l|l|l|p{8cm}|} 
\hline 
& & (mi) & & (mi/hr) & & (veh/hr/lane) & (per 1 lane) \\ 
\# & Name & Length & Lanes & Speed Limit & Road Class & $f_{max}$ & Normalized Flux \\ 
\hline
Hwy30[0] & Hwy-30:& 0.01& 2 & 35 & Parkway & 875 & $f(\rho) = \begin{cases}
35 \rho, & 0< \rho \leq 0.125\\
-5.714\rho^2+1.429\rho+4.286, & 0.125 < \rho \leq 1
\end{cases}$ \\ 
&(source) $\rightarrow$ Prison St &(default) && && &\\
Hwy30[1] & Hwy-30: & 0.28 & 2 & 35 & Parkway & 875 & $f(\rho) = \begin{cases}
35 \rho, & 0< \rho \leq 0.125\\
-5.714\rho^2+1.429\rho+4.286, & 0.125 < \rho \leq 1
\end{cases}$ \\ 
&Prison St $\rightarrow$ Dickenson St&&& && &\\
Hwy30[2] & Hwy-30:& 0.16 & 2 & 35 & Parkway & 875 & $f(\rho) = \begin{cases}
35 \rho, & 0< \rho \leq 0.125\\
-5.714\rho^2+1.429\rho+4.286, & 0.125 < \rho \leq 1
\end{cases}$ \\
& Dickenson St $\rightarrow$ Lahainaluna Rd &&& && &\\
Hwy30[3] & Hwy-30: & 0.12 & 2 & 40 & Parkway & 1000 & $f(\rho) = \begin{cases}
40 \rho, & 0< \rho \leq 0.125\\
-6.531\rho^2+1.633\rho+4.898, & 0.125 < \rho \leq 1
\end{cases}$ \\ 
&Lahainaluna Rd $\rightarrow$ Papalaua St &&& && &\\
Hwy30[4] & Hwy-30: & 0.32 & 2 & 40 & Parkway & 1000 & $f(\rho) = \begin{cases}
40 \rho, & 0< \rho \leq 0.125\\
-6.531\rho^2+1.633\rho+4.898, & 0.125 < \rho \leq 1
\end{cases}$ \\ 
&Papalaua St $\rightarrow$ Kenui St&&& && &\\
Hwy30[5] & Hwy-30:& 0.17 & 2 & 40 & Parkway & 1000 & $f(\rho) = \begin{cases}
40 \rho, & 0< \rho \leq 0.125\\
-6.531\rho^2+1.633\rho+4.898, & 0.125 < \rho \leq 1
\end{cases}$ \\ 
&Kenui St $\rightarrow$ Keawe St &&& && &\\
Hwy30[6] & Hwy-30:& 0.66 & 2 & 40 & Parkway & 1000 & $f(\rho) = \begin{cases}
40 \rho, & 0< \rho \leq 0.125\\
-6.531\rho^2+1.633\rho+4.898, & 0.125 < \rho \leq 1
\end{cases}$ \\ 
&Keawe St $\rightarrow$ Front St &&& && &\\
Hwy30[7] & Hwy-30:  & 0.01 & 2 & 40 & Parkway & 1000 & $f(\rho) = \begin{cases}
40 \rho, & 0< \rho \leq 0.125\\
-6.531\rho^2+1.633\rho+4.898, & 0.125 < \rho \leq 1
\end{cases}$ \\ 
&Front St $\rightarrow$ (exit)&(default) && && &\\
\hline
\end{tabular}
}
\caption{Honopiilani Highway (Hwy30) road data used for Lahaina.}
\label{table:hwy30data}
\end{table}

\clearpage  
\begin{table}
\scalebox{0.75}{
\begin{tabular} {|l|l|l|l|l|l|l|l|}
\hline & &(mi) &  &(mi/hr) &  &(veh/hr/lane)   & (per 1 lane) \\
\# & Name & Length & Lanes & Speed Limit &  Road Class& $f_{max}$& Normalized Flux\\
\hline
Front[0] & Front:& 0.01&1& 20 & Major&500  & $f(\rho) = \begin{cases}
20 \rho, \quad&0< \rho \leq 0.125\\
-3.265\rho^2+0.816\rho+2.449, \quad& 0.125 < \rho \leq 1
\end{cases}$\\
& (source) $\rightarrow$ Prison St &(default)&& &Collector & &\\
Front[1] & Front: &0.06&1& 20 & Major&500  & $f(\rho) = \begin{cases}
20 \rho, \quad&0< \rho \leq 0.125\\
-3.265\rho^2+0.816\rho+2.449, \quad& 0.125 < \rho \leq 1
\end{cases}$\\
& Prison St $\rightarrow$ Canal St&&& &Collector & &\\
Front[2] & Front: &0.14&1& 20 & Major&500  & $f(\rho) = \begin{cases}
20 \rho, \quad&0< \rho \leq 0.125\\
-3.265\rho^2+0.816\rho+2.449, \quad& 0.125 < \rho \leq 1
\end{cases}$\\
& Canal St $\rightarrow$ Dickenson St&&& &Collector & &\\
Front[3] & Front: & 0.16&1& 20 & Major&500  & $f(\rho) = \begin{cases}
20 \rho, \quad&0< \rho \leq 0.125\\
-3.265\rho^2+0.816\rho+2.449, \quad& 0.125 < \rho \leq 1
\end{cases}$\\
& Dickenson St $\rightarrow$ Lahainaluna Rd&&& &Collector & &\\
Front[4] & Front:& 0.05 &1& 20 & Major&500  & $f(\rho) = \begin{cases}
20 \rho, \quad&0< \rho \leq 0.125\\
-3.265\rho^2+0.816\rho+2.449, \quad& 0.125 < \rho \leq 1
\end{cases}$\\
& Lahainaluna Rd $\rightarrow$ Wahie Ln &&& &Collector & &\\
Front[5] & Front:& 0.10 &1& 20 & Major&500  &$f(\rho) = \begin{cases}
20 \rho, \quad&0< \rho \leq 0.125\\
-3.265\rho^2+0.816\rho+2.449, \quad& 0.125 < \rho \leq 1
\end{cases}$\\
& Wahie Ln $\rightarrow$ Papalaua St &&& &Collector & &\\
Front[6] & Front:& 0.17 &1& 20 & Major&500  & $f(\rho) = \begin{cases}
20 \rho, \quad&0< \rho \leq 0.125\\
-3.265\rho^2+0.816\rho+2.449, \quad& 0.125 < \rho \leq 1
\end{cases}$\\
& Papalaua St $\rightarrow$ Baker St &&& &Collector & &\\
Front[7] & Front:& 0.17 &1& 20 & Major&500  & $f(\rho) = \begin{cases}
20 \rho, \quad&0< \rho \leq 0.125\\
-3.265\rho^2+0.816\rho+2.449, \quad& 0.125 < \rho \leq 1
\end{cases}$\\
& Baker St $\rightarrow$ Kenui St &&& &Collector & &\\
Front[8] & Front:& 0.10&1& 20 & Major&500  & $f(\rho) = \begin{cases}
20 \rho, \quad&0< \rho \leq 0.125\\
-3.265\rho^2+0.816\rho+2.449, \quad& 0.125 < \rho \leq 1
\end{cases}$\\
& Kenui St $\rightarrow$ Puunoa Pl &&& &Collector & &\\
Front[9] & Front:& 0.78 &1& 20 & Major&500  & $f(\rho) = \begin{cases}
20 \rho, \quad&0< \rho \leq 0.125\\
-3.265\rho^2+0.816\rho+2.449, \quad& 0.125 < \rho \leq 1
\end{cases}$\\
& Puunoa Pl $\rightarrow$ Hwy-30 &&& &Collector & &\\
\hline
\end{tabular}}
\caption{Front Street (Front) road data used for Lahaina.}
\label{table:Frontdata}
\end{table}

\begin{table}
\scalebox{0.75}{
\begin{tabular} {|l|l|l|l|l|l|l|l|}
\hline & &(mi) &  &(mi/hr) &  &(veh/hr/lane)   & (per 1 lane) \\
\# & Name & Length & Lanes & Speed Limit &  Road Class & $f_{max}$& Normalized Flux\\
\hline
Wainee[0] & Waine'e:& 0.01&1& 20 & Local&300  & $f(\rho) = \begin{cases}
20 \rho, \quad&0< \rho \leq 0.075\\
-1.753\rho^2+0.263\rho +1.490, \quad&  0.075 < \rho \leq 1
\end{cases}$\\
& (source) $\rightarrow$ Prison St &(default)&& & Street & &\\
Wainee[1] & Waine'e: &0.14&1& 20 & Local&300  &  $f(\rho) = \begin{cases}
20 \rho, \quad&0< \rho \leq 0.075\\
-1.753\rho^2+0.263\rho +1.490, \quad&  0.075 < \rho \leq 1
\end{cases}$\\
& Prison St $\rightarrow$ Hale St&&& & Street & &\\
Wainee[2] & Waine'e: &0.10&1& 20 & Local &300  &  $f(\rho) = \begin{cases}
20 \rho, \quad&0< \rho \leq 0.075\\
-1.753\rho^2+0.263\rho +1.490, \quad&  0.075 < \rho \leq 1
\end{cases}$\\
& Hale St $\rightarrow$ Dickenson St&&& & Street & &\\
Wainee[3] & Waine'e:& 0.11&1& 20 & Local&300  &  $f(\rho) = \begin{cases}
20 \rho, \quad&0< \rho \leq 0.075\\
-1.753\rho^2+0.263\rho +1.490, \quad&  0.075 < \rho \leq 1
\end{cases}$\\
& Dickenson St $\rightarrow$ Panaewa St &&& & Street & &\\
Wainee[4] & Waine'e:& 0.05&1& 20 & Local&300  &  $f(\rho) = \begin{cases}
20 \rho, \quad&0< \rho \leq 0.075\\
-1.753\rho^2+0.263\rho +1.490, \quad&  0.075 < \rho \leq 1
\end{cases}$\\
& Panaewa St $\rightarrow$ Lahainaluna Rd &&& & Street & &\\
Wainee[5] & Waine'e:& 0.14&1& 20 &Minor&400  &  $f(\rho) = \begin{cases}
20 \rho, \quad&0< \rho \leq 0.1\\
-2.469\rho^2+0.494\rho +1.975, \quad&  0.1< \rho \leq 1
\end{cases}$\\
& Lahainaluna Rd $\rightarrow$ Papalaua St&&& &Collector & &\\
Wainee[6] & Waine'e:& 0.16 &1& 20 &Minor&400  & $f(\rho) = \begin{cases}
20 \rho, \quad&0< \rho \leq 0.1\\
-2.469\rho^2+0.494\rho +1.975, \quad&  0.1< \rho \leq 1
\end{cases}$\\
& Papalaua St $\rightarrow$ Baker St &&& &Collector & &\\
Wainee[7] & Waine'e:& 0.16 &1& 20 &Minor&400  & $f(\rho) = \begin{cases}
20 \rho, \quad&0< \rho \leq 0.1\\
-2.469\rho^2+0.494\rho +1.975, \quad&  0.1< \rho \leq 1
\end{cases}$\\
& Baker St $\rightarrow$ Kenui St&&& &Collector & &\\
\hline
\end{tabular}}
\caption{Waine'e Street (Wainee) road data used for Lahaina.}
\label{table:Waineedata}
\end{table}

\begin{table}

\scalebox{0.75}{
\begin{tabular} {|l|l|l|l|l|l|l|l|}
\hline & &(mi) &  &(mi/hr) &  &(veh/hr/lane)   & (per 1 lane) \\
\# & Name & Length & Lanes & Speed Limit &  Road Class& $f_{max}$& Normalized Flux\\
\hline
Prison[0]&Prison:&0.16&1&20&Local&300& $f(\rho) = \begin{cases}
20 \rho, \quad&0< \rho \leq 0.075\\
-1.753\rho^2+0.263\rho +1.490, \quad&  0.075 < \rho \leq 1
\end{cases}$\\
&Front St$\rightarrow$ Wainee St&&&&Street&&\\
Prison[1]&Prison:&0.08&1&20&Local&300& $f(\rho) = \begin{cases}
20 \rho, \quad&0< \rho \leq 0.075\\
-1.753\rho^2+0.263\rho +1.490, \quad&  0.075 < \rho \leq 1
\end{cases}$\\
&Wainee St$\rightarrow$ Hwy-30&&&&Street&&\\
Dicken[0]&Dickenson:&0.05&1&20&Minor&400&$f(\rho) = \begin{cases}
20 \rho, \quad&0< \rho \leq 0.1\\
-2.469\rho^2+0.494\rho +1.975, \quad&  0.1< \rho \leq 1
\end{cases}$\\
&Front St $\rightarrow$Luakini St&&&&Collector&&\\
Dicken[1]&Dickenson:&0.09&1&20&Minor&400&$f(\rho) = \begin{cases}
20 \rho, \quad&0< \rho \leq 0.1\\
-2.469\rho^2+0.494\rho +1.975, \quad&  0.1< \rho \leq 1
\end{cases}$\\
&Luakini St$\rightarrow$ Wainee St&&&&Collector&&\\
Dicken[2]&Dickenson:&0.11&1&20&Minor&400&$f(\rho) = \begin{cases}
20 \rho, \quad&0< \rho \leq 0.1\\
-2.469\rho^2+0.494\rho +1.975, \quad&  0.1< \rho \leq 1
\end{cases}$\\
&Wainee St$\rightarrow$ Hwy-30&&&&Collector&&\\
Papal[0]&Papalaua:&0.15&1&20&Major&500& $f(\rho) = \begin{cases}
20 \rho, \quad&0< \rho \leq 0.125\\
-3.265\rho^2+0.816\rho+2.449, \quad& 0.125 < \rho \leq 1
\end{cases}$\\
&Front St$\rightarrow$ Wainee St&&&&Collector&&\\
Papal[1]&Papalaua:&0.07&1&20&Major&500& $f(\rho) = \begin{cases}
20 \rho, \quad&0< \rho \leq 0.125\\
-3.265\rho^2+0.816\rho+2.449, \quad& 0.125 < \rho \leq 1
\end{cases}$\\
&Wainee St$\rightarrow$ Hwy-30&&&&Collector&&\\
Kenui[0]&Kenui:&0.10&1&20&Minor&400&$f(\rho) = \begin{cases}
20 \rho, \quad&0< \rho \leq 0.1\\
-2.469\rho^2+0.494\rho +1.975, \quad&  0.1< \rho \leq 1
\end{cases}$\\
&Front St $\rightarrow$ Kahoma Vlg&&&&Collector&&\\
Kenui[1]&Kenui:&0.08&1&20&Minor&400&$f(\rho) = \begin{cases}
20 \rho, \quad&0< \rho \leq 0.1\\
-2.469\rho^2+0.494\rho +1.975, \quad&  0.1< \rho \leq 1
\end{cases}$\\
&Kahoma Vlg$\rightarrow$ Wainee St&&&&Collector&&\\
Kenui[2]&Kenui:&0.02&1&20&Minor&400&$f(\rho) = \begin{cases}
20 \rho, \quad&0< \rho \leq 0.1\\
-2.469\rho^2+0.494\rho +1.975, \quad&  0.1< \rho \leq 1
\end{cases}$\\
&Wainee St$\rightarrow$ Hwy-30&&&&Collector&&\\
\hline
\end{tabular}}
\caption{Residential Roads West of Hwy-30 road data used for Lahaina.}
\label{table:resroaddata}
\end{table}
\begin{table}
\scalebox{0.75}{
\begin{tabular} {|l|l|l|l|l|l|l|l|}
\hline & &(mi) &  &(mi/hr) &  &(veh/hr/lane)   & (per 1 lane) \\
\# & Name & Length & Lanes & Speed Limit &  Road Class& $f_{max}$& Normalized Flux\\
\hline
Keawe[0] & Keawe:& 0.10&2& 25 & Major&550  & $f(\rho) = \begin{cases}
25 \rho, \quad&0< \rho \leq 0.11\\
-3.472\rho^2+0.764\rho+2.708, \quad&  0.11< \rho \leq 1
\end{cases}$\\
& Hwy-30 $\rightarrow$ Gateway Shopping Ctr&&& &Collector & &\\
Keawe[1] & Keawe:& 0.09&2& 25 & Major &550  & $f(\rho) = \begin{cases}
25 \rho, \quad&0< \rho \leq 0.11\\
-3.472\rho^2+0.764\rho+2.708, \quad&  0.11< \rho \leq 1
\end{cases}$\\
& Gateway Shopping Ctr $\rightarrow$ Oil Rd &&& &Collector & &\\
LB[0] & Lahaina Bypass:& 0.01&1& 30 & Arterial/&650  & $f(\rho) = \begin{cases}
30 \rho, \quad&0< \rho \leq 0.108\\
-4.085\rho^2+0.885\rho + 3.202, \quad& 0.108< \rho \leq 1
\end{cases}$\\
& (source)$\rightarrow$ Lahainaluna Rd& (default) && &Parkway& &\\
LB[1] & Lahaina Bypass:& 1.06 &1& 30 & Arterial/&650  & $f(\rho) = \begin{cases}
30 \rho, \quad&0< \rho \leq 0.108\\
-4.085\rho^2+0.885\rho + 3.202, \quad& 0.108< \rho \leq 1
\end{cases}$\\
& Lahainaluna Rd $\rightarrow$ Oil Rd &&& &Parkway& &\\
\hline
\end{tabular}}
\caption{Keawe Street Extension (Keawe) and Lahaina Bypass (LB) road data used for Lahaina.}
\label{table:LBdata}
\end{table}

\begin{table}
\scalebox{0.65}{
\begin{tabular} {|l|l|l|l|l|l|l|l|}
\hline & &(mi) &  &(mi/hr) &  &(veh/hr/lane)   & (per 1 lane) \\
\# & Name & Length & Lanes & Speed Limit &  Road Class & $f_{max}$& Normalized Flux\\
\hline
LL[0] & Lahainaluna: & 0.14 & 1 & 20 & Major & 500 & $f(\rho) = \begin{cases}
20 \rho, \quad&0< \rho \leq 0.125\\
-3.265\rho^2+0.816\rho+2.449, \quad& 0.125 < \rho \leq 1
\end{cases}$\\
& Front St $\rightarrow$ Wainee St& && &Collector & &\\
LL[1] & Lahainaluna:& 0.09 & 1 & 20 & Major & 500 & $f(\rho) = \begin{cases}
20 \rho, \quad&0< \rho \leq 0.125\\
-3.265\rho^2+0.816\rho+2.449, \quad& 0.125 < \rho \leq 1
\end{cases}$\\
& Wainee St $\rightarrow$  Hwy-30 & && &Collector & &\\
LL[2] & Lahainaluna:& 0.14 & 1 & 20 & Major & 500 &$f(\rho) = \begin{cases}
20 \rho, \quad&0< \rho \leq 0.125\\
-3.265\rho^2+0.816\rho+2.449, \quad& 0.125 < \rho \leq 1
\end{cases}$\\
& Hwy-30 $\rightarrow$ Kuhua St & && &Collector & &\\
LL[3] & Lahainaluna: & 0.05 & 1 & 20 & Major& 500 & $f(\rho) = \begin{cases}
20 \rho, \quad&0< \rho \leq 0.125\\
-3.265\rho^2+0.816\rho+2.449, \quad& 0.125 < \rho \leq 1
\end{cases}$\\
& Kuhua St $\rightarrow$ Pauoa St & && &Collector & &\\
LL[4] & Lahainaluna: & 0.09 & 1 & 20 & Major& 500 & $f(\rho) = \begin{cases}
20 \rho, \quad&0< \rho \leq 0.125\\
-3.265\rho^2+0.816\rho+2.449, \quad& 0.125 < \rho \leq 1
\end{cases}$\\
& Pauoa St $\rightarrow$ Kale St& && &Collector & &\\
LL[5] & Lahainaluna:& 0.08 & 1 & 20 & Major & 500 & $f(\rho) = \begin{cases}
20 \rho, \quad&0< \rho \leq 0.125\\
-3.265\rho^2+0.816\rho+2.449, \quad& 0.125 < \rho \leq 1
\end{cases}$\\
& Kale St $\rightarrow$ Paunau St & && &Collector & &\\
LL[6] & Lahainaluna: & 0.06 & 1 & 20 & Major & 500 & $f(\rho) = \begin{cases}
20 \rho, \quad&0< \rho \leq 0.125\\
-3.265\rho^2+0.816\rho+2.449, \quad& 0.125 < \rho \leq 1
\end{cases}$\\
& Paunau St $\rightarrow$ Kelawea St& && &Collector & &\\
LL[7] & Lahainaluna:& 0.12 & 1 & 30 & Major& 600 & $f(\rho) = \begin{cases}
30 \rho, \quad&0< \rho \leq 0.1\\
-3.701\rho^2+0.741\rho+2.963, \quad&  0.1 < \rho \leq 1
\end{cases}$\\
&  Kelawea St $\rightarrow$ Kalena St & && &Collector & &\\
LL[8] & Lahainaluna:& 0.13 & 1 & 30 & Major& 600 & $f(\rho) = \begin{cases}
30 \rho, \quad&0< \rho \leq 0.1\\
-3.701\rho^2+0.741\rho+2.963, \quad&  0.1 < \rho \leq 1
\end{cases}$\\
& Kalena St $\rightarrow$ Dirt Road & && &Collector & &\\
LL[9] & Lahainaluna: & 0.03 & 1 & 30 & Major& 600 & $f(\rho) = \begin{cases}
30 \rho, \quad&0< \rho \leq 0.1\\
-3.701\rho^2+0.741\rho+2.963, \quad&  0.1 < \rho \leq 1
\end{cases}$\\
& Dirt Road $\rightarrow$ Lahaina Bypass& && &Collector & &\\
LL[10] & Lahainaluna:& 0.01 & 1 & 30 & Major & 600 & $f(\rho) = \begin{cases}
30 \rho, \quad&0< \rho \leq 0.1\\
-3.701\rho^2+0.741\rho+2.963, \quad&  0.1 < \rho \leq 1
\end{cases}$\\
& Lahaina Bypass $\rightarrow$ (source) &(default)&& &Collector & &\\
\hline
\end{tabular}}

\caption{LahainaLuna Road (LL) road data used for Lahaina.}
\label{table:LLdata}
\end{table}

\begin{table}
\scalebox{0.75}{
\begin{tabular} {|l|l|l|l|l|l|l|l|}
\hline & &(mi) &  &(mi/hr) &  &(veh/hr/lane)   & (per 1 lane) \\
\# & Name & Length & Lanes & Speed Limit &  Road Class & $f_{max}$& Normalized Flux\\
\hline
1&Kuhua:&0.28&1&20&Local&300&$f(\rho) = \begin{cases}
20 \rho, \quad&0< \rho \leq 0.075\\
-1.753\rho^2+0.263\rho +1.490, \quad&  0.075 < \rho \leq 1
\end{cases}$\\
&Lahainaluna Rd $\rightarrow$(source) &&& &Street& &\\
2&Komo Mai:&0.18&1&20&Local&300&$f(\rho) = \begin{cases}
20 \rho, \quad&0< \rho \leq 0.075\\
-1.753\rho^2+0.263\rho +1.490, \quad&  0.075 < \rho \leq 1
\end{cases}$\\
&(source)$\rightarrow$Keawe St Ext&&& &Street& &\\
3&Pauoa:&0.18&1&20&Local&300&$f(\rho) = \begin{cases}
20 \rho, \quad&0< \rho \leq 0.075\\
-1.753\rho^2+0.263\rho +1.490, \quad&  0.075 < \rho \leq 1
\end{cases}$\\
&Lahainaluna Rd $\rightarrow$(source) &&& &Street& &\\
4&Kale:&0.18&1&20&Local&300&$f(\rho) = \begin{cases}
20 \rho, \quad&0< \rho \leq 0.075\\
-1.753\rho^2+0.263\rho +1.490, \quad&  0.075 < \rho \leq 1
\end{cases}$\\
&Lahainaluna Rd $\rightarrow$(source) &&& &Street& &\\
5&Paunau:&0.18&1&20&Local&300&$f(\rho) = \begin{cases}
20 \rho, \quad&0< \rho \leq 0.075\\
-1.753\rho^2+0.263\rho +1.490, \quad&  0.075 < \rho \leq 1
\end{cases}$\\
&Lahainaluna Rd $\rightarrow$(source) &&& &Street& &\\
6&Kelawea:&0.15&1&20&Local&300&$f(\rho) = \begin{cases}
20 \rho, \quad&0< \rho \leq 0.075\\
-1.753\rho^2+0.263\rho +1.490, \quad&  0.075 < \rho \leq 1
\end{cases}$\\
&Lahainaluna Rd $\rightarrow$(source) &&& &Street& &\\
7&Kalena:&0.18&1&20&Local&300&$f(\rho) = \begin{cases}
20 \rho, \quad&0< \rho \leq 0.075\\
-1.753\rho^2+0.263\rho +1.490, \quad&  0.075 < \rho \leq 1
\end{cases}$\\
&Lahainaluna Rd $\rightarrow$(source) &&& &Street& &\\
8&Nondescript Dirt Road:&0.25&1&20&Local&300&$f(\rho) = \begin{cases}
20 \rho, \quad&0< \rho \leq 0.075\\
-1.753\rho^2+0.263\rho +1.490, \quad&  0.075 < \rho \leq 1
\end{cases}$\\
&Lahainaluna Rd $\rightarrow$(source) &&& &Street& &\\
\hline
\end{tabular}}
\caption{Source Roads with non-default lengths East of Hwy-30 road data used for Lahaina.}
\label{table:sourcedata}
\end{table}

\FloatBarrier
\section{Initial Density Data}\label{appendix:initdata}
Table \ref{table:allroads_initdata} includes road data used to estimate the initial density along each road segment.
\begin{table}[H]
\centering
\scalebox{0.7}{ 
\begin{tabular} {|l|l|l|l|l|l|l|} 
\hline  
Name & Segment & AADT (veh) & Google Data Available? (Y/N) & LOS & $v_0$ (mi/hr) & Normalized Initial Density (per 1 lane)\\ 
\hline
Highway 30 && 19,796 & Y & B & 24.5 &  $\rho_0 =0.178$\\ 
&(source) $\rightarrow$ Lahainaluna Rd &&&&&\\
& & 19,796 & Y & B & 28 &  $\rho_0 =0.178$\\  
&Lahainaluna Rd $\rightarrow$  Kenui St&&&&&\\
&&19,796&Y&C&20&$\rho_0 = 0.245$ \\ 
& Kenui St $\rightarrow$ Keawe St &&& &&\\
&& 37,300&Y & B & 28 & $\rho_0 = 0.178$\\
& Keawe St $\rightarrow$ (exit) &&&&&\\
\hline
Front Street && 6,060 & N & D & 8 & $\rho_0 = 0.300$\\
&(source) $\rightarrow$ Hwy-30 &&&&&\\
\hline
Waine'e Street && 0 & N & A & 20 & $\rho_0 = 0$\\
&(source) $\rightarrow$ Lahainaluna Rd &&&&&\\
&& 3,939 & N & A & 20 & $\rho_0 = 0.056$\\
&Lahainaluna Rd $\rightarrow$ Kenui St &&&&&\\
\hline 
Prison Street && 0 & N & A & 20 & $\rho_0 = 0$\\
&Front St $\rightarrow$ Hwy-30 &&&&&\\
\hline 
Dicckenson Street && 3,333 & N & A & 20 & $\rho_0 = 0.047$\\
&Front St $\rightarrow$ Hwy-30 &&&&&\\
\hline 
Papalaua Street && 3,434 & N & A & 20 & $\rho_0 = 0.049$\\
&Front St $\rightarrow$ Hwy-30 &&&&&\\
\hline 
Kenui Street && 2,652 & N & A & 20 & $\rho_0 = 0.038$\\
&Front St $\rightarrow$ Hwy-30 &&&&&\\
\hline
Keawe Street &&20,196&Y&C&12.5&$\rho_0 = 0.217$\\
&Hwy-30 $\rightarrow$ Gateway Shopping Ctr &&& &&\\
&& 20,196 & Y & B & 17.5 & $\rho_0 = 0.157$\\
&Gateway Shopping Ctr $\rightarrow$ Oil Rd&&& &&\\
&& 20,196 & Y & C & 12.5 & $\rho_0 = 0.217$\\
&Oil Rd $\rightarrow$ Lahaina Bypass&&& &&\\
\hline
Lahaina Bypass &&16,218 & Y & B & 21 & $\rho_0 = 0.154$\\
&(source) $\rightarrow$ Keawe St Ext&&&&&\\
\hline
LahainaLuna Road && 8,585 & N & C & 10 & $\rho_0 = 0.245$\\
&Front St $\rightarrow$ Wainee St&&&&&\\
&& 8,585 & Y & B & 14 & $\rho_0 = 0.178$\\
&Wainee St $\rightarrow$ Kelawea St&&&&&\\
&& 8,585 & Y & B & 21 & $\rho_0 = 0.143$\\
&Kelawea St $\rightarrow$ (source)&&&&&\\
\hline
\end{tabular}
}
\caption{Hwy-30/Honopiilani Highway (Hwy30) initial road data used for AM Base Network.}
\label{table:allroads_initdata}
\end{table}
\FloatBarrier
\end{landscape}
\end{document}